\documentclass[review]{elsarticle}
\usepackage{exscale,fancyhdr}
\usepackage{fixmath} 
\usepackage{latexsym,epsf,a4wide}
\usepackage{amsmath}
\usepackage{mathtools}
\usepackage{amssymb}
\usepackage{graphicx}
\usepackage{colordvi}
\usepackage{graphics,pstricks}
\usepackage{graphpap,epsfig,pifont,epic,eepic}
\usepackage{float}
\usepackage{array}

\newtheorem{theorem}{Theorem}[section]
\newtheorem{definition}[theorem]{Definition}
\newtheorem{assumption}{Assumption}

\newproof{proof}{Proof}

\newdefinition{remark}{Remark}

\newcommand{\eps}{\varepsilon}
\renewcommand*{\phi}{\varphi}
\newcommand{\IR}{\mathbb{R}}

\newcommand{\IN}{\mathbb{N}}

\newcommand{\neuezeile}{\hfill~}

\renewcommand{\emph}[1]{\textsl{#1}}

\frenchspacing 

\allowdisplaybreaks

\title{Rigorous Homogenization of a Stokes-Nernst-Planck-Poisson Problem for various Boundary Conditions}

\author[rvt]{N.~Ray\corref{cor1}}
\author[focal]{A.~Muntean}
\author[rvt]{P.~Knabner}
\cortext[cor1]{Corresponding author, Email: ray@am.uni-erlangen.de}

\address[rvt]{Department of Mathematics, Chair of Applied Mathematics I, Friedrich-Alexander University of Erlangen-Nuremberg, Martensstra\ss e 3, 91058 Erlangen, Germany, Email: ray@am.uni-erlangen.de, knabner@am.uni-erlangen.de }
\address[focal]{Center for Analysis, Scientific computing and Applications (CASA),
Institute for Complex Molecular Systems (ICMS), Department of
Mathematics and Computer Science, Technical University Eindhoven,
Eindhoven, The Netherlands, Email: a.muntean@tue.nl}

\begin{document}

\begin{abstract}
We perform the periodic homogenization (i.\,e.~$\eps\to 0$) of the
non-stationary Stokes-Nernst-Planck-Poisson system using two-scale
convergence, where $\eps$ is a suitable scale parameter. The
objective is to investigate the influence of \textsl{different boundary conditions and variable choices
of scalings in~$\eps$} of the microscopic system of partial
differential equations  on the structure of the (upscaled)
limit model equations. Due to the specific nonlinear coupling of the
underlying equations, special attention has to be paid when passing
to the limit in the electrostatic drift term. As a direct result of the
homogenization procedure, various classes of upscaled model
equations are obtained.
\end{abstract}

\begin{keyword}
Homogenization, Stokes-Nernst-Planck-Poisson system,
colloidal transport, porous media, two-scale convergence\\
\textit{AMS subject classification:} 35B27, 76M50, 76Sxx, 76Rxx, 76Wxx
\end{keyword}

\frontmatter
\maketitle

\section{Introduction}\label{SEC:Introduction}

This paper deals with with the periodic homogenization of a
non-stationary Stokes-Nernst-Planck-Poisson-type system~(SNPP). The real-world
applications that fit to this context include areas of colloid
chemistry, electro-hydrodynamics and semiconductor devices. Our interest lies in the
theoretical understanding of colloid enhanced contaminant transport in the soil. Colloidal particles are under consideration for quite a long time since they are very
important in multiple applications ranging from waste water
treatment, food industry, to printing, etc. The monograph of van de
Ven~\cite{VandeVen} and the books by Elimelech~\cite{Elimelech} and Hunter~\cite{Hunter} yield a well
founded description of colloidal particles and their properties.
However, the different processes determining the dynamics of
colloids within a heterogenous porous medium are not yet completely
understood. Therefore, the mathematically founded forecast of contaminant transport within soils is
still very difficult, as it is strongly influenced by the movement and distribution of colloidal particles (cf.
e.\,g.~\cite{Totsche04}).

Using mathematical homogenization theory, different kinds of coupled
models have been investi\-gated/de\-rived. Besides the combination of
fluid flow and convective-diffusive transport, the coupling among
different kinds of species by chemical reactions have been discussed
for example in~\cite{Hornung}, see also the references cited
therein. Further cross couplings of the water flow by heat, chemical
or electrostatical transport are studied formally in~\cite{Auriault94}. It is worth pointing out a totally different
context, where a nonlinear coupling quite  analogous to the one of
our problem occurs -- the phase-field models of Allen-Cahn type, see~\cite{Eck04} for more details on the modeling, analysis, and
averaging of such models. Investigations concerning variable scaling
and their influence on the limit equations is illustrated (by means
of formal two-scale asymptotic homogenization) in~\cite{Auriault95}, where different choices of ranges of the
P\'{e}clet number are considered. In the same spirit, but this time
rigorously, different scale ranges are examined for a linear
diffusion-reaction system with interfacial exchange in~\cite{Peter08}. Moreover, hybrid mixture theory has been applied
to swelling porous media with charged particles in~\cite{Bennethum02I} and~\cite{Bennethum02II}. Formal
upscaling attempts of the Nernst-Planck-Poisson system using formal
asymptotic expansion are reported, for instance, in~\cite{Auriault94},~\cite{Looker},~\cite{Moyne02} and~\cite{Moyne06}. It is worth pointing out that~\cite{Moyne02}
and~\cite{Moyne06} succeed to compute (again formally)
microstructure effects on the deforming, swelling clay. In spite of
such a good formal asymptotic understanding of the situation,
rigorous homogenization results seem to be lacking. Only recently, Schmuck published a paper concerning the rigorous upscaling
of a non-scaled Stokes-Nernst-Planck-Poisson system with transmission conditions for the electrostatic potential,~\cite{Schmuck11}. Furthermore, Allaire et al. studied the
stationary and linearized case in~\cite{Allaire10}. 
Our paper
contributes in this direction since we perform the rigorous
homogenization of the SNPP system
for different boundary conditions as well as for variable choices of scalings in~$\eps$, where~$\eps$ is a scale
parameter referring to a (periodically-distributed) microstructure.
The main focus of the paper thereby lies on the investigation of the
influence of the boundary condition and scalings in~$\eps$ on the structure of the effective
limit equations. This paper is built on~\cite{Ray}. However, we corrected essential errors concerning the use of Poicar\'{e}'s inequality. Furthermore, we introduce suitable redefinitions of the electrostatic potential in order to provide a more clearly arranged form of our homogenization results. Most important for the applications, we extend our results for different choices of boundary conditions for the electrostatic potential and include Stokes equations to our analysis in order to describe the interactions with the fluid flow.

The paper is organized in the following way: In
Section~\ref{SEC:MathMod}, we present the underlying microscopic
model equations -- the Stokes-Nernst-Planck-Poisson system. This is the
starting point of our investigations. The Nernst-Planck equations
describe the transport (diffusion, convection and electrostatic drift) of
and reaction between (number) densities of colloidal particles. The
electrostatic potential is given as a solution of Poisson's equation
with the charge density which is created by the colloidal particles
as forcing term. The fluid flow is determined by a modified Stokes equation. Basic results concerning existence and
uniqueness of weak solutions of this coupled system of partial
differential equations are stated in Theorem~\ref{THM:ExistenceUniqueness} in
Section~\ref{SEC:PoreScaleModel}. Moreover,
Section~\ref{SEC:PoreScaleModel} contains the definition of the
basic heterogenous and periodic geometric setting. The (small) scale
parameter~$\eps$ introduced here balances different physical terms
in the system of partial differential equations and plays a crucial
role in the homogenization procedure. Furthermore,~$\eps$ independent \emph{a priori} estimates are shown for both Neumann and Dirichlet boundary conditions of the electrostatic potential in Theorem~\ref{THM:UniformAPrioriEstimatesNeumann} and Theorem~\ref{THM:UniformAPrioriEstimatesDirichlet}. In
Section~\ref{SEC:TwoScaleConv}, we state the basic definitions and
well known compactness results concerning the method of two-scale
convergence. The main idea is to obtain an ``equivalent'' system of
partial differential equations that can reasonably describe the
effective macroscopic behavior of the considered phenomena.  We
achieve this by investigating rigorously the limit~$\eps\rightarrow
0$ using two-scale convergence. Our analysis focuses on the
influence of the choice of the boundary condition for the electrostatic potential and the different choices of scalings in~$\eps$ on both the
\emph{a priori} estimates and the structure of the limit problems.
The main calculations are included in Section~\ref{SEC:UpscalingNeumann}
and Section~\ref{SEC:UpscalingDirichlet}. The crucial point is the nonlinear
coupling of the system of partial differential equations by means of
the electrostatic potential, and therefore, the passage to the limit
$\eps\rightarrow 0$ in the nonlinear transport terms of the
Nernst-Planck equations and the Stokes equation. The main result (Theorems~\ref{THM:limitsPhiNeumann},~\ref{THM:limits V Neumann},~\ref{THM:limits c Neumann} and Theorems~\ref{THM:LimitsPhiDirichlet},~\ref{THM:limitsVDirichlet},~\ref{THM:limits c Dirichlet}) of the paper discuss for which choices of scaling we can pass
rigorously to the limit $\eps\rightarrow 0$. The results of this
homogenization procedure and the structure of the limit equations are emphasized in
Remarks~\ref{REM:limitsPhiN},~\ref{REM:limitsVN},~\ref{REM:limitsCN} and~\ref{REM:limitsPhiD},~\ref{REM:limitsVD},~\ref{REM:limitsCD} and in Section~\ref{SEC:Discussion}.

\section{The Underlying Physical Model}\label{SEC:MathMod}

We list in Table~\ref{TAB:Variablen} all variables and physical
parameters that are used in the following including
their dimensions. Thereby,~$L$ is a unit of length,~$T$ a unit of
time,~$M$ stands for a unit of mass,~$C$ for a unit of charge, while~$K$ represents the unit of temperature.
\begin{table}
\centering\fbox{ $
\begin{array}{llr}
v & [L/T] & \text{velocity}\\
p & [M/L/T^2]& \text{pressure}\\
\eta &  [M/L/T]& \text{kinematic viscosity of the fluid}\\
\rho &  [M/L^3]& \text{density of the fluid}\\
c &  [1/L^3]& \text{number density}\\
D &  [L^2/T]& \text{diffusivity}\\
\nu &  [-]& \text{outer unit normal}\\
\Phi &  [V]:=[ML^2/T^2/C]& \text{electrostatic potential}\\
\sigma & [ML/T^2/C] & \text{surface charge density}\\
z &  [-]& \text{charge number}\\
e &  [C]& \text{elementary charge}\\
\epsilon_0\epsilon_r & [C/V/L]& \text{dielectrostatic permittivity $\cdot$ relative permittivity}\\
k &  [ML^2/T^2/K]& \text{Boltzmann constant}\\
T &  [K]& \text{absolute temperature}
\end{array}
$}
\caption{List of the variables and physical parameters and their dimensions.}\label{TAB:Variablen}
\end{table}

In this section, we formulate a system of partial differential
equations describing colloid dynamics. Following e.\,g.~\cite{Elimelech} and~\cite{VandeVen}, we impose to our
system the balance of mass as well as the conservation of electrostatical
charges. Note that in most applications, colloidal particles are
charged~\cite{VandeVen}. Besides standard transport mechanisms
(convection and diffusion), a charged dispersion of colloidal
particles is also transported by the electrostatic field created by the
particles themselves as well as by the possibly charged soil matrix. Further interaction potentials (e.\,g.
van-der-Waals forces or an externally applied electrostatic
field) may also act on the colloidal particles. Throughout this
paper we neglect the latter effects and focus on the investigations
of the intrinsic electrostatic interaction. Following Chapter~3.3 in~\cite{VandeVen}, the positively~(+) and negatively~(-) charged particles are modeled in an Eulerian approach by some number density~$c^\pm$, which is transported by the total velocity~$v^\pm$ that
consists of two parts: First, the convective velocity term~$v^{\text{hydr}}$ due to the fluid flow within the porous medium in
which the colloidal particles are transported. This is the same for
all types of charge carriers. Second, the drift term~$v^{\text{drift},^\pm}$, that is different for both kinds of charge
carriers, can be calculated from the drift force~$F^{\text{drift},\pm}
= -z^\pm e\nabla\Phi$ via
\begin{equation*}
v^{\text{drift},\pm} = f^\pm F^{\text{drift},\pm} = -f^\pm z^\pm e\nabla\Phi
\end{equation*}
with proportionality coefficient~$f^\pm$ and an electrostatic interaction
potential~$\Phi$. In applications,~$f^\pm$ is sometimes also called
electrophoretic mobility and is related further to the diffusivity~$D^\pm$ by the Stokes-Einstein relation~$f^\pm=\frac{D^\pm}{kT}$,~\cite{VandeVen}. The total velocity~$v^\pm$ can therefore be
expressed by
\begin{equation*}
v^\pm = v^{\text{drift},\pm} + v^{\text{hydr}} =
-\frac{D^\pm z^\pm e}{kT}\nabla\Phi + v^{\text{hydr}}.
\end{equation*}
Inserting this expression into the standard
convection-diffusion-reaction equation for a number density~$c^\pm$
results in a modified transport equation which is also known as
Nernst-Planck equation. On the boundary~$\Gamma$ of the
considered domain~$\Omega$ we assume no-flux condition, which
supplements the so called ``no penetration'' model, described in~\cite{Elimelech}. Together with an appropriate choice of the initial
conditions~$c^{\pm,0}$, the transport of the charged particles can be
described properly by the following equations:
\begin{subequations}\label{EQU:UnScaledNernstPlanck}
\begin{align}
\partial_t c^\pm + \nabla\cdot\left(v^{\text{hydr}}c^\pm - D^\pm\nabla c^\pm - \frac{D^\pm z^\pm e}{kT}c^\pm\nabla\Phi\right) &= R^\pm\left(c\right) && \text{in } (0,T)\times\Omega \label{NernstPlanck},\\
\left(-v^{\text{hydr}}c^\pm + Dc^\pm\nabla c^\pm + \frac{D^\pm z^\pm e}{kT}c^\pm\nabla\Phi\right)\cdot\nu &= 0 && \text{on } (0,T)\times\Gamma\label{NernstPlanckRB},\\
c^\pm &= c^{\pm,0} && \text{in } \{t=0\}\times\Omega.\label{NernstPlanckAW}
\end{align}
\end{subequations}
with~$c := (c^+,c^-)$. The right-hand side~$R^\pm$
in the Nernst-Planck equation include chemical reactions between the particles, source terms et cetera.

The electrostatic interaction potential~$\Phi$ has to be calculated
using Poisson's equation~(\ref{Poisson}). The effect on the electrostatic
field implied by the charged particles themselves is included as
right-hand side. This equation may be supplemented by Neumann or Dirichlet boundary conditions which correspond to the surface charge and the so called~$\zeta$ potential of the solid matrix, respectively. Depending on the application in the geosciences either of the boundary conditions is given for example by measurements.
\begin{subequations}\label{EQU:UnScaledPoisson}
\begin{align}
-\Delta \Phi &= \frac{e}{\epsilon_0\epsilon_r}\left(z^+c^+ - z^-c^-\right) && \text{in } (0,T)\times\Omega\label{Poisson},\\
\nabla\Phi\cdot\nu &= \sigma && \text{on } (0,T)\times\Gamma_N\label{PoissonNeumannRB},\\
\Phi &= \Phi_D && \text{on } (0,T)\times\Gamma_D\label{PoissonDirichletRB}.
\end{align}
\end{subequations}
In order to determine the fluid velocity~$v^{\text{hydr}}$ we solve the modified Stokes' equations for incompressible fluid flow~(\ref{Stokes}),~\ref{StokesInkom}. As force term on the right hand side we take into account the drift force density. These equations are supplemented by a no slip boundary condition.
\begin{subequations}\label{EQU:UnScaledStokes}
\begin{align}
-\eta\Delta v^{\text{hydr}} + \frac{1}{\rho} \nabla p &= - \frac{e}{\rho}(z^+c^+ - z^-c^-)\nabla\phi^{\text{el}} && \text{in } (0,T)\times\Omega\label{Stokes}\\
\nabla\cdot v^{\text{hydr}} &=0 && \text{in } (0,T)\times\Omega\label{StokesInkom}\\
v^{\text{hydr}} &= 0 &&  \text{on } (0,T)\times\Gamma.\label{StokesRB}
\end{align}
\end{subequations}

\begin{remark}
(Part of) the system (\ref{EQU:UnScaledNernstPlanck}),
(\ref{EQU:UnScaledPoisson}), (\ref{EQU:UnScaledStokes}) arises in more general contexts. It
plays a role when determining ion distributions (for example around
colloidal particles or in a ion channel) and also in the framework
of semiconductor devices especially if the convective term is
neglected. We refer the reader to~\cite{Markovich},~\cite{Roubicek} for aspects on the modeling and analysis of the
semiconductor equations.
\end{remark}

\section{Pore Scale Model $P_\eps$}\label{SEC:PoreScaleModel}

In this section, we incorporate the physical processes described in
Section~\ref{SEC:MathMod} in a multi-scale framework and state basic properties of weak solutions as well as results concerning solvability of our problem. On the one hand, the
phenomena considered in Section~\ref{SEC:MathMod} take place on the
microscale and, on the other hand, the physical behavior we are
interested in occurs on a macroscopic domain. In the framework of
colloids, the transport takes place within the pore space of a
porous medium that is defined by its soil matrix. The definition of
the idealized underlying geometry which characterizes the highly
heterogenous porous structure is depicted in
Figure~\ref{FIG:Geometry}. The (small) scale parameter $\eps$ is
introduced to scale/balance the different terms in the governing
system of partial differential equations
(\ref{EQU:UnScaledNernstPlanck}), (\ref{EQU:UnScaledPoisson}) and (\ref{EQU:UnScaledStokes}).


\begin{figure}[h]
\centering
  \includegraphics[height=8cm]{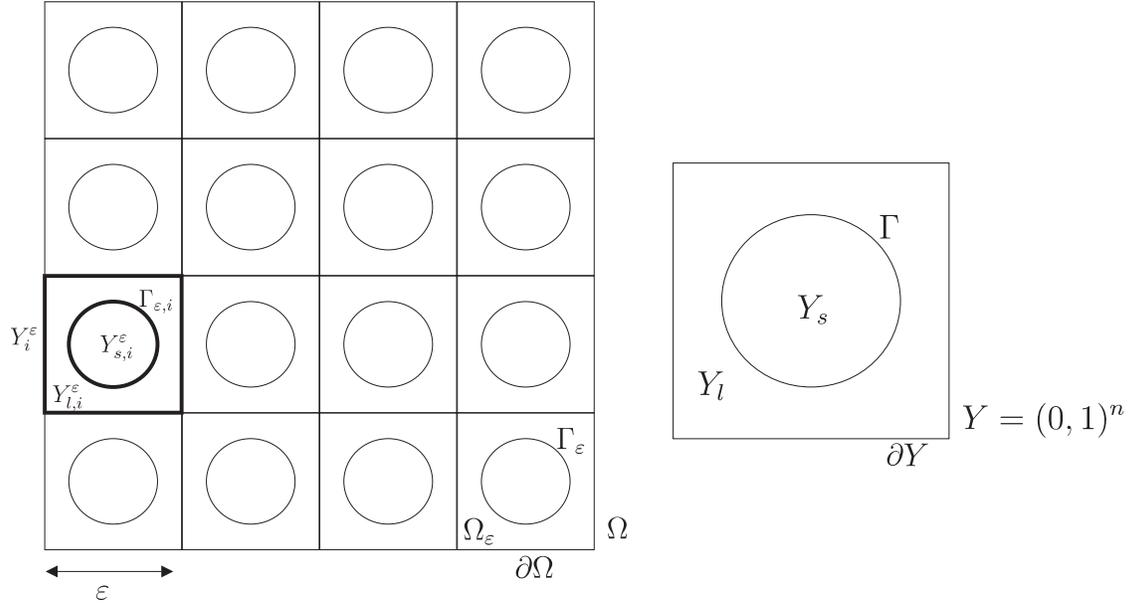}\\
  \caption{Standard unit cell (left) and periodic representation of a porous medium (right).}\label{FIG:Geometry}
\end{figure}

Let us consider a bounded and connected domain $\Omega\subset \IR^n,
\ n\in\IN$ with an associated periodic microstructure defined by the
unit cell $Y=\left(0,1\right)^n$. In the following we only consider
the physically meaningful space dimensions $n\in \{1,2,3\}$. The
unit cell $Y$ is made up of two open sets, see
Figure~\ref{FIG:Geometry}: The liquid part $Y_l$ and the solid part
$Y_s$ such that $\bar{Y}_l\cup \bar{Y}_s = \bar{Y}$ and $Y_l\cap Y_s
= \emptyset, \bar{Y}_l\cap \bar{Y}_s = \Gamma$. Especially, the solid part does not touch the boundary of the unit cell $Y$ and therefore the fluid part is connected.
We call $\eps <1$ the scale parameter and assume the macroscopic
domain to be covered by a regular mesh of size $\eps$ consisting of
$\eps$ scaled and shifted cells $Y_i^\eps$ that are divided into an
analogously scaled fluid part, solid part and boundary. Let us
denote these by $Y^\eps_{l,i}$, $Y^\eps_{s,i}$, and
$\Gamma_{\eps,i}$, respectively. The fluid part/pore space, the
solid part and the inner boundary of the porous medium are defined
by
\begin{align*}
\Omega_\eps := \bigcup_i Y^\eps_{l,i},\quad
\Omega \backslash\overline{\Omega}_\eps := \bigcup_i Y^\eps_{s,i},\quad \text{and}\quad
\Gamma_\eps := \bigcup_i \Gamma_{\eps,i}.
\end{align*}
Consequently, since we assume that $\Omega$ is completely covered by $\eps$-scaled unit cells $Y^\eps_i$ and, in particular, since the
solid part is not allowed to intersect the outer boundary, i.\,e. $\partial
\Omega\cap \Gamma_\eps = \emptyset$.


The objective of the paper is to rigorously investigate the limit $\eps\rightarrow 0$. The
focus thereby lies on the coupling between the colloidal transport, the fluid flow
and the electrostatic potential. We weight the different
terms in~(\ref{EQU:UnScaledNernstPlanck}),~(\ref{EQU:UnScaledPoisson}) and~(\ref{EQU:UnScaledStokes}) with the scale parameter~$\eps$ in order to derive reasonable macroscopic model equations. In the framework of colloids, a non-dimensionalization
procedure which can be used to motivate the choice of scaling has been done
for example in~\cite{VandeVen}. However, since the system~(\ref{EQU:UnScaledNernstPlanck}),~(\ref{EQU:UnScaledPoisson}) and~(\ref{EQU:UnScaledStokes}) is
used to describe various kinds of applications, different choices of
scaling may be interesting depending on the underlying physical
problem. We focus on the influence of the nonlinear coupling of the SNPP system due to the electrostatic potential and therefore regard Neumann as well as Dirichlet boundary condition for the Poisson equation and consider only the scaling of the coupling terms.
For the ease of presentation, we assume that~$D:=D^+=D^-$ and~$z:=z^+=-z^-$ and
suppress here the (constant) parameters~$\eta, \rho, z, e, k, T, D, \epsilon_r, \epsilon_0$ as well as the superscript~$^\text{hydr}$ within all the equations. The resulting system of
scaled partial differential equations is referred here as
Problem~$P_\eps$:
\begin{subequations}\label{EQU:ScaledSystem}
\begin{align}
- \eps^2\Delta v_\eps  + \nabla p_\eps &= - \eps^\beta (c_\eps^+ - c_\eps^-)\nabla \Phi_\eps && \text{in } (0,T)\times\Omega_\eps\label{ScaledStokes},\\
\nabla\cdot v_\eps &= 0 && \text{in } (0,T)\times\Omega_\eps\label{ScaledStokesIncomp},\\
v_\eps & = 0 && \text{on } (0,T)\times\left(\Gamma_\eps \cup \partial\Omega\right)\label{ScaledStokesRB},\\
-\eps^\alpha\Delta \Phi_\eps &= c_\eps^+ - c_\eps^- && \text{in } (0,T)\times\Omega_\eps\label{ScaledPoisson},\\
\eps^\alpha\nabla\Phi_\eps\cdot\nu &= \eps\sigma && \text{on } (0,T)\times\Gamma_{\eps,N} \label{ScaledPoissonNeumannRB},\\
\Phi_\eps &= \Phi_D && \text{on } (0,T)\times\Gamma_{\eps,D} \label{ScaledPoissonDirichletRB},\\
\eps^\alpha\nabla\Phi_\eps\cdot\nu &= 0 && \text{on } (0,T)\times\partial\Omega \label{ScaledPoissonOuterRB},\\
\partial_t c_\eps^\pm + \nabla\cdot\left(v_\eps c_\eps^\pm - \nabla c_\eps^\pm \mp \eps^\gamma c_\eps^\pm\nabla\Phi_\eps\right) &= R^\pm_\eps(c^+_\eps,c^-_\eps) && \text{in } \left(0,T\right)\times\Omega_\eps\label{ScaledNernstPlanck},\\
\left(- v_\eps c_\eps^\pm + \nabla c_\eps^\pm \pm \eps^\gamma c_\eps^\pm\nabla\Phi_\eps\right)\cdot\nu &=0 && \text{on } (0,T)\times\left(\Gamma_\eps \cup \partial\Omega\right)\label{ScaledNernstPlanckRB},\\
c_\eps^\pm &= c^{\pm,0} && \text{in } \{t=0\}\times\Omega_\eps\label{ScaledNernstPlanckAW}.
\end{align}
\end{subequations}
with the volume additivity constraint~$c^+_\eps - c^-_\eps = 1$ which is quite standard for the system~(\ref{EQU:ScaledSystem}), see e.g.~\cite{Roubicek}. This constraint can be relaxed in the case of Neumann boundary condition for the electrostatic potential, for the homogenous case see also~\cite{Schmuck}.

\begin{remark}
We could add a variable scaling also for the convective, diffusive and reactive
terms. However, we concentrate on the role of the electrostatic potential~$\Phi_\eps$.
The same
choice of scaling in the equations for~$c^\pm_\eps$ is especially
justified in the case that both types of particles have similar
properties except of the sign of the charge. On the outer boundary~$\partial\Omega$ we assume homogenous flux conditions for the concentration fields and the electrostatic potential as well as no slip boundary conditions for the velocity field. However,
different linear boundary conditions could be chosen instead without
notable changes in the calculations.
For a discussion on different
boundary conditions on the inner boundary and their influence on the
results of the homogenization procedure we refer to the discussions in Remark~\ref{REM:limitsPhiN},~\ref{REM:limitsVN},~\ref{REM:limitsCN} and~\ref{REM:limitsPhiD},~\ref{REM:limitsVD},~\ref{REM:limitsCD} and in Section~\ref{SEC:Discussion}.
\end{remark}
%
%
Multiplying the system of equations (\ref{EQU:ScaledSystem}) with the
test functions $\phi_1\in \left(H^1_0(\Omega_\eps)\right)^n,\phi_2,\phi_3,\psi\in H^1(\Omega_\eps)$ and
integrating by parts we get
the following weak formulation of Problem~$P_\eps$:
\begin{subequations}\label{EQU:weakformulation}
\begin{align}
\int_{\Omega_\eps} \eps^2\nabla v_\eps\cdot\nabla\varphi_1 -  p_\eps\nabla\cdot\varphi_1 dx = \int_{\Omega_\eps}-\eps^\beta (c^+_\eps - c^-_\eps)\nabla\Phi_\eps\cdot\varphi_1 dx\label{EQU:weakStokes}\\
\int_{\Omega_\eps} v_\eps\cdot\nabla\psi dx =0\label{EQU:weakStokesIncomp}\\
\int_{\Omega_\eps} \eps^\alpha\nabla\Phi_\eps\cdot\nabla\varphi_2 \,dx  - \int_{\Gamma_\eps} \eps^\alpha\nabla\Phi_\eps\cdot\nu\varphi_2 \,do_x = \int_{\Omega_\eps}\left(c_\eps^+ - c_\eps^-\right)\varphi_2 \,dx\label{EQU:weakPoisson},\\
\langle \partial_tc_\eps^\pm,\varphi_3\rangle_{(H^1)',H^1} +
\int_{\Omega_\eps}\!\! \left(-v_\eps c_\eps^\pm + \nabla c_\eps^\pm \pm \eps^\gamma
c_\eps^\pm\nabla\Phi_\eps\right)\cdot\nabla\varphi_3 \,dx
= \int_{\Omega_\eps} \!\! R^\pm_\eps(c^+_\eps,c^-_\eps) \varphi_3 \,dx\label{EQU:weakNernstPlanck}.
\end{align}
\end{subequations}
\begin{definition}\label{DEF:weaksolution}
We call $\left(v_\eps, p_\eps, \Phi_\eps, c_\eps^+, c_\eps^-\right)$ a weak solution of Problem
$P_ \eps$ if $v_\eps\in
L^\infty\left(0,T;H^1_0(\Omega_\eps)\right)$, $p_\eps\in
L^\infty\left(0,T;L^2(\Omega_\eps)\right)$, $\Phi_\eps\in
L^\infty\left(0,T;H^1(\Omega_\eps)\right)$ and $c^\pm_\eps \in L^\infty\left(0,T;L^2(\Omega_\eps)\right) \cap
L^2\left(0,T;H^1(\Omega_\eps)\right)$ with
$\partial_tc_\eps^\pm \in L^2\left(0,T;(H^1(\Omega_\eps))'\right)$ and
equations (\ref{EQU:weakformulation}) are satisfied for all test functions $\phi_1\in\left(H^1_0(\Omega_\eps)\right)^n, \phi_2,\phi_3,\psi\in H^1(\Omega_\eps)$.
\end{definition}


We modify the drift term in the Nernst-Planck equation by replacing the concentration fields~$c^\pm_\eps$ with the cut off functions~$\tilde{c}^\pm_\eps:= \max(0,c^\pm_\eps)$:
\begin{subequations}\label{EQU:ModifiedNernstPlanck}
\begin{align}
\partial_t c_\eps^\pm + \nabla\cdot\left(v_\eps c_\eps^\pm -  \nabla c_\eps^\pm \mp \eps^\gamma \tilde{c}_\eps^\pm\nabla\Phi_\eps\right) &= R^\pm_\eps(c^+_\eps,c^-_\eps) && \text{in } (0,T)\times\Omega_\eps\label{ModScaledNernstPlanck},\\
\left(- v_\eps c_\eps^\pm +  \nabla c_\eps^\pm \pm \eps^\gamma \tilde{c}_\eps^\pm\nabla\Phi_\eps\right)\cdot\nu &=0 && \text{in } (0,T)\times\left( \Gamma_\eps \cup \partial\Omega\right)\label{ModScaledNernstPlanckRB},\\
c_\eps^\pm &= c^{\pm,0} && \text{in } \{t=0\}\times\Omega_\eps\label{ModScaledNernstPlanckAW}.
\end{align}
\end{subequations}
The modified system consisting of~(\ref{EQU:ModifiedNernstPlanck}) and~(\ref{ScaledStokes})-(\ref{ScaledPoissonOuterRB}) is referred here as
Problem~$\tilde{P}_\eps$. The weak solution of Problem~$\tilde{P}_\eps$ is defined analogously to Definition~\ref{DEF:weaksolution}.

\begin{remark}
The weak solution of Problem~$\tilde{P}_\eps$ is also a weak solution of Problem~$P_\eps$. Furthermore, all non-negative weak solutions of Problem~$P_\eps$ are also weak solutions of Problem~$\tilde{P}_\eps$. As stated in Theorem~\ref{THM:ExistenceUniqueness} Problem~$P_\eps$ has a unique solution which is the non-negative one. Therefore both problems are equivalent.
\end{remark}


To be able to state a result on the existence and uniqueness of weak
solutions of Problem~$P_\eps$, we assume the following additional
restrictions for the ease of presentation. Especially item~2 and~4 can be relaxed. Note that, e.\,g., nonlinear
monotonic reaction terms can be handled using homogenization theory
as treated in~\cite{Hornung94}.

\begin{assumption}\label{ASS:data}\neuezeile
\begin{enumerate}
\item
On the geometry: We assume a perforated domain as introduced in Section~\ref{SEC:PoreScaleModel}, i.\,e. the pore space $\Omega_\eps$ is bounded, connected and has $C^{0,1}$-boundary.
\item On the rate coefficients: The reaction rates are assumed to have the following structure $R^\pm(c^+_\eps,c^-_\eps) = \mp(c^+_\eps - c^-_\eps)$. Especially, they are linear and employ conservation of mass for the concentration fields.
\item On the initial data: We assume the initial data to be non-negative and bounded independently of $\eps$, i.\,e.
\begin{align*}
0\leq c^{\pm,0}(x) \leq \Lambda \qquad \text{for all } x \in \Omega.
\end{align*}
Furthermore we assume the following compatibility condition for the initial data, i.e.
\begin{align*}
\int_{\Omega_\eps}c^{+,0} - c^{-,0} \,dx = \int_{\Gamma_\eps}\sigma do_x
\end{align*}
If $\sigma=0$ this implies global electro neutrality for the initial concentrations.
\item On the boundary data: We assume the boundary data $\sigma$ and $\Phi_D$ to be constant.
\end{enumerate}
\end{assumption}
In order to ensure unique weak solutions, we additionally require
\begin{assumption}\label{ASS:zeroMean}
If the electrostatic potential~$\Phi_\eps$ is determined via the equations~(\ref{ScaledPoisson}),~(\ref{ScaledPoissonNeumannRB}) and~(\ref{ScaledPoissonOuterRB}), we assume the potential~$\Phi_\eps$ to have zero mean value, i.\,e.~$\int_{\Omega_\eps}\Phi_\eps \,dx = 0$. Furthermore, we assume the pressure~$p_\eps$ to have zero mean value, i.\,e.~$\int_{\Omega_\eps}p_\eps \,dx = 0$.\\
\end{assumption}
%
%
\begin{theorem}\label{THM:MassConservation}
Let $\left(v_\eps, p_\eps,\Phi_\eps, c_\eps^+, c^-_\eps \right)$ be a weak solution of
Problem $P_\eps$ in the sense of Definition~\ref{DEF:weaksolution}. Let furthermore Assumption~\ref{ASS:data}
hold. Then the total mass $M=\int_{\Omega_\eps}c^+_\eps + c^-_\eps \,dx$ is conserved.
\end{theorem}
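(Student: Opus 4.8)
The plan is to exploit the divergence (conservation) structure of the Nernst-Planck equations by testing the weak formulation (\ref{EQU:weakNernstPlanck}) with the constant function $\varphi_3 \equiv 1$. Since $\Omega_\eps$ is bounded (Assumption~\ref{ASS:data}, item~1), the constant lies in $H^1(\Omega_\eps)$ and is thus admissible in the sense of Definition~\ref{DEF:weaksolution}. The key point is that the no-flux condition (\ref{ScaledNernstPlanckRB}) has already been absorbed into (\ref{EQU:weakNernstPlanck}) upon integration by parts, so no boundary contribution survives; inserting $\varphi_3\equiv 1$ then annihilates the entire flux integral because $\nabla\varphi_3 = 0$, leaving only
\begin{equation*}
\langle \partial_t c_\eps^\pm, 1\rangle_{(H^1)',H^1} = \int_{\Omega_\eps} R^\pm_\eps(c^+_\eps,c^-_\eps)\,dx.
\end{equation*}

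Next I would identify the left-hand side with the time derivative of the mass: because $c_\eps^\pm \in L^2(0,T;H^1(\Omega_\eps))$ with $\partial_t c_\eps^\pm \in L^2(0,T;(H^1(\Omega_\eps))')$, the standard Lions-Aubin embedding into $C([0,T];L^2(\Omega_\eps))$ guarantees that $t\mapsto \int_{\Omega_\eps}c_\eps^\pm\,dx$ is absolutely continuous with derivative $\langle \partial_t c_\eps^\pm, 1\rangle$. Invoking the prescribed reaction structure $R^\pm_\eps = \mp(c^+_\eps - c^-_\eps)$ from Assumption~\ref{ASS:data}, item~2, and adding the $+$ and $-$ equations, the right-hand sides cancel pointwise since $R^+_\eps + R^-_\eps \equiv 0$. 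This gives $\frac{d}{dt}\int_{\Omega_\eps}(c_\eps^+ + c_\eps^-)\,dx = 0$, and integrating from $0$ to $t$ against the initial data (\ref{ScaledNernstPlanckAW}) yields $M(t) = M(0)$, which is the asserted conservation.

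I do not expect a genuine obstacle here; the only steps requiring care are the admissibility of the constant test function (which is precisely why boundedness of $\Omega_\eps$ matters) and the justification of the pairing-to-derivative identity via the embedding into $C([0,T];L^2)$. Conceptually, the substance of the statement is simply that the chosen reaction rates conserve the \emph{sum} $c^+ + c^-$ — it is the cancellation $R^+ + R^- = 0$ that singles out the total mass as the conserved quantity, whereas the difference $c^+ - c^-$ is in general not conserved.
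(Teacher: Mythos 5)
Your proposal is correct and follows essentially the same route as the paper's own proof: testing the weak formulation~(\ref{EQU:weakNernstPlanck}) with $\varphi_3\equiv 1$, summing over $\pm$, and using the cancellation $R^+_\eps + R^-_\eps = 0$ from Assumption~\ref{ASS:data}. Your additional remarks on the admissibility of the constant test function and on identifying $\langle \partial_t c^\pm_\eps, 1\rangle$ with the derivative of the mass merely make explicit the standard justifications the paper leaves implicit.
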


\begin{proof}
We test the Nernst-Planck equations~(\ref{EQU:weakNernstPlanck}) with $\varphi_3=1$, sum over $\pm$ and insert the structure of the reaction rates according to Assumption~\ref{ASS:data} which directly gives the statement of Theorem~\ref{THM:MassConservation}.
\end{proof}


\begin{theorem}\label{THM:Positivity}
Let $\left(v_\eps, p_\eps, \Phi_\eps,c_\eps^+,c_\eps^-\right)$ be a weak solution of
Problem $\tilde{P}_\eps$. Let furthermore Assumption~\ref{ASS:data}
hold. Then the concentration fields are non-negative, i.e. are bounded from below uniformly in $\eps$.
\end{theorem}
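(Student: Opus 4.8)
The plan is to establish non-negativity by the classical energy method, testing each modified Nernst--Planck equation against the negative part of its own solution. For each sign introduce $w_\eps^\pm:=\min(0,c_\eps^\pm)\le 0$, the negative part of the corresponding concentration. Since $c_\eps^\pm\in L^2\!\left(0,T;H^1(\Omega_\eps)\right)$ with $\partial_t c_\eps^\pm\in L^2\!\left(0,T;(H^1(\Omega_\eps))'\right)$ by Definition~\ref{DEF:weaksolution}, and $s\mapsto\min(0,s)$ is Lipschitz, the functions $w_\eps^\pm$ lie in $L^2\!\left(0,T;H^1(\Omega_\eps)\right)$ and are admissible test functions. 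First I would insert $\varphi_3=w_\eps^\pm$ into the weak form of~(\ref{ModScaledNernstPlanck}); the standard chain rule in the Gelfand triple turns the time term into $\langle\partial_t c_\eps^\pm,w_\eps^\pm\rangle=\tfrac12\frac{d}{dt}\|w_\eps^\pm\|_{L^2(\Omega_\eps)}^2$.

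Next I would dispose of the flux terms. The diffusion term yields $\int_{\Omega_\eps}\nabla c_\eps^\pm\cdot\nabla w_\eps^\pm\,dx=\|\nabla w_\eps^\pm\|_{L^2(\Omega_\eps)}^2\ge 0$, since $\nabla w_\eps^\pm=\nabla c_\eps^\pm\,\mathbf{1}_{\{c_\eps^\pm<0\}}$. The convective contribution $-\int_{\Omega_\eps}v_\eps c_\eps^\pm\cdot\nabla w_\eps^\pm\,dx$, after restricting to $\{c_\eps^\pm<0\}$ where $c_\eps^\pm=w_\eps^\pm$, equals $-\tfrac12\int_{\Omega_\eps}v_\eps\cdot\nabla\big((w_\eps^\pm)^2\big)\,dx$; integrating by parts and invoking incompressibility~(\ref{ScaledStokesIncomp}) together with the no-slip condition~(\ref{ScaledStokesRB}) on $\Gamma_\eps\cup\partial\Omega$, this vanishes. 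The decisive point is the drift term $\pm\eps^\gamma\int_{\Omega_\eps}\tilde{c}_\eps^\pm\,\nabla\Phi_\eps\cdot\nabla w_\eps^\pm\,dx$: its integrand is supported on $\{c_\eps^\pm<0\}$, where by definition $\tilde{c}_\eps^\pm=\max(0,c_\eps^\pm)=0$, so it vanishes identically. This cancellation is exactly the purpose of passing to the cut-off problem $\tilde{P}_\eps$, and it is what allows the estimate to close without any control on $\nabla\Phi_\eps$.

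Summing the two resulting identities and writing $E(t):=\tfrac12\big(\|w_\eps^+\|_{L^2(\Omega_\eps)}^2+\|w_\eps^-\|_{L^2(\Omega_\eps)}^2\big)$, I am left with
\[
\frac{d}{dt}E(t)+\|\nabla w_\eps^+\|_{L^2(\Omega_\eps)}^2+\|\nabla w_\eps^-\|_{L^2(\Omega_\eps)}^2=\int_{\Omega_\eps}\big(R_\eps^+ w_\eps^+ + R_\eps^- w_\eps^-\big)\,dx.
\]
Inserting the reaction structure $R^\pm=\mp(c_\eps^+-c_\eps^-)$ from Assumption~\ref{ASS:data} and using $\int_{\Omega_\eps}c_\eps^\pm w_\eps^\pm\,dx=\|w_\eps^\pm\|_{L^2(\Omega_\eps)}^2$ together with Young's inequality on the mixed term $\int_{\Omega_\eps}w_\eps^+ w_\eps^-\,dx$, the right-hand side is bounded by $CE(t)$ (in fact it turns out to be non-positive). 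Since the initial data are non-negative by Assumption~\ref{ASS:data}, $w_\eps^\pm(0)=0$ and hence $E(0)=0$. Gronwall's lemma then forces $E\equiv0$, i.e.\ $w_\eps^\pm\equiv0$, which is precisely $c_\eps^\pm\ge0$; the lower bound $0$ is independent of $\eps$.

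The main technical obstacle is not the drift cancellation, which is built into the definition of $\tilde{P}_\eps$, but rather the rigorous justification of the time-derivative identity: one must verify that $\min(0,c_\eps^\pm)$ is admissible and that $\langle\partial_t c_\eps^\pm,w_\eps^\pm\rangle=\tfrac12\frac{d}{dt}\|w_\eps^\pm\|_{L^2(\Omega_\eps)}^2$, which relies on the Gelfand-triple regularity from Definition~\ref{DEF:weaksolution} and a standard density argument. A minor secondary point is the bookkeeping of the coupled reaction terms, but their linear, mass-conserving structure makes the combined contribution dissipative, so the Gronwall step is immediate.
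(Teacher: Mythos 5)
Your proposal is correct and follows essentially the same route as the paper: testing with $(c^\pm_\eps)_-=\min(0,c^\pm_\eps)$, cancelling the drift term via the cut-off $\tilde{c}^\pm_\eps$, eliminating the convective term through incompressibility and no-slip, and closing with Gronwall from non-negative initial data. The only cosmetic difference is that you absorb the reaction terms via the identity $\int c^\pm_\eps w^\pm_\eps = \|w^\pm_\eps\|^2$ and Young's inequality (which tacitly uses the same pointwise sign observation, $c^\mp_\eps w^\pm_\eps \le w^+_\eps w^-_\eps$), where the paper instead runs an explicit four-case analysis showing the reaction integrand $I_R\le 0$; and your writing $\|\nabla w^\pm_\eps\|^2$ for the diffusion term is actually more precise than the paper's $\|\nabla c^\pm_\eps\|^2$.
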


\begin{proof}
We test the Nernst-Planck equations~(\ref{EQU:weakNernstPlanck}) with $\varphi_3=(c^\pm_\eps)_- := \min (0,c^\pm_\eps)$ which yields
\begin{align*}
\int_{\Omega_\eps}\partial_tc^\pm_\eps (c^\pm_\eps)_- - v_\eps c^\pm_\eps\cdot\nabla (c^\pm_\eps)_- +  \nabla c^\pm_\eps\cdot\nabla (c^\pm_\eps)_- \pm\eps^\gamma \tilde{c}^\pm_\eps\nabla\Phi_\eps\cdot\nabla (c^\pm_\eps)_- \,dx = \int_{\Omega}R^\pm_\eps (c^\pm_\eps)_- \,dx.
\end{align*}
The drift term cancels directly due to the definition of the cut off function $\tilde{c}^\pm_\eps$. The velocity term cancels by standard calculations due to the incompressibility and no slip boundary condition. After summation over $\pm$, we have
\begin{align*}
\begin{multlined}[][0.9\linewidth]
\frac{1}{2}\frac{d}{dt} \left( \|(c^+_\eps)_-\|^2_{L^2(\Omega_\eps)} + \|(c^-_\eps)_-\|^2_{L^2(\Omega_\eps)} \right)
+   \left( \|\nabla c^+_\eps\|^2_{L^2(\Omega_\eps)} + \|\nabla c^-_\eps\|^2_{L^2(\Omega_\eps)} \right) \\
= \int_{\Omega} -(c^+_\eps - c^-_\eps) (c^+_\eps)_- + (c^+_\eps - c^-_\eps) (c^-_\eps)_- \,dx.
\end{multlined}
\end{align*}

We consider the reaction term $I_R:= -(c^+_\eps - c^-_\eps) (c^+_\eps)_- + (c^+_\eps - c^-_\eps) (c^-_\eps)_-$ for the following cases:
\begin{enumerate}
\item $c^+_\eps > 0, c^-_\eps > 0$:
$I_R = 0$
\item $c^+_\eps \leq 0, c^-_\eps > 0$:
$I_R = -(c^+_\eps - c^-_\eps)c^+_\eps \leq 0 $
\item $c^+_\eps > 0, c^-_\eps \leq 0$:
$
I_R = (c^+_\eps - c^-_\eps)c^-_\eps \leq 0
$
\item $c^+_\eps \leq 0, c^-_\eps \leq 0$:
$
I_R = -(c^+_\eps - c^-_\eps)c^+_\eps + (c^+_\eps - c^-_\eps)c^-_\eps = -(c^+_\eps - c^-_\eps)^2 \leq 0
$
\end{enumerate}

In any case we have the estimate $I_R \leq 0$ and therefore
\begin{align*}
\frac{1}{2}\frac{d}{dt} \left( \|(c^+_\eps)_-\|^2_{L^2(\Omega_\eps)} + \|(c^-_\eps)_-\|^2_{L^2(\Omega_\eps)} \right) +   \left( \|\nabla c^+_\eps\|^2_{L^2(\Omega_\eps)} + \|\nabla c^-_\eps\|^2_{L^2(\Omega_\eps)} \right) \leq 0
\end{align*}
Gronwall's lemma implies the statement of Theorem~\ref{THM:Positivity} since the initial concentrations are non-negative according to Assumption~\ref{ASS:data}.
\end{proof}


\begin{theorem}\label{THM:Boundedness}
Let $\left(v_\eps, p_\eps, \Phi_\eps, c_\eps^+, c_\eps^- \right)$ be a weak solution of
Problem $\tilde{P}_\eps$. Let furthermore Assumption~\ref{ASS:data}
hold. Then the concentration fields are bounded from above uniformly in $\eps$.
\end{theorem}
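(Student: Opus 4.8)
The plan is to run a truncation (Stampacchia-type) argument on the Nernst--Planck equations, testing with the excess over a level $K$ and summing over the two species so that the coupling terms can be controlled. Since Theorem~\ref{THM:Positivity} already gives $c^\pm_\eps\ge 0$, we have $\tilde c^\pm_\eps=c^\pm_\eps$, so the modified drift term coincides with the original one. Choosing $K:=\Lambda$ (the uniform initial bound from Assumption~\ref{ASS:data}), I would test the weak Nernst--Planck equation~(\ref{EQU:weakNernstPlanck}) for $c^\pm_\eps$ with $\varphi_3=w^\pm:=(c^\pm_\eps-K)_+\in L^2(0,T;H^1(\Omega_\eps))$. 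The time derivative produces $\tfrac12\frac{d}{dt}\|w^\pm\|_{L^2(\Omega_\eps)}^2$ by the chain rule in the Gelfand triple, the diffusion term gives $\|\nabla w^\pm\|_{L^2(\Omega_\eps)}^2$, and, exactly as in the proof of Theorem~\ref{THM:Positivity}, the convective term vanishes: writing $c^\pm_\eps=w^\pm+K$ on $\{c^\pm_\eps>K\}$ and using $\nabla\cdot v_\eps=0$ together with the no-slip condition~(\ref{ScaledStokesRB}), each contribution reduces to $\int_{\Omega_\eps}v_\eps\cdot\nabla(\cdots)\,dx=0$.

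Summing the two tested equations, the reaction terms combine favourably. With $R^\pm_\eps=\mp(c^+_\eps-c^-_\eps)$ from Assumption~\ref{ASS:data}, their joint contribution is $-\int_{\Omega_\eps}(c^+_\eps-c^-_\eps)(w^+-w^-)\,dx$, and since $t\mapsto(t-K)_+$ is nondecreasing the factors $c^+_\eps-c^-_\eps$ and $w^+-w^-$ always carry the same sign; hence this term is $\le 0$ and may be discarded.

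The crux is the electrostatic drift term $\eps^\gamma\int_{\Omega_\eps}c^+_\eps\nabla\Phi_\eps\cdot\nabla w^+\,dx-\eps^\gamma\int_{\Omega_\eps}c^-_\eps\nabla\Phi_\eps\cdot\nabla w^-\,dx$. Substituting again $c^\pm_\eps=w^\pm+K$, it splits into $\tfrac{\eps^\gamma}{2}\int\nabla\Phi_\eps\cdot\nabla[(w^+)^2-(w^-)^2]\,dx$ plus $\eps^\gamma K\int\nabla\Phi_\eps\cdot\nabla(w^+-w^-)\,dx$. I would feed both gradients into the weak Poisson equation~(\ref{EQU:weakPoisson}): testing~(\ref{EQU:weakPoisson}) with $\tfrac12[(w^+)^2-(w^-)^2]$ and with $K(w^+-w^-)$ converts each integral into $\eps^{-\alpha}\int_{\Omega_\eps}(c^+_\eps-c^-_\eps)(\cdots)\,dx$ plus a boundary contribution carrying the Poisson datum. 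A region-by-region inspection of the four cases in $\{c^+_\eps\gtrless K\}\times\{c^-_\eps\gtrless K\}$ shows that both resulting bulk integrals, $\int(c^+_\eps-c^-_\eps)[(w^+)^2-(w^-)^2]\,dx$ and $\int(c^+_\eps-c^-_\eps)(w^+-w^-)\,dx$, are nonnegative; consequently the whole bulk part of the drift term has a good sign and can be kept on the left-hand side.

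What remains — and what I expect to be the main obstacle — are the boundary integrals over $\Gamma_\eps$ produced when invoking Poisson: the terms proportional to $\eps\sigma\int_{\Gamma_\eps}(\cdots)\,do_x$ in the Neumann case~(\ref{ScaledPoissonNeumannRB}), and the a priori unknown co-normal trace $\eps^\alpha\nabla\Phi_\eps\cdot\nu$ in the Dirichlet case~(\ref{ScaledPoissonDirichletRB}). For homogeneous data these vanish and one gets $\frac{d}{dt}(\|w^+\|^2+\|w^-\|^2)\le 0$, so that $w^\pm\equiv 0$ (since $w^\pm(0)=0$ for $K=\Lambda$) and hence $c^\pm_\eps\le\Lambda$ uniformly in $\eps$. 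In the presence of $\sigma$, these boundary terms must be estimated by a trace inequality on the perforated domain with $\eps$-explicit constants and absorbed into $\|\nabla w^\pm\|^2$ (using the scaling assumptions on $\alpha,\gamma$ exploited later), leaving a Gronwall inequality $\frac{d}{dt}(\|w^+\|^2+\|w^-\|^2)\le C(\|w^+\|^2+\|w^-\|^2)$ with $C$ independent of $\eps$, which yields the claimed uniform upper bound. Guaranteeing that the trace constant is genuinely $\eps$-independent, and treating the Dirichlet co-normal term without access to $\partial_\nu\Phi_\eps$, are the delicate points.
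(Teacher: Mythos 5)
Your proposal reconstructs, in essentially identical structure, the \emph{auxiliary} maximum-principle computation that the paper carries out (test with $(c^\pm_\eps-\Lambda)_+$, kill the convective term by incompressibility and no-slip, convert the drift term via Poisson's equation, and verify case by case that the resulting bulk drift contribution is nonnegative and the reaction contribution nonpositive — your two sign inspections match the paper's $T_D\geq 0$ and $T_R\leq 0$ exactly). However, you overlook that the paper's \emph{actual} proof of Theorem~\ref{THM:Boundedness} is a one-liner: by Theorem~\ref{THM:Positivity} the concentrations are non-negative, and the volume additivity constraint $c^+_\eps+c^-_\eps=1$ imposed together with Problem~$P_\eps$ then yields $0\leq c^\pm_\eps\leq 1$ immediately, uniformly in $\eps$ and for \emph{both} boundary conditions. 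This is why the theorem holds in the stated generality at all.

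The genuine gap is in your last paragraph. For $\sigma\neq 0$, the boundary term produced by invoking~(\ref{EQU:weakPoisson}) contains the piece $\eps^{\gamma-\alpha}\,\eps\,\sigma\,\Lambda\int_{\Gamma_\eps}(w^+-w^-)\,do_x$, which is \emph{linear} in $w^\pm:=(c^\pm_\eps-\Lambda)_+$. Any trace inequality of the form $\eps\|\phi\|^2_{L^2(\Gamma_\eps)}\leq C\bigl(\|\phi\|^2_{L^2(\Omega_\eps)}+\eps^2\|\nabla\phi\|^2_{L^2(\Omega_\eps)}\bigr)$ combined with Young's inequality then inevitably leaves an additive constant, so the best differential inequality you can reach is $\frac{d}{dt}\bigl(\|w^+\|^2+\|w^-\|^2\bigr)\leq C\bigl(\|w^+\|^2+\|w^-\|^2\bigr)+C$, and Gronwall gives only an $L^2$ bound on the excess, not $w^\pm\equiv 0$; indeed $c^\pm_\eps\leq\Lambda$ is not to be expected when a nonzero surface charge drives the system, and a single truncation level cannot produce an $L^\infty$ bound — one needs a level iteration (Stampacchia/De~Giorgi) or Moser's technique, which is precisely what the paper cites (Lemma~3.3.6 in \cite{Schmuck}, and \cite{Lieberman}) for the Neumann case without the additivity constraint. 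Consistently, the paper's own explicit computation is performed only for \emph{homogeneous} Neumann data, exactly so that no boundary terms occur. In the Dirichlet case your worry is well founded but unresolved by your sketch: $w^\pm$ does not vanish on $\Gamma_\eps$, so the unknown co-normal trace $\eps^\alpha\nabla\Phi_\eps\cdot\nu$ cannot be discarded or estimated within this argument; the paper covers that case only through the constraint. A further technical flaw worth flagging (shared by the paper's formal integration by parts): using $\tfrac12\bigl[(w^+)^2-(w^-)^2\bigr]$ as a test function in~(\ref{EQU:weakPoisson}) requires $(w^\pm)^2\in H^1(\Omega_\eps)$, which is not known at this stage — from $w^\pm\in H^1(\Omega_\eps)$ one only gets $(w^\pm)^2\in W^{1,3/2}(\Omega_\eps)$ for $n\leq 3$ — so this step needs an approximation or bootstrapping argument to be rigorous.
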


\begin{proof}
The statement of Theorem~\ref{THM:Boundedness} follows directly from Theorem~\ref{THM:Positivity} combined with the volume additivity constraint $c^+_\eps + c^-_\eps = 1$.
The boundedness of the concentration fields $c^\pm_\eps$ can be proven in the case of Neumann boundary conditions for the electrostatic potential without the volume additivity constraint, for the homogenous case see Lemma 3.3.6. in \cite{Schmuck} where Moser's iteration technique is applied formally. This formal proof can directly be extended to non-homogenous boundary conditions and linear reaction rates as defined in Assumption~\ref{ASS:data} and also be made rigorous. A rigorous approach using Moser's iteration can be found for general nonlinear equation in \cite{Lieberman}. However, an alternative and more straight forward way is to show that a maximum principle applies in the case of homogenous Neumann boundary conditions for the electrostatic potential. Since the solutions $c^\pm_\eps$ of Problem $\tilde{P}_\eps$ are non negative, $\tilde{c}^\pm_\eps$ can be replaced by $c^\pm_\eps$ in the the Nernst-Planck equations. Using $\varphi_3 = (c^\pm_\eps - \Lambda)_+ := \max (0, c^\pm_\eps - \Lambda)$ as test function, we obtain
\begin{align*}
\int_{\Omega_\eps}\partial_tc^\pm_\eps (c^\pm_\eps - \Lambda)_+ - v_\eps c^\pm_\eps\cdot\nabla (c^\pm_\eps - \Lambda)_+ +  \nabla c^\pm_\eps\cdot\nabla (c^\pm_\eps - \Lambda)_+ \pm \eps^\gamma c^\pm_\eps\nabla\Phi_\eps\cdot\nabla (c^\pm_\eps - \Lambda)_+ \,dx\\
 = \int_{\Omega_\eps}R^\pm_\eps (c^\pm_\eps - \Lambda)_+ \,dx
\end{align*}
The velocity term cancels by standard calculations due to the incompressibility and no slip boundary condition and it remains
\begin{align*}
\begin{multlined}[][0.9\linewidth]
\frac{1}{2}\frac{d}{dt}\|(c^\pm_\eps - \Lambda)_+\|^2_{L^2(\Omega_\eps)} +   \|\nabla (c^\pm_\eps - \Lambda)_+\|^2_{L^2(\Omega_\eps)} \\
\pm\eps^\gamma
\int_{\Omega_\eps} (c^\pm_\eps - \Lambda)\nabla\Phi_\eps\cdot\nabla (c^\pm_\eps - \Lambda)_+
+ \Lambda \nabla\Phi_\eps\cdot\nabla(c^\pm_\eps - \Lambda)_+ \,dx
= \int_{\Omega}R^\pm_\eps (c^\pm_\eps - \Lambda)_+ \,dx
\end{multlined}
\end{align*}
We consider the drift term separately. Using the identity $(c^\pm_\eps - \Lambda)\nabla\Phi_\eps\cdot\nabla (c^\pm_\eps - \Lambda)_+ = \nabla\Phi_\eps\cdot\frac{1}{2}\nabla (c^\pm_\eps - \Lambda)^2_+$ and integration by parts, leads to
\begin{align*}
\pm \eps^\gamma\int_{\Omega_\eps} \frac{1}{2}(-\Delta\Phi_\eps) (c^\pm_\eps - \Lambda)^2_+ + \Lambda \eps^\gamma (-\Delta\Phi_\eps)(c^\pm_\eps - \Lambda)_+ \,dx
\end{align*}
Here the homogenous Neumann boundary condition for the electrostatic potential prevents the occurrence of boundary terms.
Summation over $\pm$ and inserting the Poisson equation leads to
\begin{align*}
\int_{\Omega_\eps} \eps^\gamma \Big( (c^+_\eps - c^-_\eps) \tfrac{1}{2} (c^+_\eps - \Lambda)^2_+ - (c^+_\eps - c^-_\eps) \tfrac{1}{2}(c^-_\eps - \Lambda)^2_+
                     + \Lambda (c^+_\eps - c^-_\eps)(c^+_\eps - \Lambda)_+\vspace*{-0.5em}\\
                     - \Lambda (c^+_\eps - c^-_\eps)(c^-_\eps - \Lambda)_+\Big)\,dx
=: \int_{\Omega_\eps}T_D \,dx
\end{align*}
We now distinguish the following cases:
\begin{enumerate}
\item $c^+_\eps < \Lambda, c^-_\eps < \Lambda$: $T_D=0$
\item $c^+_\eps \geq \Lambda, c^-_\eps < \Lambda$:
$
T_D = (c^+_\eps - c^-_\eps) \frac{1}{2} (c^+_\eps - \Lambda)^2 + \Lambda (c^+_\eps - c^-_\eps)(c^+_\eps - \Lambda)  \geq 0
$
\item $c^+_\eps < \Lambda, c^-_\eps \geq \Lambda$:
$
T_D = -(c^+_\eps - c^-_\eps) \frac{1}{2} (c^-_\eps - \Lambda)^2 - \Lambda (c^+_\eps - c^-_\eps)(c^-_\eps - \Lambda)  \geq 0
$
\item $c^+_\eps \geq \Lambda, c^-_\eps \geq \Lambda$:
\begin{align*}
\begin{multlined}[]
T_D
= \frac{1}{2} (c^+_\eps - \Lambda)^3 - \frac{1}{2}(c^+_\eps - \Lambda) (c^-_\eps - \Lambda)^2 - \frac{1}{2}(c^-_\eps - \Lambda) (c^+_\eps - \Lambda)^2  + \frac{1}{2} (c^+_\eps - \Lambda)^3\\
 + \Lambda (c^+_\eps - c^-_\eps)^2 \geq 0
\end{multlined}
\end{align*}
Here we used the identity $(c^+_\eps - c^-_\eps) = (c^+_\eps - \Lambda) - (c^-_\eps - \Lambda)$ and applied Young's inequality $(3,3/2)$ which leads to a cancelation of all but the last term.
\end{enumerate}
We now consider the reaction term $T_R:= -(c^+_\eps - c^-_\eps) (c^+_\eps - \Lambda)_+ + (c^+_\eps - c^-_\eps) (c^-_\eps - \Lambda)_+$ for the following cases:
\begin{enumerate}
\item $c^+_\eps < \Lambda, c^-_\eps < \Lambda$: $T_R=0$
\item $c^+_\eps \geq \Lambda, c^-_\eps < \Lambda$:
$
T_R = -(c^+_\eps - c^-_\eps)(c^+_\eps - \Lambda) \leq 0
$
\item $c^+_\eps < \Lambda, c^-_\eps \geq \Lambda$:
$
T_R = (c^+_\eps - c^-_\eps)(c^-_\eps - \Lambda) \leq 0
$
\item $c^+_\eps \geq \Lambda, c^-_\eps \geq \Lambda$:
$
T_R = -(c^+_\eps - c^-_\eps)(c^+_\eps - \Lambda) + -(c^+_\eps - c^-_\eps)(c^-_\eps - \Lambda) = -(c^+_\eps - c^-_\eps)^2 \leq 0
$
\end{enumerate}
Finally, since $T_D\geq 0$ and $T_R \leq 0$ we have
\begin{align*}
\begin{multlined}[]
\frac{1}{2}\frac{d}{dt}\left( \|(c^+_\eps - \Lambda)_+\|^2_{L^2(\Omega_\eps)} + \|(c^-_\eps - \Lambda)_+\|^2_{L^2(\Omega_\eps)} \right)\\
+   \left( \|\nabla (c^+_\eps - \Lambda)_+\|^2_{L^2(\Omega_\eps)} + \|\nabla (c^-_\eps - \Lambda)_+\|^2_{L^2(\Omega_\eps)} \right)
\leq 0
\end{multlined}
\end{align*}
Gronwall's lemma implies the statement of Theorem~\ref{THM:Boundedness} since the initial concentrations are bounded from above by $\Lambda$ according to Assumption~\ref{ASS:data}.
\end{proof}



In the following Theorem we state \emph{a priori} estimates that are valid if we assume Neumann boundary data for the electrostatic potential on $\Gamma_\eps$. This corresponds to a physical problem in which the surface charge of the porous medium is prescribed.

\begin{theorem}\label{THM:UniformAPrioriEstimatesNeumann}
Let Assumption~\ref{ASS:data} and~\ref{ASS:zeroMean} be valid. The following \emph{a priori} estimates hold in the case of pure Neumann boundary conditions for the electrostatic potential:
\begin{align}\label{EQU:APrioriPhiNeumann}
\eps^\alpha\|\Phi_\eps\|_{L^2\left((0,T)\times\Omega_\eps\right)} +
\eps^\alpha\|\nabla\Phi_\eps\|_{L^2\left((0,T)\times\Omega_\eps\right)} &\leq C.
\end{align}
In the case $\beta-\alpha\geq 0$, it holds
\begin{align}\label{EQU:APrioriVNeumann}
\|v_\eps\|_{L^2\left((0,T)\times\Omega_\eps\right)} +
\eps\|\nabla v_\eps\|_{L^2\left((0,T)\times\Omega_\eps\right)} &\leq C.
\end{align}
If additionally $\gamma - \alpha \geq 0$ is fulfilled, it holds
\begin{align}\label{EQU:APrioriConcentrationNeumann}
\begin{multlined}[][0.9\linewidth]
\max_{0\leq t\leq T}\|c^-_\eps\|_{L^2(\Omega_\eps)} + \max_{0\leq
t\leq T}\|c^+_\eps\|_{L^2(\Omega_\eps)} + \|\nabla
c^-_\eps\|_{L^2\left((0,T)\times\Omega_\eps\right)} +  \|\nabla
c^+_\eps\|_{L^2\left((0,T)\times\Omega_\eps\right)}\\
+ \|\partial_t c^+_\eps\|_{L^2\left(0,T;(H^1(\Omega_\eps))'\right)} +
\|\partial_t c^-_\eps\|_{L^2\left(0,T;(H^1(\Omega_\eps))'\right)}
\leq C,
\end{multlined}
\end{align}
In (\ref{EQU:APrioriPhiNeumann}), (\ref{EQU:APrioriVNeumann}) and (\ref{EQU:APrioriConcentrationNeumann}), $C\in \IR_+$ is a constant independent of $\eps$.
\end{theorem}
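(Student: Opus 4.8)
The plan is to establish the three bounds in the order in which they are coupled: first the potential (which needs only the already-available $L^\infty$-control of $c_\eps^\pm$), then the velocity (which feeds on the potential bound), and finally the concentrations (which need both). Throughout I would use that, by Theorems~\ref{THM:Positivity} and~\ref{THM:Boundedness}, $0\le c_\eps^\pm\le\Lambda$ uniformly in $\eps$, so $c_\eps^+-c_\eps^-$ is bounded in $L^\infty((0,T)\times\Omega_\eps)$ independently of $\eps$.

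\emph{Potential.} I would test the weak Poisson equation~(\ref{EQU:weakPoisson}) with $\varphi_2=\Phi_\eps$ and insert the Neumann datum $\eps^\alpha\nabla\Phi_\eps\cdot\nu=\eps\sigma$ on $\Gamma_\eps$, turning the boundary term into $\int_{\Gamma_\eps}\eps\sigma\Phi_\eps\,do_x$ and yielding
\begin{align*}
\eps^\alpha\|\nabla\Phi_\eps\|_{L^2(\Omega_\eps)}^2 = \int_{\Omega_\eps}(c_\eps^+-c_\eps^-)\Phi_\eps\,dx + \int_{\Gamma_\eps}\eps\sigma\Phi_\eps\,do_x.
\end{align*}
The volume term is bounded by $C\|\Phi_\eps\|_{L^2(\Omega_\eps)}$ via the $L^\infty$-bound on the concentrations. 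Because of Assumption~\ref{ASS:zeroMean} ($\int_{\Omega_\eps}\Phi_\eps\,dx=0$), I would invoke a Poincar\'e--Wirtinger inequality on the connected perforated domain with an $\eps$-independent constant to replace $\|\Phi_\eps\|_{L^2(\Omega_\eps)}$ by $C\|\nabla\Phi_\eps\|_{L^2(\Omega_\eps)}$. For the surface term I would combine $|\Gamma_\eps|\sim\eps^{-1}$ with the scaled trace inequality $\|\Phi_\eps\|_{L^2(\Gamma_\eps)}\le C(\eps^{-1/2}\|\Phi_\eps\|_{L^2(\Omega_\eps)}+\eps^{1/2}\|\nabla\Phi_\eps\|_{L^2(\Omega_\eps)})$, so that the prefactor $\eps$ and the surface measure exactly compensate and the boundary contribution is again controlled by $C\|\nabla\Phi_\eps\|_{L^2(\Omega_\eps)}$. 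Collecting terms gives $\eps^\alpha\|\nabla\Phi_\eps\|^2\le C\|\nabla\Phi_\eps\|$, hence $\eps^\alpha\|\nabla\Phi_\eps\|_{L^2(\Omega_\eps)}\le C$, and $\eps^\alpha\|\Phi_\eps\|_{L^2(\Omega_\eps)}\le C$ follows again by Poincar\'e; integrating over $(0,T)$ yields~(\ref{EQU:APrioriPhiNeumann}).

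\emph{Velocity.} I would test the weak Stokes equation~(\ref{EQU:weakStokes}) with $\varphi_1=v_\eps$; by incompressibility~(\ref{EQU:weakStokesIncomp}) and the no-slip condition the pressure term drops, leaving $\eps^2\|\nabla v_\eps\|^2=-\eps^\beta\int_{\Omega_\eps}(c_\eps^+-c_\eps^-)\nabla\Phi_\eps\cdot v_\eps$. Bounding the right-hand side by $C\eps^\beta\|\nabla\Phi_\eps\|_{L^2}\|v_\eps\|_{L^2}$ and inserting the just-proven $\|\nabla\Phi_\eps\|\le C\eps^{-\alpha}$ produces a factor $\eps^{\beta-\alpha}$. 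Since $v_\eps$ vanishes on all of $\Gamma_\eps\cup\partial\Omega$, the Poincar\'e inequality on the perforated domain now carries a factor $\eps$, i.e.\ $\|v_\eps\|_{L^2}\le C\eps\|\nabla v_\eps\|_{L^2}$. This gives $\eps^2\|\nabla v_\eps\|^2\le C\eps^{\beta-\alpha+1}\|\nabla v_\eps\|$, whence $\eps\|\nabla v_\eps\|\le C\eps^{\beta-\alpha}$ and $\|v_\eps\|\le C\eps^{\beta-\alpha}$, both bounded precisely when $\beta-\alpha\ge0$; integration in time yields~(\ref{EQU:APrioriVNeumann}). For the concentrations, the max-in-time $L^2$-bounds are immediate from $0\le c_\eps^\pm\le\Lambda$. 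For the gradient bound I would test~(\ref{EQU:weakNernstPlanck}) with $\varphi_3=c_\eps^\pm$; the convective term vanishes by incompressibility and no-slip, the reaction term is controlled by the $L^\infty$-bound, and the drift term $\mp\eps^\gamma\int c_\eps^\pm\nabla\Phi_\eps\cdot\nabla c_\eps^\pm$ is estimated by $C\eps^{\gamma-\alpha}\|\nabla c_\eps^\pm\|$ and split by Young's inequality, absorbing $\tfrac12\|\nabla c_\eps^\pm\|^2$ on the left and leaving $C\eps^{2(\gamma-\alpha)}$, bounded exactly when $\gamma-\alpha\ge0$; Gronwall then gives the $L^2((0,T);H^1)$-bound, and the dual bound on $\partial_tc_\eps^\pm$ follows by testing with arbitrary $\varphi_3\in H^1(\Omega_\eps)$ and estimating each flux term through the norms already controlled.

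\emph{Main obstacle.} The crux — and the point where the correction relative to~\cite{Ray} enters — is to track the exact $\eps$-dependence of the functional inequalities on the perforated domain: the $\eps$-independent Poincar\'e--Wirtinger constant for the zero-mean potential, the factor-$\eps$ Poincar\'e constant for the velocity vanishing on $\Gamma_\eps$, and the scaled trace inequality on $\Gamma_\eps$. A wrong power of $\eps$ in any of these would destroy the cancellation that renders the surface-charge term harmless and that pins down the thresholds $\beta-\alpha\ge0$ and $\gamma-\alpha\ge0$.
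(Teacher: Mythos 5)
Your proposal is correct and follows essentially the same route as the paper's own proof: testing Poisson's equation with $\Phi_\eps$ combined with the zero-mean Poincar\'e inequality and the $\eps$-scaled trace inequality on $\Gamma_\eps$ (cf.~\cite{Hornung91}), testing Stokes with $v_\eps$ using the factor-$\eps$ Poincar\'e inequality for functions vanishing on the boundary, and an energy estimate with Young absorption plus a duality argument for the Nernst--Planck part. The $\eps$-dependencies of the functional inequalities that you single out as the crux are precisely the ones the paper tracks to obtain the thresholds $\beta-\alpha\geq 0$ and $\gamma-\alpha\geq 0$.
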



\begin{proof}
To derive the \emph{a priori} estimates we test
(\ref{EQU:weakPoisson}) with the potential $\Phi_\eps$ which
leads to
\begin{align*}
\begin{multlined}[][0.9\linewidth]
\eps^\alpha\|\nabla\Phi_\eps\|^2_{L^2(\Omega_\eps)}
\leq \eps \|\sigma\|_{L^2(\Gamma_\eps)}\|\Phi_\eps\|_{L^2(\Gamma_\eps)} + \|c^+_\eps
- c^-_\eps\|_{L^2(\Omega_\eps)}\|\Phi_\eps\|_{L^2(\Omega_\eps)} \\
\leq \sqrt{\eps} \|\sigma\|_{L^2(\Gamma_\eps)} C\left( \|\Phi_\eps\|_{L^2(\Omega_\eps)} + \eps\|\nabla\Phi_\eps\|_{L^2(\Omega_\eps)} \right) + \|c^+_\eps
- c^-_\eps\|_{L^2(\Omega_\eps)}\|\nabla\Phi_\eps\|_{L^2(\Omega_\eps)} \\
 \leq C\left( \|\sigma\|_{L^2(\Gamma_\eps)} + \|c^+_\eps
- c^-_\eps\|_{L^2(\Omega_\eps)}\right) \|\nabla\Phi_\eps\|_{L^2(\Omega_\eps)}.
\end{multlined}
\end{align*}
Here we used $\eps\|\Phi_\eps\|^2_{L^2(\Gamma_\eps)} \leq C\left( \|\Phi_\eps\|^2_{L^2(\Omega_\eps)} + \eps^2\|\nabla\Phi_\eps\|^2_{L^2(\Omega_\eps)} \right)$ with some constant $C$ independent of $\eps$, see \cite{Hornung91} Lemma 3, Poincare's inequality for functions with zero mean value (cf.~\ref{ASS:zeroMean}) and $\eps < 1$. This results in
\begin{align*}
\eps^\alpha\|\nabla\Phi_\eps\|_{L^2(\Omega_\eps)}
& \leq C\left( \|\sigma\|_{L^2(\Gamma_\eps)} + \|c^+_\eps
- c^-_\eps\|_{L^2(\Omega_\eps)}\right)  \leq C,
\end{align*}
since $\sigma$ is constant and the concentration fields
$c^\pm_\eps$ are bounded uniformly in $\eps$, see Theorem~\ref{THM:Boundedness}. Using once again Poincar\'{e}'s
inequality leads directly to statement (\ref{EQU:APrioriPhiNeumann}) after integration with respect to time. The constant $C$ remains bounded $\eps$-independently due to Theorem~\ref{THM:Boundedness} and Assumption~\ref{ASS:data}.

We test~(\ref{EQU:weakStokes}) with the velocity field $v_\eps$ and apply
Poincar\'{e}'s inequality for functions with zero boundary values, i.\,e.~
$\|\phi_\eps\|_2\leq C_P\eps\|\nabla\phi_\eps\|_2$ with some constant
$C_P$ independent of $\eps$, see \cite{Hornung}, page 52. This
leads due to the incompressibility of $v_\eps$ and the $\eps$-independent boundedness of $c^\pm_\eps$ according to Theorem~\ref{THM:Boundedness} to
\begin{align*}
\begin{multlined}[][0.9\linewidth]
\eps^2\|\nabla v_\eps\|^2_{L^2(\Omega_\eps)}
\leq \eps^\beta  2\Lambda \|\nabla\Phi_\eps\|_{L^2(\Omega_\eps)}\|v_\eps\|_{L^2(\Omega_\eps)}
\leq \eps^\beta  C \|\nabla\Phi_\eps\|_{L^2(\Omega_\eps)}\eps\|\nabla v_\eps\|_{L^2(\Omega_\eps)}
\end{multlined}
\end{align*}
This results in
\begin{align*}
\eps\|\nabla v_\eps\|_{L^2(\Omega_\eps)}
& \leq \eps^\beta  C \|\nabla\Phi_\eps\|_{L^2(\Omega_\eps)} \leq C,
\end{align*}
if $\beta - \alpha \geq 0$, since the right hand side is bounded independently of $\eps$ due to the estimates derived for the electrostatic potential. Using once again Poincar\'{e}'s
inequality leads directly to statement (\ref{EQU:APrioriVNeumann}) after integration with respect to time and the constant $C$ remains bounded $\eps$-independently.

In Theorem~\ref{THM:Boundedness} we have already shown that $c^+_\eps$
and $c^-_\eps$ are bounded by $\Lambda$ uniformly in $\eps$. We test the Nernst-Planck equation~(\ref{EQU:weakNernstPlanck}) with $\varphi_3 = c^\pm_\eps$ to obtain an energy estimate. This allows to bound also the gradient of the concentration fields.
\begin{align*}
\begin{multlined}[][0.9\linewidth]
 \frac{1}{2}\frac{d}{dt}\|c^\pm_\eps\|^2_{L^{2}(\Omega_\eps)} +  \|\nabla c^\pm_\eps\|^2_{L^{2}(\Omega_\eps)}\\
\leq \int_{\Omega_\eps}\left| \eps^\gamma c^\pm_\eps\nabla\Phi_\eps\cdot\nabla c^\pm_\eps \right| \,dx + \int_{\Omega_\eps} R^\pm_\eps c^\pm_\eps \,dx
\leq \Lambda\eps^\gamma \|\nabla\Phi_\eps\|_{L^2(\Omega_\eps)}\|\nabla c^\pm_\eps\|_{L^2(\Omega_\eps)} + \int_{\Omega_\eps} R^\pm_\eps c^\pm_\eps \,dx\\
\leq \eps^{2\gamma - 2\alpha} C_\delta\left(\|\sigma\|^2_{L^2(\Gamma_\eps)} + \|c^+_\eps - c^-_\eps\|^2_{L^2(\Omega_\eps)}\right) + \delta \|\nabla c^\pm_\eps\|^2_{L^2(\Omega_\eps)} + \int_{\Omega_\eps} R^\pm_\eps c^\pm_\eps \,dx
\end{multlined}
\end{align*}
Here we used the estimate for the electrostatic potential derived above and that the velocity term cancels due to incompressibility of the fluid and the no slip boundary condition and Young's inequality. Summation over $\pm$, sorption with $\delta < 1/2$ and estimation of the reaction terms via $-(c^+_\eps - c^-_\eps) c^+_\eps + (c^+_\eps - c^-_\eps)c^-_\eps \leq -(c^+_\eps - c^-_\eps)^2 \leq 0$ finally leads to
\begin{align*}
\begin{multlined}[][0.9\linewidth]
\frac{1}{2}\frac{d}{dt}\left( \|c^+_\eps\|^2_{L^{2}(\Omega_\eps)} + \|c^-_\eps\|^2_{L^{2}(\Omega_\eps)} \right)
+ \frac{1}{2} \left( \|\nabla c^+_\eps\|^2_{L^{2}(\Omega_\eps)} + \|\nabla c^-_\eps\|^2_{L^{2}(\Omega_\eps)} \right)\\
\leq \eps^{2\gamma - 2\alpha} C_\delta\left(\|\sigma\|^2_{L^2(\Gamma_\eps)} + \|c^+_\eps\|^2_{L^{2}(\Omega_\eps)} + \|c^-_\eps\|^2_{L^{2}(\Omega_\eps)}\right)
\end{multlined}
\end{align*}
Integration with respect to time gives an uniform estimate of the gradient if~$\gamma - \alpha \geq 0$ since~$\sigma$ is constant and the concentration fields are bounded independently of~$\eps$.

To conclude the proof of Theorem~\ref{THM:UniformAPrioriEstimatesNeumann}, we still need to
derive estimates for the time derivatives $\partial_tc^\pm_\eps$
of the concentration fields. By the definition of the $(H^1)'$ norm and
by equations (\ref{EQU:weakNernstPlanck}), we obtain
\begin{align*}
\begin{multlined}[][0.9\linewidth]
\|\partial_t c^\pm_\eps\|_{(H^1(\Omega_\eps))'}
=  \sup_{\phi\in H^1(\Omega_\eps),\|\phi\|_{H^1(\Omega_\eps)}\leq 1} \langle \partial_t c^\pm_\eps,\phi \rangle_{(H^1)',H^1}\\
\leq \sup_{\phi\in H^1(\Omega_\eps),\|\phi\|_{H^1(\Omega_\eps)}\leq 1} \left( \left(\|c^+_\eps - c^-_\eps\|_{L^2(\Omega_\eps)} +
\Lambda\|v_\eps - \eps^\gamma
\nabla\Phi_\eps\|_{L^2(\Omega_\eps)} +  \|\nabla c^\pm_\eps\|_{L^2(\Omega_\eps)}\right) \|\phi\|_{H^1(\Omega_\eps)}\right)\\
\leq \| c^+_\eps\|_{L^2(\Omega_\eps)} + \|c^-_\eps\|_{L^2(\Omega_\eps)} +
\Lambda\|v_\eps\|_{L^2(\Omega_\eps)} + \Lambda \eps^{\gamma-\alpha}\eps^\alpha\|\nabla\Phi_\eps\|_{L^2(\Omega_\eps)} +  \|\nabla
c^\pm_\eps\|_{L^2(\Omega_\eps)}
\leq C,
\end{multlined}
\end{align*}
if $\gamma - \alpha \geq 0$ due to the uniform estimates for the gradient of the
concentration and the potential derived above, respectively. Integration with respect to time therefore yields the last statement of
Theorem~\ref{THM:UniformAPrioriEstimatesNeumann}.
\end{proof}


In the following Theorem we state \emph{a priori} estimates that are valid if we assume Dirichlet boundary data for the electrostatic potential on $\Gamma_\eps$. This corresponds to a physical problem in which the surface potential of the porous medium is prescribed. In application in the geosciences this boundary condition is related to the specification of the so called $\zeta$ potential. We define the transformed electrostatic potential $\Phi^{\text{hom}}_\eps := \Phi_\eps - \Phi_D$. Since $\Phi_D$ is a constant according to Assumption~\ref{ASS:data}, $\Phi^{\text{hom}}_\eps$ fulfills the following set of equations:
\begin{subequations}\label{EQU:TranslatedPotential}
\begin{align}
-\eps^\alpha\Delta \Phi^{\text{hom}}_\eps &= \left(c_\eps^+ - c_\eps^-\right) && \text{in } (0,T)\times\Omega_\eps\label{EQU:TranslatedPotentialEqu},
\\
\Phi^{\text{hom}}_\eps &= 0 && \text{in } (0,T)\times\Gamma_\eps,
\\
\eps^\alpha\nabla\Phi^{\text{hom}}_\eps\cdot\nu &= 0 && \text{in } (0,T)\times\partial\Omega.
\end{align}
\end{subequations}

\begin{theorem}\label{THM:UniformAPrioriEstimatesDirichlet}
Let Assumption~\ref{ASS:data} be valid. The following \emph{a priori} estimates hold in the case of Dirichlet boundary conditions on $\Gamma_\eps$ for the electrostatic potential
\begin{align}\label{EQU:APrioriTranslatedPhiDirichlet}
\eps^{\alpha-2} \|\Phi^{\text{hom}}_\eps\|_{L^2\left((0,T)\times\Omega_\eps\right)} +
\eps^{\alpha-1}\|\nabla\Phi^{\text{hom}}_\eps\|_{L^2\left((0,T)\times\Omega_\eps\right)} &\leq C.
\end{align}
In the case $\beta - \alpha + 1\geq 0$, it holds
\begin{align}\label{EQU:APrioriVDirichlet}
\|v_\eps\|_{L^2\left((0,T)\times\Omega_\eps\right)} +
\eps\|\nabla v_\eps\|_{L^2\left((0,T)\times\Omega_\eps\right)} &\leq C.
\end{align}
In the case $\gamma - \alpha + 1 \geq 0$, it holds
\begin{align}\label{EQU:APrioriConcentrationDirichlet}
\begin{multlined}[][0.9\linewidth]
\max_{0\leq t\leq T}\|c^-_\eps\|_{L^2(\Omega_\eps)} + \max_{0\leq
t\leq T}\|c^+_\eps\|_{L^2(\Omega_\eps)} + \|\nabla
c^-_\eps\|_{L^2\left((0,T)\times\Omega_\eps\right)} +  \|\nabla
c^+_\eps\|_{L^2\left((0,T)\times\Omega_\eps\right)}\\
+
\|\partial_t c^+_\eps\|_{L^2\left(0,T;(H^1(\Omega_\eps))'\right)} +
\|\partial_t c^-_\eps\|_{L^2\left(0,T;(H^1(\Omega_\eps))'\right)}
\leq C.
\end{multlined}
\end{align}
In (\ref{EQU:APrioriTranslatedPhiDirichlet}), (\ref{EQU:APrioriVDirichlet}) and (\ref{EQU:APrioriConcentrationDirichlet}), $C\in \IR_+$ is a constant independent of $\eps$.
\end{theorem}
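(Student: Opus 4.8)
The plan is to mirror the structure of the proof of Theorem~\ref{THM:UniformAPrioriEstimatesNeumann}, the one genuinely new feature being that the translated potential $\Phi^{\text{hom}}_\eps$ now vanishes on the interior boundary $\Gamma_\eps$. First I would establish the potential estimate~(\ref{EQU:APrioriTranslatedPhiDirichlet}) by testing the weak form of the Poisson equation~(\ref{EQU:weakPoisson}), written for $\Phi^{\text{hom}}_\eps$, with the test function $\Phi^{\text{hom}}_\eps$ itself. Because $\Phi^{\text{hom}}_\eps = 0$ on $\Gamma_\eps$ and $\eps^\alpha\nabla\Phi^{\text{hom}}_\eps\cdot\nu = 0$ on $\partial\Omega$, both boundary contributions vanish, leaving $\eps^\alpha\|\nabla\Phi^{\text{hom}}_\eps\|^2_{L^2(\Omega_\eps)} \leq \|c^+_\eps - c^-_\eps\|_{L^2(\Omega_\eps)}\|\Phi^{\text{hom}}_\eps\|_{L^2(\Omega_\eps)}$. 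The decisive point, in contrast to the Neumann case, is that $\Phi^{\text{hom}}_\eps$ has zero trace on $\Gamma_\eps$, so the Poincar\'{e} inequality with zero boundary values $\|\Phi^{\text{hom}}_\eps\|_{L^2(\Omega_\eps)} \leq C_P\eps\|\nabla\Phi^{\text{hom}}_\eps\|_{L^2(\Omega_\eps)}$ (cf.~\cite{Hornung}, page 52) applies and supplies an extra factor $\eps$. Combined with the $\eps$-independent bound on $c^\pm_\eps$ from Theorem~\ref{THM:Boundedness}, this yields $\|\nabla\Phi^{\text{hom}}_\eps\|_{L^2(\Omega_\eps)} \leq C\eps^{1-\alpha}$ and, after a second application of Poincar\'{e}, $\|\Phi^{\text{hom}}_\eps\|_{L^2(\Omega_\eps)} \leq C\eps^{2-\alpha}$; integrating in time gives~(\ref{EQU:APrioriTranslatedPhiDirichlet}).

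For~(\ref{EQU:APrioriVDirichlet}) I would test the weak Stokes equation~(\ref{EQU:weakStokes}) with $v_\eps$. Since $\nabla\Phi_\eps = \nabla\Phi^{\text{hom}}_\eps$ ($\Phi_D$ being constant) and the pressure term drops by incompressibility~(\ref{EQU:weakStokesIncomp}), I obtain $\eps^2\|\nabla v_\eps\|^2_{L^2(\Omega_\eps)} \leq 2\Lambda\eps^\beta\|\nabla\Phi^{\text{hom}}_\eps\|_{L^2(\Omega_\eps)}\|v_\eps\|_{L^2(\Omega_\eps)}$. Using the $\eps$-scaled Poincar\'{e} inequality for $v_\eps$ (which vanishes on $\Gamma_\eps\cup\partial\Omega$) and dividing gives $\eps\|\nabla v_\eps\|_{L^2(\Omega_\eps)} \leq C\eps^\beta\|\nabla\Phi^{\text{hom}}_\eps\|_{L^2(\Omega_\eps)} \leq C\eps^{\beta-\alpha+1}$, which is $\eps$-uniformly bounded precisely when $\beta-\alpha+1 \geq 0$; a further use of Poincar\'{e} and integration in time then deliver the full statement.

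Finally, for~(\ref{EQU:APrioriConcentrationDirichlet}) I would test the Nernst--Planck equation~(\ref{EQU:weakNernstPlanck}) with $\varphi_3 = c^\pm_\eps$. The convective term vanishes by incompressibility and the no-slip condition, and the reaction contributions sum to something nonpositive exactly as in the Neumann proof. The drift term is controlled by $\Lambda\eps^\gamma\|\nabla\Phi^{\text{hom}}_\eps\|_{L^2(\Omega_\eps)}\|\nabla c^\pm_\eps\|_{L^2(\Omega_\eps)} \leq C\eps^{\gamma-\alpha+1}\|\nabla c^\pm_\eps\|_{L^2(\Omega_\eps)}$, to which Young's inequality with a small parameter $\delta$ is applied; the resulting factor $\eps^{2(\gamma-\alpha+1)}$ stays bounded as long as $\gamma-\alpha+1 \geq 0$ (recall $\eps<1$). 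Absorbing the gradient term on the left, summing over $\pm$, and a direct integration in time, together with Theorem~\ref{THM:Boundedness}, bound $\|\nabla c^\pm_\eps\|_{L^2}$ and the maxima of the $L^2$-norms. The time-derivative estimate follows from the duality characterization of the $(H^1(\Omega_\eps))'$ norm on~(\ref{EQU:weakNernstPlanck}), where the only term needing care, $\eps^\gamma\|\nabla\Phi_\eps\|_{L^2} = \eps^{\gamma-\alpha+1}\,\eps^{\alpha-1}\|\nabla\Phi^{\text{hom}}_\eps\|_{L^2}$, is uniformly bounded under the same condition. The main obstacle, and the real new content relative to the Neumann case, lies entirely in the first step: recognizing that the homogeneous Dirichlet datum on $\Gamma_\eps$ licenses the $\eps$-scaled Poincar\'{e} inequality rather than the zero-mean one, which is exactly what shifts every admissible scaling window by $+1$.
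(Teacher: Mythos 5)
Your proposal is correct and takes essentially the same route as the paper: the paper likewise obtains~(\ref{EQU:APrioriTranslatedPhiDirichlet}) by testing~(\ref{EQU:TranslatedPotentialEqu}) with $\Phi^{\text{hom}}_\eps$ and exploiting the zero-trace, $\eps$-scaled Poincar\'{e} inequality (the key shift from the zero-mean version used in the Neumann case), and then reduces the velocity and concentration estimates to the argument of Theorem~\ref{THM:UniformAPrioriEstimatesNeumann} via the observation $\eps^{\alpha-1}\|\nabla\Phi_\eps\|_{L^2(\Omega_\eps)} = \eps^{\alpha-1}\|\nabla\Phi^{\text{hom}}_\eps\|_{L^2(\Omega_\eps)}\leq C$. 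Your explicit Stokes and Nernst--Planck steps, with the scaling windows shifted by $+1$, simply spell out what the paper compresses into ``follows analogously''.
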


\begin{proof}
We test equation~(\ref{EQU:TranslatedPotentialEqu}) with the translated potential $\Phi^{\text{hom}}_\eps$ and use Poincar\'{e}'s inequality for zero boundary data, see \cite{Hornung}. This leads to
\begin{align*}
\eps^\alpha\|\nabla\Phi^{\text{hom}}_\eps\|^2_{L^2(\Omega_\eps)}
\leq \|c_\eps^+ - c_\eps^-\|_{L^2(\Omega_\eps)}\|\Phi^{\text{hom}}_\eps\|_{L^2(\Omega_\eps)}
\leq \|c_\eps^+ - c_\eps^-\|_{L^2(\Omega_\eps)}\eps C_P\|\nabla\Phi^{\text{hom}}_\eps\|_{L^2(\Omega_\eps)},
\end{align*}
which results in
\begin{align*}
\eps^{\alpha-1}\|\nabla\Phi^{\text{hom}}_\eps\|_{L^2(\Omega_\eps)}
\leq C_P\|c_\eps^+ - c_\eps^-\|_{L^2(\Omega_\eps)}.
\leq C
\end{align*}
Here we have used the boundedness of the concentration fields $c^\pm_\eps$ provided by Theorem~\ref{THM:Boundedness} with $C$ being a constant
independent of $\eps$. Using again Poincar\'{e}'s inequality leads to
\begin{align*}
\eps^{\alpha-2}\|\Phi^{\text{hom}}_\eps\|_{L^2(\Omega_\eps)} \leq C
\end{align*}
Altogether, we obtain the statement~(\ref{EQU:APrioriTranslatedPhiDirichlet}) directly after integration with respect to time. By means of Theorem~\ref{THM:Boundedness}, the constant $C$ remains bounded $\eps$-independently.

The rest of the statement in Theorem~\ref{THM:UniformAPrioriEstimatesDirichlet} follows analogously to the proof of Theorem~\ref{THM:UniformAPrioriEstimatesNeumann} since due to the definition of the translated electrostatic potential and Theorem~\ref{THM:UniformAPrioriEstimatesDirichlet}, it holds $\eps^{\alpha-1}\|\nabla\Phi_\eps\|_{L^2\left(\Omega_\eps\right)} = \eps^{\alpha-1}\|\nabla\Phi^{\text{hom}}_\eps\|_{L^2\left(\Omega_\eps\right)} \leq C$.
\end{proof}


The (stationary) system consisting of (\ref{NernstPlanck}) and (\ref{Poisson}) without convective term is
well known as drift-diffusion model or van-Roosbroeck
system in the theory of semiconductor devices \cite{Roubicek}.
Analytical investigations treating existence and uniqueness of
solutions of this system can be found in \cite{Markovich} and \cite{Roubicek}.
Extensions of the system (\ref{NernstPlanck}) and (\ref{Poisson}) to the Navier-Stokes
equations have been considered analytically, for instance, in \cite{Roubicek},
\cite{Schmuck}. The results proven there can be carried over to system~(\ref{EQU:ScaledSystem}) and the following Theorem holds true:


\begin{theorem}\label{THM:ExistenceUniqueness}
Let Assumption~\ref{ASS:zeroMean} and~\ref{ASS:data} be valid. For
each $\eps > 0$ there exists a unique weak solution of Problem
$P_\eps$ in the sense of Definition~\ref{DEF:weaksolution}.
\end{theorem}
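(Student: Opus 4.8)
The plan is to establish existence by a decoupling fixed-point argument and uniqueness by an energy estimate on the difference of two solutions, keeping $\eps > 0$ fixed throughout so that only qualitative (not $\eps$-uniform) bounds are required. It is convenient to work with the modified Problem $\tilde{P}_\eps$, since the cut-off $\tilde{c}^\pm_\eps = \max(0,c^\pm_\eps)$ renders the drift coefficient in the Nernst-Planck equations bounded and thus makes the coupling amenable to a compactness argument. By Theorem~\ref{THM:Positivity} the solution produced is non-negative, so the cut-off is inactive and the solution of $\tilde{P}_\eps$ also solves $P_\eps$ in the sense of Definition~\ref{DEF:weaksolution}.

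For existence I would set up the map $T$ that decouples the three blocks. Given a pair $\bar{c} = (\bar{c}^+, \bar{c}^-)$ in a suitable ball of $L^2\left((0,T)\times\Omega_\eps\right)$, first solve the Poisson problem~(\ref{ScaledPoisson})--(\ref{ScaledPoissonOuterRB}) with source $\bar{c}^+ - \bar{c}^-$; this linear elliptic problem is uniquely solvable by Lax-Milgram (in the zero-mean subspace of $H^1(\Omega_\eps)$ in the Neumann case, cf.~Assumption~\ref{ASS:zeroMean}, or after the shift $\Phi^{\text{hom}}_\eps = \Phi_\eps - \Phi_D$ in the Dirichlet case), yielding $\Phi_\eps$. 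Next, with the now-fixed forcing $-\eps^\beta(\bar{c}^+ - \bar{c}^-)\nabla\Phi_\eps$, solve the Stokes system~(\ref{ScaledStokes})--(\ref{ScaledStokesRB}): a unique $v_\eps \in \left(H^1_0(\Omega_\eps)\right)^n$ follows by Lax-Milgram on the space of divergence-free fields, and the pressure $p_\eps$ is recovered (up to the zero-mean normalization of Assumption~\ref{ASS:zeroMean}) via the inf-sup / de~Rham argument. Finally, feeding $v_\eps$ and the $\tilde{c}^\pm_\eps$-based drift into the linear parabolic equations~(\ref{EQU:ModifiedNernstPlanck}) and solving by a Faedo-Galerkin scheme produces new concentrations, which defines $T(\bar{c}) = (c^+_\eps, c^-_\eps)$.

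The fixed point is then obtained from Schauder's theorem. The computations underlying Theorem~\ref{THM:Boundedness} and Theorems~\ref{THM:UniformAPrioriEstimatesNeumann}, \ref{THM:UniformAPrioriEstimatesDirichlet} (read for fixed $\eps$) show that $T$ maps a closed, convex, bounded set into itself, while the Aubin-Lions lemma applied to the parabolic block — using $\partial_t c^\pm_\eps \in L^2(0,T;(H^1(\Omega_\eps))')$ together with $c^\pm_\eps \in L^2(0,T;H^1(\Omega_\eps))$ — furnishes the compactness of $T$ in $L^2\left((0,T)\times\Omega_\eps\right)$. A fixed point of $T$ is a weak solution of $\tilde{P}_\eps$, hence of $P_\eps$ by the remark above. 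Since this is essentially the scheme carried out for the Navier-Stokes-coupled system in~\cite{Roubicek},~\cite{Schmuck}, I would cite those references for the routine verification of the continuity of $T$ and the Galerkin passage to the limit.

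For uniqueness I would take two weak solutions and test the equations for their differences with the differences themselves. The decisive simplification is that, by Theorems~\ref{THM:Positivity} and~\ref{THM:Boundedness}, all concentrations are bounded in $L^\infty\left((0,T)\times\Omega_\eps\right)$ and, by Assumption~\ref{ASS:data}, the reaction terms are linear and hence Lipschitz. The main obstacle is controlling the differences of the coupled nonlinear products, namely $v_\eps c^\pm_\eps$, $\eps^\gamma c^\pm_\eps\nabla\Phi_\eps$, and the Stokes forcing $\eps^\beta(c^+_\eps - c^-_\eps)\nabla\Phi_\eps$: each difference must be split so that one factor is measured in $L^\infty$ (the concentrations) while the other is measured in $L^2$ or $H^1$ (the differences of $v_\eps$ and of $\Phi_\eps$), the latter being estimated against the difference of the corresponding concentrations through the linear Poisson and Stokes solvers. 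Collecting these contributions, absorbing the gradient terms on the left-hand side, and invoking Gronwall's lemma forces the differences to vanish, which proves uniqueness.
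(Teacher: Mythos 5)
First, a point of comparison: the paper does not actually prove Theorem~\ref{THM:ExistenceUniqueness} at all. It justifies the statement purely by citation, asserting that the existence and uniqueness results for the related drift-diffusion/van-Roosbroeck systems in \cite{Markovich}, \cite{Roubicek} and for the Navier-Stokes-coupled system in \cite{Schmuck} ``can be carried over'' to system~(\ref{EQU:ScaledSystem}). Your proposal reconstructs exactly the scheme those references use -- decoupling into Poisson/Stokes/Nernst-Planck blocks, Lax-Milgram plus de~Rham for the linear blocks, Schauder with Aubin-Lions compactness for the fixed point, and working with $\tilde{P}_\eps$ so that Theorem~\ref{THM:Positivity} deactivates the cut-off -- so in spirit your route coincides with the one the paper delegates to, and your existence sketch is sound at the level of detail one can reasonably demand.

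There is, however, one concrete soft spot in your uniqueness paragraph. You claim each nonlinear difference ``must be split so that one factor is measured in $L^\infty$ (the concentrations) while the other is measured in $L^2$ or $H^1$.'' For the drift term this splitting does not cover everything: writing $c_1\nabla\Phi_1 - c_2\nabla\Phi_2 = (c_1-c_2)\nabla\Phi_1 + c_2\nabla(\Phi_1-\Phi_2)$, the second summand fits your pattern, but in the first the $L^\infty$-bounded concentration is absent -- after testing with $c_1-c_2$ you face $\int_{\Omega_\eps}(c_1-c_2)\nabla\Phi_1\cdot\nabla(c_1-c_2)\,dx$, where both concentration factors are only $L^2$/$H^1$ differences, and $\nabla\Phi_1$ is a priori only in $L^2$. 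To close this you need either improved regularity, $\nabla\Phi_1\in L^p$ with $p>n$, via elliptic regularity from $c^\pm_\eps\in L^\infty$ (delicate here, since Assumption~\ref{ASS:data} only grants a $C^{0,1}$ boundary), or -- cleaner and matching the paper's own toolkit -- the identity $(c_1-c_2)\nabla(c_1-c_2)=\tfrac{1}{2}\nabla(c_1-c_2)^2$ followed by integration by parts, which converts the troublesome integral into $\tfrac{1}{2}\eps^{-\alpha}\int_{\Omega_\eps}(c_1^+-c_1^-)(c_1-c_2)^2\,dx$ using the Poisson equation~(\ref{ScaledPoisson}); the factor $c_1^+-c_1^-$ is bounded by Theorem~\ref{THM:Boundedness}, so Gronwall applies. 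This is precisely the device already used in the proofs of Theorems~\ref{THM:Positivity} and~\ref{THM:Boundedness}, and the same remark applies to the analogous cross term $(c_1^\pm-c_2^\pm)\nabla\Phi_1$ in the Stokes forcing, where a Sobolev embedding on $v_1-v_2$ (with the $\eps^2$-weighted gradient, harmless at fixed $\eps$) or the same regularity upgrade is needed. With that repair your argument goes through; as written, the stated splitting would fail on these terms.
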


\section{Upscaling of Problem $P_\eps$}\label{SEC:TwoScaleConv}

This section is the bulk of the paper. Here we pass rigorously to
the limit $\eps\rightarrow 0$ in the non-stationary pore scale model
$P_\eps$ for both the Neumann and Dirichlet case and different choices of scaling~$(\alpha,\beta,\gamma)$. For this aim we apply the
method of two-scale convergence which has been introduced by
Nguetseng in \cite{Nguetseng} and further developed by Allaire
in \cite{Allaire}. An introduction to this topic and the
application of this method to basic model equations can be found,
for example, in \cite{CD2000} and \cite{Hornung}. For the
reader's convenience, we state the definition of two-scale
convergence as well as the basic compactness result for functions
defined on a time-space cylinder, see, e.g., \cite{Marciniak}
and \cite{Neuss-Radu}:

\begin{definition}\label{DEF:TwoScaleConv+Para}
A sequence of functions $\{\phi_\eps\}$ in $L^2\left((0,T)\times\Omega\right)$ is said to two-scale converge to a limit $\phi_0$ belonging to $L^2\left((0,T)\times\Omega\times Y\right)$ if, for any function $\psi$ in $D\left((0,T)\times\Omega;C^\infty_{\text{per}}\left(Y\right)\right)$, we have
\begin{align*}
\lim_{\eps\rightarrow 0}\int_0^T\!\!\!\int_\Omega \phi_\eps(t,x)\psi\left(t,x,\frac{x}{\eps}\right) \,dx\,dt = \int_0^T\!\!\!\int_{\Omega\times Y} \phi_0(t,x,y)\psi(t,x,y) \,dy\,dx\,dt.
\end{align*}
In short notation we write $\phi_\eps \overset{2}{\rightharpoonup} \phi_0$.\\
A sequence of functions $\{\phi_\eps\}$ in $L^2\left((0,T)\times\Gamma_\eps\right)$ is said to two-scale converge to a limit $\phi_0$ belonging to $L^2\left((0,T)\times\Omega\times \Gamma\right)$ if, for any function $\psi$ in $D\left((0,T)\times\Omega;C^\infty_{\text{per}}(\Gamma)\right)$, we have
\begin{align*}
\lim_{\eps\rightarrow 0}\eps\int_0^T\int_{\Gamma_\eps} \phi_\eps(t,x)\psi\left(t,x,\frac{x}{\eps}\right) \,do_x\,dt = \int_0^T\int_{\Omega\times \Gamma} \phi_0(t,x,y)\psi(t,x,y) \,dy\,dx\,dt.
\end{align*}
\end{definition}\noindent
Here $D\left((0,T)\times\Omega;C^\infty_{\text{per}}(Y)\right)$ and $D\left((0,T)\times\Omega;C^\infty_{\text{per}}(\Gamma)\right)$ denote the function space of infinitely smooth functions having compact support in $(0,T)\times\Omega$ with values in the space of infinitely differentiable functions that are periodic in $Y$ and $\Gamma$, respectively. The following compactness result allows to extract converging
subsequences from bounded sequences and therefore yields the
possibility to pass to the two-scale limit provided that suitable
\emph{a priori} estimates can be shown.

\begin{theorem}\label{THM:Compactness+Para}\neuezeile
\begin{enumerate}
\item
Let $\{\phi_\eps\}$ be a bounded sequence in $L^2\left((0,T)\times\Omega\right)$. Then there exists a function $\phi_0$ in $L^2\left((0,T)\times\Omega\times Y\right)$ such that, up to a subsequence, $\phi_\eps$ two-scale converges to $\phi_0$.
\item
Let $\{\phi_\eps\}$ be a bounded sequence in $L^2\left(0,T; H^1\left(\Omega\right)\right)$. Then there exist functions $\phi_0$ in $L^2\left(0,T;H^1\left(\Omega\right)\right)$ and $\phi_1$ in $L^2\left((0,T)\times\Omega; H^1_{\text{per}}\left(Y\right)\right)$ such that, up to a subsequence, $\phi_\eps$ two-scale converges to $\phi_0$ and $\nabla \phi_\eps$ two-scale converges to $\nabla_x\phi_0 + \nabla_y \phi_1$.
\item
Let $\{\phi_\eps\}$ and $\{\eps\nabla \phi_\eps\}$ be bounded sequence in $L^2\left((0,T)\times\Omega\right)$. Then there exists a function $\phi_0$ in $L^2\left((0,T)\times\Omega;H^1_{\text{per}}\left(Y\right)\right)$ such that, up to a subsequence, $\phi_\eps$ and $\eps\nabla \phi_\eps$ two-scale converge to $\phi_0$ and $\nabla_y \phi_0$, respectively.
\item
Let $\{\phi_\eps\}$ be a bounded sequence in $L^2\left((0,T)\times\Gamma_\eps\right)$. Then there exists a function $\phi_0$ in $L^2\left((0,T)\times\Omega\times\Gamma\right)$ such that, up to a subsequence, $\phi_\eps$ two-scale converges to $\phi_0$.
\end{enumerate}
\end{theorem}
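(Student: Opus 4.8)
The strategy is to reduce all four statements to the first one, which is the fundamental two-scale compactness result, and then to bootstrap the gradient statements by integration by parts against oscillating test functions. Since the time variable~$t$ enters merely as an inert parameter, the classical stationary arguments of Nguetseng~\cite{Nguetseng} and Allaire~\cite{Allaire} carry over, and I would essentially follow the presentation for time-space cylinders in~\cite{Marciniak} and~\cite{Neuss-Radu}.

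For item~1, each~$\phi_\eps$ induces a linear functional on the admissible test space $D\left((0,T)\times\Omega;C^\infty_{\text{per}}(Y)\right)$ via the left-hand side of the identity in Definition~\ref{DEF:TwoScaleConv+Para}. The analytic core is the oscillation (mean-value) lemma asserting that $\|\psi(t,x,x/\eps)\|_{L^2((0,T)\times\Omega)} \to \|\psi\|_{L^2((0,T)\times\Omega\times Y)}$ for every admissible~$\psi$; this follows from Riemann--Lebesgue averaging of $Y$-periodic functions together with the uniform continuity supplied by smoothness. Combined with Cauchy--Schwarz and the uniform bound $\|\phi_\eps\|_{L^2}\leq C$, the functionals are equibounded by $C\|\psi\|_{L^2((0,T)\times\Omega\times Y)}$. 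Separability of the test space permits a diagonal extraction, Banach--Alaoglu produces a convergent subsequence, and Riesz representation in $L^2\left((0,T)\times\Omega\times Y\right)$ yields the limit~$\phi_0$.

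For items~2 and~3, I would apply item~1 to~$\phi_\eps$ and to $\nabla\phi_\eps$ (respectively $\eps\nabla\phi_\eps$), obtaining two-scale limits~$\phi_0$ and~$\xi_0$ (respectively~$\chi_0$). The identification of the limit structure rests on the product rule $\nabla[\psi(t,x,x/\eps)] = (\nabla_x\psi)(t,x,x/\eps) + \eps^{-1}(\nabla_y\psi)(t,x,x/\eps)$. In item~3, testing $\eps\nabla\phi_\eps$ against $\psi$ and integrating by parts, the $\eps(\nabla_x\cdot\psi)$ contribution disappears in the limit and what remains forces $\chi_0 = \nabla_y\phi_0$ weakly, so that $\phi_0\in L^2\left((0,T)\times\Omega;H^1_{\text{per}}(Y)\right)$. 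In item~2, first testing with functions of the form $\eps\psi$ shows $\nabla_y\phi_0 = 0$, hence $\phi_0$ is independent of~$y$ and lies in $L^2\left(0,T;H^1(\Omega)\right)$; then restricting to test fields with $\nabla_y\cdot\psi = 0$ kills the singular $\eps^{-1}$ term, and an integration by parts in~$x$ gives that $\xi_0 - \nabla_x\phi_0$ is orthogonal to every $Y$-periodic divergence-free field.

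The step I expect to be the main obstacle is completing the identification in item~2. Concluding from the orthogonality above that $\xi_0 - \nabla_x\phi_0 = \nabla_y\phi_1$ for some $\phi_1\in L^2\left((0,T)\times\Omega;H^1_{\text{per}}(Y)/\IR\right)$ requires the de Rham/Helmholtz characterization on the torus: the annihilator of the space of periodic divergence-free vector fields is exactly the space of periodic gradients. This is the one genuinely nontrivial functional-analytic input beyond item~1. Finally, item~4 is the boundary analogue of item~1 on the oscillating hypersurface~$\Gamma_\eps$; here the weight~$\eps$ in Definition~\ref{DEF:TwoScaleConv+Para} compensates the $O(\eps^{-1})$ growth of the surface measure of~$\Gamma_\eps$, so that the corresponding surface oscillation lemma holds, and the same separability, Banach--Alaoglu and Riesz representation steps---now in $L^2\left((0,T)\times\Omega\times\Gamma\right)$---produce~$\phi_0$.
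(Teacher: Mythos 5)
Your proposal is correct and takes essentially the same route as the paper: the paper's proof consists of citing the classical stationary compactness results of Nguetseng~\cite{Nguetseng}, Allaire~\cite{Allaire} and Neuss-Radu~\cite{Neuss-Radu} and remarking that the argument carries over when time enters as a parameter, which is precisely the argument you spell out. Your sketch (oscillation lemma, equiboundedness, diagonal extraction and Riesz representation for items~1 and~4; the integration-by-parts identification and the de Rham/Helmholtz characterization of the annihilator of periodic divergence-free fields for items~2 and~3) reproduces the standard proofs those references contain, so there is no substantive difference.
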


\begin{proof}
For a proof of the time independent case we refer e.\,g. to
\cite{Allaire}, \cite{Neuss-Radu} and \cite{Nguetseng}. The proof can easily be
carried over to the time dependent case.
\end{proof}


One difficulty is that the \emph{a priori} estimates that have been derived in Theorem~\ref{THM:UniformAPrioriEstimatesNeumann} and Theorem~\ref{THM:UniformAPrioriEstimatesDirichlet} are at
first only valid within the perforated domain $\Omega_\eps$.
Therefore an extension of the functions $v_\eps, \nabla v_\eps, p_\eps, \Phi_\eps,
\nabla\Phi_\eps, c^\pm_\eps, \partial_t c^\pm_\eps, \nabla
c^\pm_\eps$ is necessary, such that appropriate \emph{a priori}
estimates can be extended and that the limits for $\eps\rightarrow 0$ can be identified in
function spaces on~$\Omega$. This procedure is quite standard and we refer to \cite{Allaire}, \cite{Cioranescu79}, \cite{Cioranescu99}, \cite{Hornung} and \cite{Hornung91} for the strategy and the proof of the following

\begin{theorem}\label{THM:Extensions}
For the concentration fields $c^\pm_\eps$ we apply a linear extensions operator $E\in {\cal{L}}\left(H^1(\Omega_\eps), H^1\left(\Omega\right)\right)$, such that
\begin{align*}
\|E\left(c^\pm_\eps\right)\|^2_{H^1\left(\Omega\right)} :=
\|E\left(c^\pm_\eps\right)\|^2_{L^2\left(\Omega\right)} + \|\nabla E\left(c^\pm_\eps\right)\|^2_{L^2\left(\Omega\right)}
&\leq C \|c^\pm_\eps\|^2_{H^1(\Omega_\eps)}
\end{align*}
is valid.\\
The pressure field $p_\eps$ is extended via
\begin{align*}
E(p_\eps) &:=
\begin{cases}
p_\eps & \text{in } \Omega_\eps,\\
\frac{1}{|Y^\eps_{l,i}|}\int_{Y^\eps_{l,i}}p_\eps dy & \text{in each } Y^\eps_{s,i},
\end{cases}
\end{align*}
and the following uniform \emph{a priori} estimate holds if we assume zero mean value in $\Omega$:
\begin{align*}
\|E(p_\eps)\|_{L^2((0,T)\times\Omega)} \leq C.
\end{align*}
The other variables are extended by zero into $\Omega$.
Then $\Omega_\eps$ can be replaced by $\Omega$ in the \emph{a priori} estimates from Theorem~\ref{THM:UniformAPrioriEstimatesNeumann} and Theorem~\ref{THM:UniformAPrioriEstimatesDirichlet}.
\end{theorem}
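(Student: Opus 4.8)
The plan is to build the operator $E$ cell by cell from a fixed reference-cell extension and then to control its behaviour under the $\eps$-scaling. First I would invoke the standard Sobolev-type extension theorem on the reference geometry: since the solid inclusion $Y_s$ is compactly contained in $Y$ (it does not meet $\partial Y$) and $\partial Y_l$ is Lipschitz, there is a linear operator $E_Y\in\mathcal{L}(H^1(Y_l),H^1(Y))$ satisfying the two \emph{order-homogeneous} bounds
\[
\|E_Y u\|_{L^2(Y)}\leq C\|u\|_{L^2(Y_l)},\qquad \|\nabla E_Y u\|_{L^2(Y)}\leq C\|\nabla u\|_{L^2(Y_l)},
\]
where, crucially, the gradient estimate involves only the gradient on the right-hand side. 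This is exactly the Cioranescu--Saint Jean Paulin construction, see \cite{Cioranescu79} and \cite{Hornung91}. The operator $E$ on $\Omega_\eps$ is then defined on each scaled cell $Y^\eps_i$ by transporting $E_Y$ through the affine change of variables $x\mapsto x/\eps$.

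The heart of the argument is that this construction is uniform in $\eps$, and here the separation of orders above is decisive. Under the rescaling $y=x/\eps$ one has $\|u\|^2_{L^2(Y^\eps_{l,i})}=\eps^n\|\hat u\|^2_{L^2(Y_l)}$ and $\|\nabla u\|^2_{L^2(Y^\eps_{l,i})}=\eps^{n-2}\|\nabla\hat u\|^2_{L^2(Y_l)}$, with $\hat u$ the pulled-back function. Applying the two reference bounds to $\hat u$ and scaling back, the powers $\eps^n$ and $\eps^{n-2}$ cancel against themselves, so that $\|E u\|_{L^2(Y^\eps_i)}\leq C\|u\|_{L^2(Y^\eps_{l,i})}$ and $\|\nabla E u\|_{L^2(Y^\eps_i)}\leq C\|\nabla u\|_{L^2(Y^\eps_{l,i})}$ with the same $C$ for every cell and every $\eps$. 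Summing over $i$ yields the claimed global estimate $\|E(c^\pm_\eps)\|^2_{H^1(\Omega)}\leq C\|c^\pm_\eps\|^2_{H^1(\Omega_\eps)}$. Had the reference estimate controlled only the full $H^1(Y_l)$ norm, the scaling would have produced a spurious $\eps^{-1}$ in front of the gradient, destroying uniformity.

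For the pressure I would extend by the cell-average into the solid part, as prescribed, and obtain the uniform $L^2$ bound by the duality argument classical in the homogenization of Stokes flow, see \cite{Allaire} and \cite{Cioranescu99}: one constructs a family of restriction operators $R_\eps\in\mathcal{L}((H^1_0(\Omega))^n,(H^1_0(\Omega_\eps))^n)$ that preserve zero divergence and satisfy $\|R_\eps\varphi\|_{L^2}+\eps\|\nabla R_\eps\varphi\|_{L^2}\leq C\|\varphi\|_{H^1_0(\Omega)}$ uniformly in $\eps$, and then estimates $E(p_\eps)$ by testing the Stokes equation (\ref{EQU:weakStokes}) against $R_\eps\varphi$ and invoking the velocity bound from Theorem~\ref{THM:UniformAPrioriEstimatesNeumann}. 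This is the main obstacle: unlike the concentrations, $p_\eps$ is only $L^2$ and admits no pointwise Sobolev extension with a uniform bound, so the control must come through the dual operator $R_\eps$, whose construction rests on a Bogovskii-type inf-sup estimate on the reference cell together with the connectedness of the fluid phase $Y_l$; verifying the correct $\eps$-power in its bound is the delicate point.

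The remaining fields are extended by zero, for which the $L^2(\Omega)$ and $L^2(0,T;(H^1)')$ norms coincide with their $\Omega_\eps$ counterparts, so the corresponding estimates transfer verbatim. With every extension bounded uniformly in $\eps$, the extended norms dominate the perforated-domain norms up to a fixed constant $C$, and hence $\Omega_\eps$ may be replaced by $\Omega$ throughout the \emph{a priori} estimates of Theorem~\ref{THM:UniformAPrioriEstimatesNeumann} and Theorem~\ref{THM:UniformAPrioriEstimatesDirichlet}.
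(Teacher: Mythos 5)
Your proposal is correct and follows essentially the same route as the paper, which in fact gives no proof of this theorem at all but refers precisely to the references you invoke (Cioranescu--Saint Jean Paulin, Hornung--J\"ager, Allaire) for the standard strategy. Your reconstruction --- the order-homogeneous reference-cell bounds $\|E_Y u\|_{L^2}\leq C\|u\|_{L^2}$, $\|\nabla E_Y u\|_{L^2}\leq C\|\nabla u\|_{L^2}$ made $\eps$-uniform by the scaling $y=x/\eps$, the cell-average pressure extension controlled by duality with the divergence-preserving restriction operator $R_\eps$, and zero extension for the remaining fields --- is exactly that cited argument, including the key observation that a non-separated $H^1$ bound on the reference cell would destroy the uniformity in $\eps$.
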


However, for the ease of presentation we suppress the notation of the extensions and write again $\varphi_\eps$ instead of $E(\varphi_\eps)$.

In the next two subsections we consider the homogenization of system
(\ref{EQU:ScaledSystem}) for both the Neumann and Dirichlet case via two-scale convergence. The
statements on the two-scale limits of the extended functions and on
the derivation of the macroscopic limit equations are deduced using
the \emph{a priori} estimates in Theorem~\ref{THM:UniformAPrioriEstimatesNeumann} and Theorem~\ref{THM:UniformAPrioriEstimatesDirichlet}.
Special attention is paid to the coupling via the electrostatic
interaction and the influence of the ranges of scaling on the limit
equations. We first state the following

\begin{definition}
We define the averaged macroscopic permittivity and diffusion tensor by
\begin{align}\label{EQU:AvTensor}
D_{ij} := \int_{Y_l}\left(\delta_{ij} +
\partial_{y_i}\phi_j\left(y\right)\right) \,dy,
\end{align}
where $\phi_j$ are solutions of the following family of cell problems ($j=1,\ldots,n$)
\begin{subequations}\label{EQU:CellProb}
\begin{align}
-\Delta_y\phi_j\left(y\right) &= 0 && \text{in } Y_l,\\
\nabla_y\phi_j\left(y\right)\cdot\nu &= -e_j\cdot\nu && \text{on } \Gamma,\\
\phi_j & && \text{periodic in } y.
\end{align}
\end{subequations}
We define the averaged macroscopic permeability tensor by
\begin{align}\label{EQU:DefAveragedPermeability}
K_{ij} = \int_{Y_l} w^i_j \,dy,
\end{align}
where $w_j$ are solutions of the following family of cell problems ($j=1,\ldots,n$)
\begin{subequations}\label{EQU:cellProblemDarcy}
\begin{align}
- \Delta_y w_j + \nabla_y\pi_j &= e_j && \text{in } Y_l\\
\nabla_y\cdot w_j &= 0 && \text{in } \Omega\times Y_l\\
w_j &= 0 && \text{in } Y_s\\
w_j & && \text{periodic in } y
\end{align}
\end{subequations}
Furthermore, we define the following cell problem
\begin{subequations}\label{EQU:CellProbDirichlet}
\begin{align}
-\Delta_y\phi\left(y\right) &= 1 && \text{in } Y_l,\\
\phi\left(y\right) &= 0 && \text{on } \Gamma,\\
\phi & && \text{periodic in } y.
\end{align}
\end{subequations}
\end{definition}

\subsection{Neumann boundary condition}\label{SEC:UpscalingNeumann}
%
We define $\tilde{\Phi}_\eps := \eps^\alpha \Phi_\eps$.
%
\subsubsection{Homogenized Limit Problems for Poisson's Equation}\label{SEC:HomPotential} 


\begin{theorem}\label{THM:limitsPhiNeumann}
Let the \emph{a priori} estimates of Theorem~\ref{THM:UniformAPrioriEstimatesNeumann} be
valid. Then the following two-scale limits can be identified for the electrostatic potential $\tilde{\Phi}_\eps$ and its gradient
$\nabla\tilde{\Phi}_\eps$: There exist functions
$\tilde{\Phi}_0 \!\in\!
L^2\!\left(0,T;H^1\!\left(\Omega\right)\right)$ and
$\tilde{\Phi}_1\!\!\in\!
L^2\!\left((0,T)\times\Omega;H^1_{\text{per}}\left(Y\right)\right)$
such that, up to a subsequence,
\begin{align*}
\tilde{\Phi}_\eps(t,x) & \overset{2}{\rightharpoonup} \tilde{\Phi}_0(t,x),\\
\nabla\tilde{\Phi}_\eps(t,x) & \overset{2}{\rightharpoonup} \nabla_x\tilde{\Phi}_0(t,x) +
\nabla_y\tilde{\Phi}_1(t,x,y).
\end{align*}
\end{theorem}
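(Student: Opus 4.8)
The plan is to recognise this statement as a direct application of the second part of the compactness result in Theorem~\ref{THM:Compactness+Para}, so that the whole task reduces to verifying its single hypothesis: that $\tilde{\Phi}_\eps$ is bounded in $L^2(0,T;H^1(\Omega))$ uniformly in $\eps$. First I would rewrite the \emph{a priori} estimate~(\ref{EQU:APrioriPhiNeumann}) in terms of the scaled potential. By definition $\tilde{\Phi}_\eps = \eps^\alpha\Phi_\eps$, hence $\nabla\tilde{\Phi}_\eps = \eps^\alpha\nabla\Phi_\eps$, and~(\ref{EQU:APrioriPhiNeumann}) reads precisely $\|\tilde{\Phi}_\eps\|_{L^2((0,T)\times\Omega_\eps)} + \|\nabla\tilde{\Phi}_\eps\|_{L^2((0,T)\times\Omega_\eps)} \leq C$ with $C$ independent of $\eps$. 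Thus $\tilde{\Phi}_\eps$ is bounded in $L^2(0,T;H^1(\Omega_\eps))$; this is exactly what the rescaling by $\eps^\alpha$ is engineered to achieve, compensating the degenerating prefactor in the Poisson equation~(\ref{ScaledPoisson}).

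The second step is to transfer this bound from the perforated domain $\Omega_\eps$ to the fixed domain $\Omega$. Here I would invoke the uniform extension operator of Theorem~\ref{THM:Extensions}, applied to $\tilde{\Phi}_\eps$ in the same way as it is applied to the concentration fields: owing to the $C^{0,1}$-regularity of $\Omega_\eps$ assumed in Assumption~\ref{ASS:data}, there is a linear extension $E \in \mathcal{L}(H^1(\Omega_\eps),H^1(\Omega))$ whose operator norm is bounded independently of $\eps$, whence $\|E(\tilde{\Phi}_\eps)\|_{L^2(0,T;H^1(\Omega))} \leq C\|\tilde{\Phi}_\eps\|_{L^2(0,T;H^1(\Omega_\eps))} \leq C$. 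Following the convention fixed after Theorem~\ref{THM:Extensions}, I would keep writing $\tilde{\Phi}_\eps$ for its extension.

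With the $\eps$-uniform $L^2(0,T;H^1(\Omega))$-bound in hand, the conclusion is immediate: the second part of Theorem~\ref{THM:Compactness+Para} yields, up to a subsequence, limits $\tilde{\Phi}_0 \in L^2(0,T;H^1(\Omega))$ and $\tilde{\Phi}_1 \in L^2((0,T)\times\Omega;H^1_{\text{per}}(Y))$ such that $\tilde{\Phi}_\eps \overset{2}{\rightharpoonup} \tilde{\Phi}_0$ and $\nabla\tilde{\Phi}_\eps \overset{2}{\rightharpoonup} \nabla_x\tilde{\Phi}_0 + \nabla_y\tilde{\Phi}_1$, which is precisely the assertion of the theorem.

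I expect the only genuinely delicate point to be the extension step. A naive extension-by-zero of $\tilde{\Phi}_\eps$ would fail, since the potential carrying Neumann data does not vanish on $\Gamma_\eps$, so its zero extension would not lie in $H^1(\Omega)$: the interface jump would contaminate the distributional gradient and destroy the very bound required by Theorem~\ref{THM:Compactness+Para}. Using the genuine $H^1$-extension operator (as for the concentrations, rather than the zero extension appropriate for Dirichlet-type quantities such as the velocity) is what makes the argument go through, and it is admissible precisely because the scale-invariant geometry of the perforation guarantees an $\eps$-independent operator norm.
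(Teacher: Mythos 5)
Your proof is correct, and its skeleton coincides with the paper's two-line argument: rewrite the \emph{a priori} estimate~(\ref{EQU:APrioriPhiNeumann}) as a uniform $L^2(0,T;H^1)$-bound on $\tilde{\Phi}_\eps=\eps^\alpha\Phi_\eps$, then invoke the second part of Theorem~\ref{THM:Compactness+Para}. Where you genuinely diverge is the extension step, and here your version is actually the more careful one. The paper's Theorem~\ref{THM:Extensions} reserves the uniform $H^1$-extension operator $E$ for the concentration fields and declares that $\Phi_\eps$ and $\nabla\Phi_\eps$ are ``extended by zero into $\Omega$''; its proof of the present theorem then quotes the bound on all of $\Omega$ and applies the compactness theorem without further comment. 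As you correctly observe, the zero extension of $\tilde{\Phi}_\eps$ is in general not an $H^1(\Omega)$-function (the potential does not vanish on $\Gamma_\eps$), so part~2 of Theorem~\ref{THM:Compactness+Para} does not apply to it verbatim; your use of the $\eps$-uniformly bounded extension operator closes this gap cleanly and is legitimate, since $E$ is an operator on $H^1(\Omega_\eps)$ and nothing restricts it to the concentrations. One caveat: your closing claim that the zero extension ``would destroy the argument'' is overstated. With the zero-extension convention one applies instead the standard perforated-domain variant of two-scale compactness (for sequences bounded in $H^1(\Omega_\eps)$ with connected pore space), which yields the same limits but with the characteristic function $\chi_{Y_l}(y)$ multiplying $\nabla_x\tilde{\Phi}_0+\nabla_y\tilde{\Phi}_1$ --- and this is visibly the convention the paper actually uses downstream, since the limit identification in Theorem~\ref{THM:makro equations Phi Neumann} carries the factor $\chi(y)$ and integrates over $Y_l$. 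So both routes are viable: yours buys direct applicability of part~2 of Theorem~\ref{THM:Compactness+Para} and a limit statement free of $\chi$, exactly as the theorem is phrased, while the paper's zero-extension convention defers the geometric information to the $\chi_{Y_l}$ factors in the subsequent macroscopic derivation.
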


\begin{proof}
We consider the estimate (\ref{EQU:APrioriPhiNeumann}) in Theorem~\ref{THM:UniformAPrioriEstimatesNeumann} which implies
\begin{align*}
\|\tilde{\Phi}_\eps\|_{L^2\left(\Omega\right)} + \|\nabla\tilde{\Phi}_\eps\|_{L^2\left(\Omega\right)} & \leq C.
\end{align*}
Theorem~\ref{THM:Compactness+Para} ensures the existence of the two-scale limit functions.
\end{proof}


\begin{theorem}\label{THM:makro equations Phi Neumann}
Let $\left(v_\eps, p_\eps, \Phi_\eps, c_\eps^+, c_\eps^-\right)$ be a weak solution of
Problem $P_\eps$ in the sense of Definition~\ref{DEF:weaksolution}. Assume that $c^\pm_\eps$ converge strongly to $c^\pm_0$ in $L^2\left((0,T)\times\Omega\right)$.
Then the two-scale limits of $\tilde{\Phi}_\eps$ due to Theorem~\ref{THM:limitsPhiNeumann} satisfy the
following equations:
\begin{align*}
-\nabla_x\cdot\left(D\nabla_x\tilde{\Phi}_0(t,x)\right) - \bar{\sigma}_0 &= |Y_l|\left(c^+_0(t,x) - c^-_0(t,x)\right) && \text{in } (0,T)\times\Omega,\\
D\nabla_x\tilde{\Phi}_0(t,x)\cdot\nu &= 0 && \text{on } (0,T)\times\partial\Omega.
\end{align*}
\end{theorem}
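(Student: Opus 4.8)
The plan is to pass to the two-scale limit in the weak Poisson equation~(\ref{EQU:weakPoisson}), rewritten in terms of $\tilde{\Phi}_\eps = \eps^\alpha\Phi_\eps$. First I would substitute the Neumann data~(\ref{ScaledPoissonNeumannRB}) into the surface integral, so that~(\ref{EQU:weakPoisson}) reads
\begin{align*}
\int_{\Omega_\eps}\nabla\tilde{\Phi}_\eps\cdot\nabla\varphi_2\,dx - \eps\int_{\Gamma_\eps}\sigma\varphi_2\,do_x = \int_{\Omega_\eps}(c^+_\eps - c^-_\eps)\varphi_2\,dx,
\end{align*}
and integrate with respect to time. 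As test functions I would take the oscillating family $\varphi_2 = \varphi_0(t,x) + \eps\varphi_1(t,x,x/\eps)$ with $\varphi_0\in D((0,T)\times\Omega)$ and $\varphi_1\in D((0,T)\times\Omega;C^\infty_{\text{per}}(Y))$, so that $\nabla\varphi_2 = \nabla_x\varphi_0 + \nabla_y\varphi_1(\cdot,\cdot/\eps) + \eps\nabla_x\varphi_1(\cdot,\cdot/\eps)$, the last term being negligible.

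Next I would pass to the limit term by term. For the bulk term I use the two-scale convergence $\nabla\tilde{\Phi}_\eps\overset{2}{\rightharpoonup}\nabla_x\tilde{\Phi}_0 + \nabla_y\tilde{\Phi}_1$ from Theorem~\ref{THM:limitsPhiNeumann} together with Definition~\ref{DEF:TwoScaleConv+Para}, obtaining $\int_0^T\!\int_\Omega\!\int_{Y_l}(\nabla_x\tilde{\Phi}_0 + \nabla_y\tilde{\Phi}_1)\cdot(\nabla_x\varphi_0 + \nabla_y\varphi_1)\,dy\,dx\,dt$. For the surface term, the surface two-scale convergence (second part of Definition~\ref{DEF:TwoScaleConv+Para}) applied to the constant $\sigma$ gives $\int_0^T\int_\Omega\int_\Gamma\sigma\varphi_0\,do_y\,dx\,dt = \bar{\sigma}_0\int_0^T\int_\Omega\varphi_0$ with $\bar{\sigma}_0 = \int_\Gamma\sigma\,do_y$, while the $\eps\varphi_1$ contribution carries an extra factor $\eps$ and drops out. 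For the right-hand side I exploit the assumed strong convergence $c^\pm_\eps\to c^\pm_0$ in $L^2((0,T)\times\Omega)$ combined with $\chi_{\Omega_\eps}\varphi_2\rightharpoonup|Y_l|\varphi_0$, so that the strong--weak product passes to $|Y_l|\int_0^T\int_\Omega(c^+_0-c^-_0)\varphi_0$.

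Having obtained the two-scale variational identity, I would decouple the scales in the standard way. Choosing $\varphi_0\equiv 0$ first isolates the cell identity $\int_{Y_l}(\nabla_x\tilde{\Phi}_0 + \nabla_y\tilde{\Phi}_1)\cdot\nabla_y\varphi_1\,dy = 0$, whose solution is, by linearity, $\tilde{\Phi}_1(t,x,y) = \sum_{j}\partial_{x_j}\tilde{\Phi}_0(t,x)\,\phi_j(y)$ with $\phi_j$ the cell correctors of~(\ref{EQU:CellProb}); a direct check confirms these $\phi_j$ satisfy the divergence equation and the flux condition on $\Gamma$. Choosing then $\varphi_1\equiv 0$ and inserting this representation, the $y$-average $\int_{Y_l}(\nabla_x\tilde{\Phi}_0 + \nabla_y\tilde{\Phi}_1)\,dy$ produces exactly $D\nabla_x\tilde{\Phi}_0$ via the definition~(\ref{EQU:AvTensor}) of $D_{ij}$. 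The resulting macroscopic weak form
\begin{align*}
\int_0^T\!\!\int_\Omega D\nabla_x\tilde{\Phi}_0\cdot\nabla_x\varphi_0 - \bar{\sigma}_0\varphi_0\,dx\,dt = |Y_l|\int_0^T\!\!\int_\Omega(c^+_0-c^-_0)\varphi_0\,dx\,dt
\end{align*}
is, by density and integration by parts, equivalent to the claimed strong form together with the homogeneous Neumann condition $D\nabla_x\tilde{\Phi}_0\cdot\nu = 0$ on $\partial\Omega$.

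The main obstacle I anticipate lies in the two contributions that are not pure bulk gradient terms: the oscillatory surface integral, where one must justify the $\eps$-weighted surface two-scale limit and verify that the corrector part genuinely vanishes, and the right-hand side, where the passage relies essentially on the \emph{strong} $L^2$-convergence of $c^\pm_\eps$ hypothesised in the statement (weak convergence alone would not let the product pass against the oscillating test function). Everything else is routine, since Poisson's equation is linear and the delicate nonlinear drift coupling that dominates the rest of the paper does not enter here.
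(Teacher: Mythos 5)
Your proposal is correct and follows essentially the same route as the paper's proof: test the weak Poisson equation with $\psi_0(t,x)+\eps\psi_1\left(t,x,\frac{x}{\eps}\right)$, pass to the two-scale limit (surface two-scale convergence for the $\sigma$-term, strong $L^2$-convergence of $c^\pm_\eps$ against the weakly converging $\chi_{\Omega_\eps}$ for the right-hand side), decouple by choosing first $\psi_0\equiv 0$ and then $\psi_1\equiv 0$, and insert the corrector representation $\tilde{\Phi}_1=\sum_j\phi_j(y)\partial_{x_j}\tilde{\Phi}_0$ to produce the tensor $D$ of~(\ref{EQU:AvTensor}). The only difference is cosmetic: your careful weak-form bookkeeping in fact yields the sign $-\nabla_x\cdot\left(D\nabla_x\tilde{\Phi}_0\right)$ of the theorem statement, correcting a sign slip in the last displayed equation of the paper's own proof.
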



\begin{proof}
To prove Theorem~\ref{THM:makro equations Phi Neumann} we test Poisson's equation~(\ref{EQU:weakPoisson}) with test function~$\left(\psi_0(t,x) \!+\! \eps\psi_1\!\left(t,x,\frac{x}{\eps}\right)\right)$ which leads to
\begin{align*}
\begin{multlined}[][0.9\linewidth]
\int_0^T\int_{\Omega} \nabla\tilde{\Phi}_\eps (t,x)\cdot\nabla\left(\psi_0(t,x) +
\eps\psi_1\left(t,x,\frac{x}{\eps}\right)\right) \,dx \,dt\\
-
\int_0^T\int_{\Gamma_\eps} \eps\sigma\left(\psi_0(t,x) +
\eps\psi_1\left(t,x,\frac{x}{\eps}\right)\right) \,dx \,dt\\
=
\int_0^T\int_{\Omega} \chi_\eps\left(x\right) \left( c_\eps^+(t,x) -
c_\eps^-(t,x) \right)\left(\psi_0(t,x) +
\eps\psi_1\left(t,x,\frac{x}{\eps}\right)\right)\,dx \,dt.
\end{multlined}
\end{align*}
We then pass to the two-scale limit $\eps\rightarrow 0$ using the properties we have
stated in Theorem~\ref{THM:limitsPhiNeumann}:
\begin{align*}
\begin{multlined}[][0.9\linewidth]
\int_0^T\int_{\Omega\times Y_l}\chi\left(y\right) \left(\nabla_x\tilde{\Phi}_0(t,x)
+ \nabla_y\tilde{\Phi}_1(t,x,y)\right)\cdot\left(\nabla_x\psi_0(t,x) +
\nabla_y\psi_1(t,x,y)\right) \,dy \,dx \,dt \\
- \int_0^T\int_{\Omega\times \Gamma}\sigma_0\psi_0(t,x) \,do_y \,dx \,dt\\
=
\int_0^T\int_{\Omega\times Y_l} \chi\left(y\right) \left(c^+_0(t,x) -
c^-_0(t,x)\right)\psi_0(t,x) \,dy \,dx \,dt
\end{multlined}
\end{align*}
Now, we choose $\psi_0(t,x) = 0$, which leads, after integration by parts
with respect to $y$, to
\begin{align*}
-\nabla_y\cdot\left(\nabla_x\tilde{\Phi}_0(t,x) + \nabla_y\tilde{\Phi}_1(t,x,y)\right) &= 0 &&\text{in } (0,T)\times\Omega\times Y_l,\\
\left(\nabla_x\tilde{\Phi}_0(t,x) + \nabla_y\tilde{\Phi}_1(t,x,y)\right)\cdot\nu &= 0 &&
\text{on } (0,T)\times\Omega\times \Gamma,\\
\tilde{\Phi}_1(t,x,y) & && \text{periodic in } y
\end{align*}
and, therefore, also to
\begin{subequations}\label{EQU:ProbPhi1}
\begin{align}
-\Delta_y\tilde{\Phi}_1(t,x,y) &= 0 &&
\text{in } (0,T)\times\Omega\times Y_l,\\
\nabla_y\tilde{\Phi}_1(t,x,y)\cdot\nu &= -\nabla_x\tilde{\Phi}_0(t,x)\cdot\nu &&
\text{on } (0,T)\times\Omega\times \Gamma,\\
\tilde{\Phi}_1(t,x,y) & && \text{periodic in } y.
\end{align}
\end{subequations}
Due to the linearity of the equation, we can deduce the following representation of $\Phi_1$:
\begin{align}\label{RepPhi1}
\tilde{\Phi}_1(t,x,y) = \sum_j\phi_j\left(y\right)\partial_{x_j}\tilde{\Phi}_0(t,x)
\end{align}
with $\phi_j$ being solutions of the standard family of
$j=1,\ldots,n$ cell problems (\ref{EQU:CellProb}).\\
On the other hand, if we choose $\psi_1(t,x,y) = 0$, we may read off, after integration by parts with
respect to $x$, the strong formulation for $\Phi_0$ :
\begin{align*}
\nabla_x\!\cdot\!\left(\int_{Y_l}\nabla_x\tilde{\Phi}_0(t,x) + \nabla_y\tilde{\Phi}_1(t,x,y)\,dy\right)
- \int_{\Gamma}\sigma_0 \,do_y
&=|Y_l|\left(c^+_0(t,x) - c^-_0(t,x)\right) && \text{in } (0,T)\!\times\!\Omega,\\
\left(\int_{Y_l}\nabla_x\tilde{\Phi}_0(t,x) + \nabla_y\tilde{\Phi}_1(t,x,y)\,dy \right)\cdot\nu & = 0 && \text{on } (0,T)\times\partial\Omega.
\end{align*}
Inserting the representation (\ref{RepPhi1}) of $\tilde{\Phi}_1$ yields
\begin{align*}
\nabla_x\cdot \left(D\nabla_x\tilde{\Phi}_0(t,x)\right) - \bar{\sigma}_0 &=
|Y_l|\left(c^+_0(t,x) - c^-_0(t,x)\right) && \text{in } \Omega,\\
D\nabla_x\tilde{\Phi}_0(t,x)\cdot\nu & = 0 && \text{on } \partial\Omega
\end{align*}
with diffusion tensor~$D$ being defined in~(\ref{EQU:AvTensor}) and~$\bar{\sigma}_0 := \int_{\Gamma}\sigma_0 \,do_y$.
\end{proof}


\begin{remark}[Modeling of $\Phi_0$]\label{REM:limitsPhiN}
In the case $\alpha = 0$, it follows $\tilde{\Phi}_\eps = \Phi_\eps$. Therefore, we have an macroscopic equation for the leading order potential $\Phi_0$ which is directly coupled to the macroscopic concentrations $c^\pm_0$. The case $\alpha < 0$ implies that $\Phi_\eps$ and $\nabla\Phi_\eps$ converge to zero. However, for any $\alpha$ an effective equation can be derived for the limit $\tilde{\Phi}_0$ of $\tilde{\Phi}_\eps$.
\end{remark}

\subsubsection{Homogenized Limit Problems for Stokes' Equation}\label{SEC:HomVelocity} 

\begin{theorem}\label{THM:limits V Neumann}
Let the \emph{a priori} estimates of Theorem~\ref{THM:UniformAPrioriEstimatesNeumann} be
valid, i.e. especially $\beta\geq\alpha$. Then the following two-scale limits can be identified for the velocity field $v_\eps$ and the gradient
$\eps\nabla v_\eps$: There exists
$v_0 \in L^2\left((0,T)\times\Omega;H^1_{\text{per}}\left(Y\right)\right)$
such that, up to a subsequence,
\begin{align*}
v_\eps(t,x) & \overset{2}{\rightharpoonup} v_0(t,x,y),\\
\eps \nabla v_\eps(t,x) & \overset{2}{\rightharpoonup} \nabla_yv_0(t,x,y).
\end{align*}
\end{theorem}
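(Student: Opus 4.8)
The plan is to proceed exactly as in the proof of Theorem~\ref{THM:limitsPhiNeumann}, reducing the claim to a direct application of the abstract compactness result. The decisive observation is that the scaling $\eps^2$ in front of the Laplacian in the Stokes equation~(\ref{ScaledStokes}) produces an \emph{a priori} bound in which the full gradient $\nabla v_\eps$ is controlled only after multiplication by $\eps$; this is precisely the structure covered by part~(3) of Theorem~\ref{THM:Compactness+Para}, which yields a two-scale limit lying in $H^1_{\text{per}}(Y)$ in the microscopic variable.

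First I would invoke estimate~(\ref{EQU:APrioriVNeumann}) of Theorem~\ref{THM:UniformAPrioriEstimatesNeumann}, which under the hypothesis $\beta-\alpha\geq 0$ gives
\begin{align*}
\|v_\eps\|_{L^2\left((0,T)\times\Omega_\eps\right)} + \eps\|\nabla v_\eps\|_{L^2\left((0,T)\times\Omega_\eps\right)} \leq C
\end{align*}
with $C$ independent of $\eps$. Since $v_\eps$ vanishes on $\Gamma_\eps\cup\partial\Omega$ by the no-slip condition~(\ref{ScaledStokesRB}), its extension by zero into $\Omega$ (Theorem~\ref{THM:Extensions}) lies in $L^2(0,T;H^1_0(\Omega))$ and obeys the same bound with $\Omega_\eps$ replaced by $\Omega$. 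Reading this inequality entry-by-entry, I obtain two separately bounded families in $L^2((0,T)\times\Omega)$, namely $\{v_\eps\}$ and $\{\eps\nabla v_\eps\}$.

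Next I would apply part~(3) of Theorem~\ref{THM:Compactness+Para} to each scalar component of the vector field $v_\eps$. This furnishes, up to a subsequence, a limit $v_0\in L^2((0,T)\times\Omega;H^1_{\text{per}}(Y))$ with $v_\eps\overset{2}{\rightharpoonup}v_0$ and $\eps\nabla v_\eps\overset{2}{\rightharpoonup}\nabla_y v_0$, which is exactly the assertion.

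I do not expect a genuine obstacle here; the substantive work has already been carried out in establishing~(\ref{EQU:APrioriVNeumann}), and in particular the hypothesis $\beta\geq\alpha$ is precisely what guarantees the $\eps$-independence of the bound. The only point requiring minor care is the vector-valued bookkeeping: since the compactness statement is phrased for scalar sequences, one applies it coordinatewise and then passes to a single subsequence valid for all $n$ components at once. I would also note, although it is not needed for the statement as written, that because $v_\eps$ is extended by zero the limit $v_0$ necessarily vanishes for $y\in Y_s$; this is the property that enters later in the derivation of the cell problem~(\ref{EQU:cellProblemDarcy}).
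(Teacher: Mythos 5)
Your proposal is correct and follows essentially the same route as the paper: the bound~(\ref{EQU:APrioriVNeumann}) for $\{v_\eps\}$ and $\{\eps\nabla v_\eps\}$, after extension by zero via Theorem~\ref{THM:Extensions}, is fed into part~(3) of the compactness result Theorem~\ref{THM:Compactness+Para}, yielding exactly the stated two-scale limits. Your additional remarks (coordinatewise application with a common subsequence, and $v_0=0$ on $Y_s$) are correct elaborations of details the paper leaves implicit.
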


\begin{proof}
We consider the estimate~(\ref{EQU:APrioriVNeumann}) in Theorem~\ref{THM:UniformAPrioriEstimatesNeumann} which implies due to Theorem~\ref{THM:Compactness+Para} the existence of the two-scale limit functions.
\end{proof}

The convergence for~$p_\eps$ are standard, see~\cite{Hornung} and we follow directly the procedure there including the right hand side which is due to the electrostatic interaction. Depending on the choice of the scale range, this possibly leads to a coupling of the flow with the electrostatic potential and the concentration fields as stated in the following


\begin{theorem}\label{THM:makro equations V Neumann}
Let $\left(v_\eps, p_\eps, \Phi_\eps,c_\eps^+,c_\eps^-\right)$ be a weak solution of
Problem $P_\eps$ in the sense of Definition~\ref{DEF:weaksolution}. Assume that $c^\pm_\eps$ converge strongly to $c^\pm_0$ in $L^2\left((0,T)\times\Omega\right)$.\\
For $\beta\geq\alpha$ the two-scale limit of $v_\eps$ due to Theorem~\ref{THM:limits V Neumann} satisfies the
following equations:
\begin{align*}
\bar{v}_0 (t,x) &= -K\bigg(\nabla_xp_0(t,x) +
\begin{Bmatrix}
(c^+_0(t,x) - c^-_0(t,x))\nabla_x \tilde{\Phi}_0(t,x), & \beta = \alpha\\
0, & \beta > \alpha
\end{Bmatrix}
\bigg)
&& \text{in } (0,T)\times\Omega,\\
\nabla_x\cdot\bar{v}_0 (t,x) &= 0 && \text{in } (0,T)\times\Omega.
\end{align*}
\end{theorem}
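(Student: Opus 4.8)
The plan is to homogenize the weak Stokes equation~(\ref{EQU:weakStokes}) by two-scale convergence, treating the electrostatic drift as a body force, and then to average over the fluid part $Y_l$ of the cell to obtain Darcy's law with the permeability $K$. First I would rewrite the right-hand side of~(\ref{EQU:weakStokes}) in terms of the rescaled potential $\tilde\Phi_\eps=\eps^\alpha\Phi_\eps$: since $\nabla\Phi_\eps=\eps^{-\alpha}\nabla\tilde\Phi_\eps$, the force becomes $-\int_{\Omega_\eps}\eps^{\beta-\alpha}(c_\eps^+-c_\eps^-)\nabla\tilde\Phi_\eps\cdot\varphi_1\,dx$, so that the sign of $\beta-\alpha\geq 0$ decides whether it persists in the limit. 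As test functions I would take $\varphi_\eps(t,x)=\varphi(t,x,x/\eps)$ with $\varphi\in D((0,T)\times\Omega;C^\infty_{\text{per}}(Y))^n$ vanishing on $\Gamma$, divergence-free in $y$, and with $\nabla_x\cdot\int_{Y_l}\varphi\,dy=0$; this is the admissible class for the $\eps^2$-scaled Stokes operator, matching the a~priori estimate~(\ref{EQU:APrioriVNeumann}), and it makes the pressure drop out.

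Passing to $\eps\to 0$ is the core of the argument. Writing the viscous term as $(\eps\nabla v_\eps)\cdot(\eps\nabla\varphi_\eps)$ and using $\eps\nabla v_\eps\overset{2}{\rightharpoonup}\nabla_y v_0$ from Theorem~\ref{THM:limits V Neumann} together with $\eps\nabla\varphi_\eps\to\nabla_y\varphi$, it converges to $\int_0^T\int_{\Omega\times Y_l}\nabla_y v_0:\nabla_y\varphi$; the pressure term vanishes because $\nabla_y\cdot\varphi=0$ and $\nabla_x\cdot\int_{Y_l}\varphi\,dy=0$, exactly as in the standard treatment~\cite{Hornung}. The decisive step is the force. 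For $\beta>\alpha$ the prefactor $\eps^{\beta-\alpha}\to 0$ combined with the uniform bounds of Theorem~\ref{THM:UniformAPrioriEstimatesNeumann} annihilates it. For $\beta=\alpha$ I would combine the assumed strong convergence $c_\eps^\pm\to c_0^\pm$ in $L^2((0,T)\times\Omega)$ with the two-scale convergence $\nabla\tilde\Phi_\eps\overset{2}{\rightharpoonup}\nabla_x\tilde\Phi_0+\nabla_y\tilde\Phi_1$ of Theorem~\ref{THM:limitsPhiNeumann}, so that the product $(c_\eps^+-c_\eps^-)\nabla\tilde\Phi_\eps$ two-scale converges to $(c_0^+-c_0^-)(\nabla_x\tilde\Phi_0+\nabla_y\tilde\Phi_1)$ and the force tends to $-\int_0^T\int_{\Omega\times Y_l}(c_0^+-c_0^-)(\nabla_x\tilde\Phi_0+\nabla_y\tilde\Phi_1)\cdot\varphi$. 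This passage to the limit in the product of two weakly converging quantities is the main obstacle, and it is precisely where the strong-convergence hypothesis on the concentrations is indispensable.

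It remains to read off the macroscopic law. Here I would exploit the representation $\tilde\Phi_1=\sum_j\phi_j\partial_{x_j}\tilde\Phi_0$ from~(\ref{RepPhi1}): since $c_0^\pm$ and $\nabla_x\tilde\Phi_0$ are $y$-independent, the oscillatory contribution $(c_0^+-c_0^-)\nabla_y\tilde\Phi_1$ is a pure $y$-gradient and, tested against a field $\varphi$ that is divergence-free in $y$ and vanishes on $\Gamma$, integrates to zero after integration by parts; equivalently, it is absorbed into the microscopic pressure and does not drive the averaged flow. Only the macroscopic force $-(c_0^+-c_0^-)\nabla_x\tilde\Phi_0$ survives, so the two-scale limit satisfies a cell Stokes problem with $y$-independent body force $g=-\nabla_x p_0-(c_0^+-c_0^-)\nabla_x\tilde\Phi_0$, where the macroscopic pressure $p_0$ is recovered as the Lagrange multiplier of the constraint $\nabla_x\cdot\bar v_0=0$ in the usual way~\cite{Hornung}. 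By linearity $v_0=\sum_j w_j(y)\,g_j(t,x)$ with $w_j$ the solutions of the permeability cell problems~(\ref{EQU:cellProblemDarcy}); averaging over $Y_l$ and inserting the definition~(\ref{EQU:DefAveragedPermeability}) of $K$ gives $\bar v_0=-K\big(\nabla_x p_0+(c_0^+-c_0^-)\nabla_x\tilde\Phi_0\big)$ when $\beta=\alpha$ and $\bar v_0=-K\nabla_x p_0$ when $\beta>\alpha$, which is the asserted Darcy law.
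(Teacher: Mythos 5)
Your proposal is correct and follows essentially the same route as the paper: the same test class $V_\psi=\{\nabla_y\cdot\psi=0,\ \nabla_x\cdot\int_{Y_l}\psi\,dy=0,\ \psi=0 \text{ on } Y_s\}$, the same strong-times-two-scale passage in the force term for $\beta=\alpha$ (and the vanishing prefactor $\eps^{\beta-\alpha}$ for $\beta>\alpha$), the same absorption of the oscillatory part $(c_0^+-c_0^-)\nabla_y\tilde{\Phi}_1$ into the microscopic pressure (the paper's $\tilde{p}_1=p_1+(c_0^+-c_0^-)\tilde{\Phi}_1$; your integration-by-parts argument against $y$-divergence-free fields is the same move in weak form), and the same representation via the cell problems~(\ref{EQU:cellProblemDarcy}) and averaging~(\ref{EQU:DefAveragedPermeability}). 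The one step you leave implicit --- that $p_0$ is independent of $y$, which the paper establishes first by testing with $\eps\psi\left(t,x,\frac{x}{\eps}\right)$, and which is needed for your claim that the pressure term drops out against $V_\psi$ --- is covered by the standard treatment in~\cite{Hornung} that both you and the paper invoke.
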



\begin{proof}
Choose $\eps\psi\left(t,x,\frac{x}{\eps}\right)$ as test function:
\begin{align*}
\begin{multlined}[][0.9\linewidth]
\int_0^T\int_{\Omega} \eps\nabla v_\eps(t,x)\cdot\eps^2\nabla\psi\left(t,x,\frac{x}{\eps}\right) - p_\eps(t,x)\eps\nabla\cdot \psi\left(t,x,\frac{x}{\eps}\right) \,dx\,dt\\
= \int_0^T\int_{\Omega} -\eps^{\beta+1}(c^+_\eps(t,x) - c^-_\eps(t,x))\nabla \Phi_\eps(t,x)\psi\left(t,x,\frac{x}{\eps}\right) \,dx\,dt.
\end{multlined}
\end{align*}
Passage to the limit leads to
\begin{align*}
\int_0^T\int_{\Omega\times Y} - p_0(t,x,y)\nabla_y\cdot \psi(t,x,y) \,dy\,dx\,dt = 0,
\end{align*}
which gives $p_0(t,x,y) = p_0(t,x)$.\\
We define the space $V_\psi = \{\nabla_y\cdot\psi = 0, \nabla_x\cdot\int_{Y_l}\psi \,dy = 0, \psi = 0 \text{ on } (0,T)\times\Omega\times Y_s\}$ and choose $\psi(t,x,\frac{x}{\eps})\in V_\psi$ as test function:
\begin{align*}
\begin{multlined}[][0.9\linewidth]
\int_0^T\int_{\Omega} \eps\nabla v_\eps(t,x)\cdot\eps\nabla\psi\left(t,x,\frac{x}{\eps}\right) - p_\eps(t,x)\nabla\cdot \psi\left(t,x,\frac{x}{\eps}\right) \,dx\\
= \int_0^T\int_{\Omega} -\eps^\beta (c^+_\eps(t,x) - c^-_\eps(t,x))\nabla \Phi_\eps(t,x)\psi\left(t,x,\frac{x}{\eps}\right) \,dx
\end{multlined}
\end{align*}
Passage to the limit leads to
\begin{align*}
\int_0^T\int_{\Omega\times Y_l} \nabla_y v_0\cdot\nabla_y\psi - p_0\nabla_x\cdot \psi \,dy\,dx
=
\begin{Bmatrix}
\int_0^T\int_{\Omega\times Y_l} -(c^+_0 - c^-_0)(\nabla_x \tilde{\Phi}_0 + \nabla_y \tilde{\Phi}_1)\psi \,dy\,dx, & \beta = \alpha\\
0, & \beta > \alpha
\end{Bmatrix}
\end{align*}
Here we applied that $\psi\in V_\psi$, i.e. $\nabla_y\cdot\psi=0$ holds. The property $p_0=p_0(x)$ yields
\begin{align*}
\int_0^T\int_{\Omega} - p_0(t,x)\nabla_x\cdot \left(\int_{Y_l}\psi(t,x,y) \,dy\right)\,dx\,dt = 0
\end{align*}
Integration by parts inserting the properties of the orthogonal of $V_\psi$ and identification of the pressure $p_0$ as in \cite{Hornung} leads to
\begin{align*}
- \Delta_y v_0 + \nabla_xp_0 + \nabla_yp_1 &=
\begin{Bmatrix}
-(c^+_0 - c^-_0)(\nabla_x \tilde{\Phi}_0 + \nabla_y \tilde{\Phi}_1), & \beta=\alpha\\
0, & \beta > \alpha
\end{Bmatrix}
&& \text{in } (0,T)\times\Omega\times Y_l\\
\nabla_y\cdot v_0 &= 0 && \text{in } (0,T)\times\Omega\times Y_l\\
\nabla_x\cdot\int_{Y_l} v_0 \,dy &= 0 && \text{in } (0,T)\times\Omega\\
\int_{Y_l} v_0 \,dy \cdot\nu &= 0 && \text{on } (0,T)\times\partial\Omega\\
v_0 &= 0 && \text{on } (0,T)\times\Omega\times Y_s
\end{align*}
If $\beta = \alpha$, we define the modified pressure $\tilde{p}_1 = p_1 + (c^+_0 - c^-_0)\tilde{\Phi}_1$ in order to determine a macroscopic extended Darcy's Law. Due to the linearity of the equations $v_0$ can be represented as
\begin{align*}
v_0 (t,x,y)= - \sum_j w_j(y)\bigg(\partial_{x_j}p_0 (t,x) +
\begin{Bmatrix}
(c^+_0(t,x) - c^-_0(t,x))\partial_{x_j}\tilde{\Phi}_0(t,x), & \beta = \alpha\\
0, & \beta > \alpha
\end{Bmatrix}
\bigg)
\end{align*}
with $w_j$ being solutions of the cell problems~(\ref{EQU:cellProblemDarcy}).
We define the averaged velocity field via
\begin{align}\label{EQU:avVelocity}
\bar{v}_0(t,x) &= \int_{Y_l}v_0(t,x,y)\,dy.
\end{align}
which leads, after integration with respect to $y$, to
\begin{align*}
\bar{v}_0 (t,x) &= -K\bigg(\nabla_xp_0(t,x) +
\begin{Bmatrix}
(c^+_0 (t,x) - c^-_0(t,x))\nabla_x \tilde{\Phi}_0(t,x), & \beta = \alpha\\
0, & \beta > \alpha
\end{Bmatrix}
\bigg)
&& \text{in } (0,T)\times\Omega\\
\nabla_x\cdot\bar{v}_0 (t,x) &= 0 && \text{in } (0,T)\times\Omega
\end{align*}
with the permeability tensor~$K$ being defined in~(\ref{EQU:DefAveragedPermeability}).
\end{proof}

\begin{remark}[Modeling of $\bar{v}_0$]\label{REM:limitsVN}
In the case $\beta = \alpha$, we derive an extended incompressible Darcy's law. Besides the pressure gradient, an additional forcing term occurs due to the electrostatic potential. In the case $\beta > \alpha$, the electrostatic potential has no influence on the macroscopic velocity, which is then determined by a standard Darcy's law.
\end{remark}

\subsubsection{Homogenized Limit Problems for the Nernst-Planck Equations}\label{SEC:HomNernstPlanck} 

\begin{theorem}\label{THM:limits c Neumann}
Let the estimates of Theorem~\ref{THM:UniformAPrioriEstimatesNeumann} be valid. Then the
following two-scale limits can be identified for the concentration
fields $c^\pm_\eps$ and their gradients $\nabla c^\pm_\eps$ in the case $\gamma - \alpha \geq 0$:
There exist functions $c^\pm_0(t,x) \in L^2\left((0,T);H^1\left(\Omega\right)\right)$ and $c_1(t,x,y)\in L^2\left((0,T)\times\Omega;H^1_{\text{per}}\left(Y\right)\right)$ such that (up to a subsequence)
\begin{align*}
c^\pm_\eps(t,x) & \rightarrow c^\pm_0(t,x),\\
\nabla c^\pm_\eps(t,x) & \overset{2}{\rightharpoonup} \nabla_xc^\pm_0(t,x) +
\nabla_yc^\pm_1(t,x,y).
\end{align*}
\end{theorem}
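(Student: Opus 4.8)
The plan is to obtain both convergences from the uniform \emph{a priori} bounds of Theorem~\ref{THM:UniformAPrioriEstimatesNeumann} by combining two compactness tools: the two-scale compactness of Theorem~\ref{THM:Compactness+Para} for the weak gradient limit, and an Aubin--Lions--Simon argument for the strong $L^2$ limit. First I would record that, since the estimates of Theorem~\ref{THM:UniformAPrioriEstimatesNeumann} are assumed valid (in particular $\gamma-\alpha\geq 0$, so that estimate~(\ref{EQU:APrioriConcentrationNeumann}) holds), and after applying the extension operator $E$ of Theorem~\ref{THM:Extensions}, the sequences $c^\pm_\eps$ are bounded in $L^2(0,T;H^1(\Omega))$ and $\partial_t c^\pm_\eps$ are bounded in $L^2(0,T;(H^1(\Omega))')$, now on the fixed domain $\Omega$ rather than on the perforated domain $\Omega_\eps$.

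The two-scale statement is then immediate: applying the second item of Theorem~\ref{THM:Compactness+Para} to the $L^2(0,T;H^1(\Omega))$-bounded sequence $c^\pm_\eps$ yields, up to a subsequence, limits $c^\pm_0\in L^2(0,T;H^1(\Omega))$ and $c^\pm_1\in L^2((0,T)\times\Omega;H^1_{\text{per}}(Y))$ with $c^\pm_\eps\overset{2}{\rightharpoonup}c^\pm_0$ and $\nabla c^\pm_\eps\overset{2}{\rightharpoonup}\nabla_x c^\pm_0+\nabla_y c^\pm_1$. I would note that the two-scale limit $c^\pm_0$ produced this way is automatically independent of $y$, which already matches the claimed form of the gradient limit.

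For the strong convergence $c^\pm_\eps\to c^\pm_0$ in $L^2((0,T)\times\Omega)$ I would invoke the Aubin--Lions--Simon lemma on the fixed domain $\Omega$, exploiting the compact embedding $H^1(\Omega)\hookrightarrow\hookrightarrow L^2(\Omega)$ together with the continuous embedding $L^2(\Omega)\hookrightarrow (H^1(\Omega))'$: the bound on $c^\pm_\eps$ in $L^2(0,T;H^1(\Omega))$ and on $\partial_t c^\pm_\eps$ in $L^2(0,T;(H^1(\Omega))')$ give precompactness in $L^2((0,T)\times\Omega)$, hence a strongly convergent (relabeled) subsequence. I would then identify this strong limit with the two-scale limit, using the elementary fact that strong $L^2$ convergence entails two-scale convergence to the same, necessarily $y$-independent, limit; so the two candidates for $c^\pm_0$ coincide and the theorem follows.

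The main obstacle is the strong-convergence step, and specifically the legitimacy of the Aubin--Lions argument across the $\eps$-dependent geometry. The delicate point is that the time-derivative estimate of Theorem~\ref{THM:UniformAPrioriEstimatesNeumann} lives in $(H^1(\Omega_\eps))'$, so I must ensure that after extension $\partial_t E(c^\pm_\eps)=E(\partial_t c^\pm_\eps)$ remains controlled in $(H^1(\Omega))'$ uniformly in $\eps$. This is exactly what Theorem~\ref{THM:Extensions} is designed to supply, since $E$ is linear, time-independent and $\eps$-uniformly bounded; once all functions are transplanted to the fixed domain $\Omega$, the compactness is classical and the remaining bookkeeping is routine.
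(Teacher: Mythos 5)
Your proposal is correct and follows essentially the same route as the paper: the paper's own (one-line) proof likewise obtains the strong convergence from the extension operator of Theorem~\ref{THM:Extensions} combined with the Aubin--Lions lemma, with the gradient statement coming from the \emph{a priori} bound~(\ref{EQU:APrioriConcentrationNeumann}) and item~2 of Theorem~\ref{THM:Compactness+Para}. You in fact supply more detail than the paper does, in particular by flagging the transfer of the $(H^1(\Omega_\eps))'$ bound on $\partial_t c^\pm_\eps$ to the fixed domain, a point the paper leaves implicit in its citation of Theorem~\ref{THM:Extensions}.
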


\begin{proof}
The statement of strong convergence holds true due to the extension of the concentration fields $c^\pm_\eps$ with the properties defined in Theorem~\ref{THM:Extensions} and Aubin-Lions compact embedding lemma.
\end{proof}

\begin{remark}\label{REM:StrongConv}
The strong convergence of the concentrations $c^\pm_\eps$ in $L^2\left(0,T;L^2\left(\Omega\right)\right)$ enables us to pass to the limit
$\eps\rightarrow 0$ also in the convective and drift term of the Nernst-Planck equations (\ref{EQU:weakNernstPlanck}).
\end{remark}

\begin{theorem}\label{THM:makroEquCNeumann}
Let $\left(v_\eps, p_\eps, \Phi_\eps,c_\eps^+,c_\eps^-\right)$ be a weak solution of
Problem $P_\eps$ in the sense of Definition~\ref{DEF:weaksolution}. Assume that $\nabla\Phi_\eps$ and $v_\eps$ two-scale converge as stated in Theorem~\ref{THM:limitsPhiNeumann} and Theorem\ref{THM:limits V Neumann}, respectively.\\
Then the two-scale limits of the concentrations as stated in Theorem~\ref{THM:limits c Neumann} satisfy the
following macroscopic limit equations:
\begin{align*}
|Y_l|\partial_tc^\pm_0(t,x) + \nabla_x\cdot\bigg(\bar{v}_0(t,x)c^\pm_0(t,x) \!-\! D\nabla_x
c^\pm_0(t,x)
\pm
\begin{Bmatrix}
Dc^\pm_0(t,x) \nabla_x\tilde{\Phi}_0(t,x), & \gamma = \alpha\\
0, & \gamma > \alpha
\end{Bmatrix}
\bigg)\\
 = |Y_l|R^\pm_0(c^+_0(t,x), c^-_0(t,x))  \text{ in }\ (0,T)\!\times\!\Omega,\\
\bigg(\bar{v}_0(t,x)c^\pm_0(t,x) - D\nabla_x c^\pm_0(t,x)
\pm
\begin{Bmatrix}
Dc^\pm_0 (t,x) \nabla_x\tilde{\Phi}_0(t,x), & \gamma = \alpha\\
0, & \gamma > \alpha
\end{Bmatrix}
\bigg)
\cdot\nu  = 0  \text{ on }\
(0,T)\!\times\!\partial\Omega,
\end{align*}
\end{theorem}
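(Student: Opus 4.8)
The plan is to pass to the two-scale limit in the weak Nernst--Planck formulation (\ref{EQU:weakNernstPlanck}) using oscillating test functions of the form $\varphi_3(t,x) = \psi_0(t,x) + \eps\psi_1(t,x,x/\eps)$ with $\psi_0 \in D((0,T)\times\Omega)$ and $\psi_1 \in D((0,T)\times\Omega; C^\infty_{\text{per}}(Y))$, whose gradient has the two-scale structure $\nabla_x\psi_0 + \nabla_y\psi_1 + \eps\nabla_x\psi_1$. The first preparatory step is to rewrite the drift term so that the scaling is exposed: since $\tilde{\Phi}_\eps = \eps^\alpha\Phi_\eps$, we have $\eps^\gamma c^\pm_\eps\nabla\Phi_\eps = \eps^{\gamma - \alpha} c^\pm_\eps\nabla\tilde{\Phi}_\eps$, so that the prefactor $\eps^{\gamma-\alpha}$ equals $1$ when $\gamma = \alpha$ and tends to $0$ when $\gamma > \alpha$. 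This is precisely the dichotomy appearing in the statement.

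Next I would pass to the limit term by term. The time-derivative term $\langle\partial_t c^\pm_\eps, \varphi_3\rangle$ converges to $|Y_l|\langle\partial_t c^\pm_0, \psi_0\rangle$ using the uniform bound on $\partial_t c^\pm_\eps$ in $L^2(0,T;(H^1(\Omega_\eps))')$ from Theorem~\ref{THM:UniformAPrioriEstimatesNeumann} together with the strong convergence $c^\pm_\eps \to c^\pm_0$ of Theorem~\ref{THM:limits c Neumann}; the $\eps\psi_1$ contribution drops out. The diffusive term is routine: the convergence $\nabla c^\pm_\eps \overset{2}{\rightharpoonup} \nabla_x c^\pm_0 + \nabla_y c^\pm_1$ yields $\int_{\Omega\times Y_l}(\nabla_x c^\pm_0 + \nabla_y c^\pm_1)\cdot(\nabla_x\psi_0 + \nabla_y\psi_1)$. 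The main obstacle is the two nonlinear products, namely the convective term $v_\eps c^\pm_\eps$ and the drift term $c^\pm_\eps\nabla\tilde{\Phi}_\eps$. Here I would use that the product of a strongly $L^2$-convergent sequence ($c^\pm_\eps \to c^\pm_0$) with a bounded two-scale convergent sequence ($v_\eps \overset{2}{\rightharpoonup} v_0$, respectively $\nabla\tilde{\Phi}_\eps \overset{2}{\rightharpoonup} \nabla_x\tilde{\Phi}_0 + \nabla_y\tilde{\Phi}_1$) two-scale converges to the product of the limits; this is exactly why the strong-convergence hypothesis on $c^\pm_\eps$ is assumed. For $\gamma > \alpha$ the drift simply vanishes in the limit through the $\eps^{\gamma-\alpha}$ factor.

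With the limit variational identity in hand, I would split it by test-function choice. Taking $\psi_0 = 0$ yields, after integration by parts in $y$, the cell problem for $c^\pm_1$. The crucial simplification is that the drift contribution $\pm c^\pm_0(\nabla_x\tilde{\Phi}_0 + \nabla_y\tilde{\Phi}_1)$ is already divergence-free in $y$ and satisfies a homogeneous flux condition on $\Gamma$, because $\tilde{\Phi}_1$ solves its own cell problem (\ref{EQU:CellProb}) (as established in the proof of Theorem~\ref{THM:limitsPhiNeumann}) and $c^\pm_0$ is independent of $y$; likewise the convective term drops out of the cell problem since $v_0$ is $y$-divergence-free and vanishes on $Y_s$. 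Hence $c^\pm_1$ solves exactly the standard cell problem (\ref{EQU:CellProb}) driven by $\nabla_x c^\pm_0$, which gives the representation $c^\pm_1 = \sum_j \phi_j(y)\,\partial_{x_j} c^\pm_0$.

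Finally, choosing $\psi_1 = 0$ and integrating by parts in $x$ produces the macroscopic equation. Averaging the fluxes over $Y_l$ identifies the effective quantities: the diffusive flux $\int_{Y_l}(\nabla_x c^\pm_0 + \nabla_y c^\pm_1)\,dy = D\nabla_x c^\pm_0$ with $D$ as in (\ref{EQU:AvTensor}); the drift flux $\pm c^\pm_0\int_{Y_l}(\nabla_x\tilde{\Phi}_0 + \nabla_y\tilde{\Phi}_1)\,dy = \pm D c^\pm_0\nabla_x\tilde{\Phi}_0$, carrying the \emph{same} tensor $D$ precisely because $\tilde{\Phi}_1$ is built from the same cell functions $\phi_j$; and the convective flux $c^\pm_0\int_{Y_l}v_0\,dy = \bar{v}_0 c^\pm_0$ via (\ref{EQU:avVelocity}). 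Collecting these terms yields the stated macroscopic balance with the porosity factors $|Y_l|$ and reaction term $|Y_l|R^\pm_0$, while the no-flux condition on $\partial\Omega$ comes from the boundary contribution of the $x$-integration by parts. The single genuinely delicate point throughout remains the passage to the limit in the two nonlinear products, which relies entirely on the assumed strong convergence of $c^\pm_\eps$.
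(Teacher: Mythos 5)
Your proposal is correct, and its architecture is the paper's: oscillating test functions $\psi_0(t,x)+\eps\psi_1(t,x,x/\eps)$ in (\ref{EQU:weakNernstPlanck}), the strong convergence $c^\pm_\eps\to c^\pm_0$ to pass to the limit in the two nonlinear products (convective and drift), the choice $\psi_0\equiv 0$ to extract the cell problem and $\psi_1\equiv 0$ to extract the macroscopic balance, and the cell functions $\phi_j$ of (\ref{EQU:CellProb}) to close the averaged fluxes; your rewriting $\eps^\gamma c^\pm_\eps\nabla\Phi_\eps=\eps^{\gamma-\alpha}c^\pm_\eps\nabla\tilde{\Phi}_\eps$ is exactly how the dichotomy $\gamma=\alpha$ versus $\gamma>\alpha$ enters in the paper as well. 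The one genuine difference is your treatment of the drift at the cell level. The paper absorbs it into a modified corrector $\tilde{c}^\pm_1:=c^\pm_1\pm c^\pm_0\tilde{\Phi}_1$ and derives an inhomogeneous cell problem with boundary flux $-\left(\nabla_xc^\pm_0\pm c^\pm_0\nabla_x\tilde{\Phi}_0\right)\cdot\nu$, solved by the representation (\ref{RepC1}); you instead observe that, by the Poisson cell problem (\ref{EQU:ProbPhi1}), the field $\nabla_x\tilde{\Phi}_0+\nabla_y\tilde{\Phi}_1$ is divergence-free in $y$ with vanishing normal trace on $\Gamma$, so (since $c^\pm_0$ is independent of $y$) the drift is annihilated when tested against $\nabla_y\psi_1$, and $c^\pm_1$ itself solves the standard cell problem driven by $\nabla_xc^\pm_0$ alone. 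The two bookkeepings are equivalent: inserting $\tilde{\Phi}_1=\sum_j\phi_j\,\partial_{x_j}\tilde{\Phi}_0$ from (\ref{RepPhi1}) into the paper's $\tilde{c}^\pm_1$ recovers exactly your $c^\pm_1=\sum_j\phi_j\,\partial_{x_j}c^\pm_0$, and both yield the same homogenized fluxes. Your variant buys two things: it makes transparent \emph{why} the identical tensor $D$ multiplies both the diffusive and the drift flux (both averages are computed from the same cell functions $\phi_j$), and it implicitly resolves a notational slip in the paper, where the displayed cell problem and (\ref{RepC1}) are written for $c^\pm_1$ although, given the definition of $\tilde{c}^\pm_1$, they pertain to the modified corrector. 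You also make explicit a point the paper leaves tacit, namely that the convective term drops from the cell problem because $v_0$ is $y$-divergence-free and vanishes on $Y_s$; and your handling of the time-derivative term via the uniform $L^2\left(0,T;(H^1(\Omega_\eps))'\right)$ bound is an acceptable alternative to the paper's integration by parts in time onto the compactly supported $\psi_0$.
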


\begin{proof}
We choose $\varphi_{2,3}=\psi_0(t,x) +
\eps\psi_1(t,x,\frac{x}{\eps})$ as test function in the Nernst-Planck equations~(\ref{EQU:weakNernstPlanck}) and obtain:
\begin{align*}
\begin{multlined}[][0.9\linewidth]
\int_0^T\int_{\Omega} - c^\pm_\eps(t,x) \partial_t\left(\psi_0(t,x)
+ \eps\psi_1\left(t,x,\frac{x}{\eps}\right)\right) + \left(- v_\eps(t,x) c^\pm_\eps(t,x) \right.\hspace*{-1em}\\
+ \left. \nabla c^\pm_\eps(t,x)
\pm \eps^\gamma c^\pm_\eps(t,x)\nabla\Phi_\eps(t,x) \right)
\cdot\nabla\left(\psi_0(t,x)
+ \eps\psi_1\left(t,x,\frac{x}{\eps}\right)\right) \,dx \,dt\\
= \int_0^T\int_{\Omega} R^\pm_\eps(c^+_\eps(t,x), c^-_\eps(t,x)) \left(\psi_0(t,x)
+ \eps\psi_1\left(t,x,\frac{x}{\eps}\right)\right) \,dx \,dt.
\end{multlined}
\end{align*}
Due to Theorem~\ref{THM:limits c Neumann} and Assumption~\ref{ASS:data}, we pass to the two-scale limit~$\eps\rightarrow 0$.
\begin{align*}
\begin{multlined}[][0.9\linewidth]
\int_0^T\int_{\Omega\times Y_l} - c^\pm_0(t,x)\partial_t\psi_0(t,x)
+ \bigg(- v_0(t,x,y)c^\pm_0(t,x)
+  \left(\nabla c^\pm_0(t,x) + \nabla_y c^\pm_1(t,x,y)\right)\\
\pm
\begin{Bmatrix}
c^\pm_0( \nabla_x\tilde{\Phi}_0 + \nabla_y\tilde{\Phi}_1), & \gamma = \alpha\\
0, & \gamma > \alpha
\end{Bmatrix}
\bigg)
\cdot(\nabla_x\psi_0(t,x) \!+\! \nabla_y\psi_1(t,x,y)) \,dy \,dx \,dt
\\
= \int_0^T\!\!\!\int_{\Omega\times Y_l} R^\pm_0(c^+_0(t,x), c^-_0(t,x))\psi_0(t,x) \,dy \,dx \,dt.
\end{multlined}
\end{align*}
In the case $\gamma =\alpha$ we define $\tilde{c}^\pm_1 := c^\pm_1 \pm c^\pm_0\tilde{\Phi}_1$. We choose $\psi_0 \equiv 0$, which leads, after integration by parts
with respect to $y$, to
\begin{align*}
- \Delta_y c^\pm_1(t,x,y) &= 0 && \text{in } (0,T)\times\Omega\times Y_l,\\
\nabla_y c^\pm_1(t,x,y)\cdot\nu &= -\nabla_xc^\pm_0(t,x)
\mp
\begin{Bmatrix}
c^\pm_0(t,x)\nabla_x\tilde{\Phi}_0(t,x)\cdot\nu, & \gamma = \alpha\\
0, & \gamma > \alpha
\end{Bmatrix}
&& \text{on } (0,T)\times\Omega\times \Gamma,\\
c^\pm_1(t,x,y) & && \text{periodic in } y.
\end{align*}
Due to the linearity of the equation, we deduce the following representations for $c^\pm_1$:
\begin{align}\label{RepC1}
c^\pm_1(t,x,y) = \sum_j \varphi_j(y)\partial_{x_j}c^\pm_0(t,x)
\pm
\begin{Bmatrix}
c^\pm_0\partial_{x_j}\tilde{\Phi}_0, & \gamma = \alpha\\
0, & \gamma > \alpha
\end{Bmatrix}
\end{align}
where $\varphi_j$ is the solution of the standard cell problem (\ref{EQU:CellProb}).\\
On the other hand, if we choose $\psi_1(t,x,y) = 0$, we read off the
strong formulation for $c^\pm_0$, after integration by parts with
respect to $x$, and after inserting the representation (\ref{RepC1}) of $c^\pm_1$:
\begin{align*}
|Y_l|\partial_tc^\pm_0(t,x) + \nabla_x\cdot\bigg(\bar{v}_0(t,x)c^\pm_0(t,x) \!-\! D\nabla_x
c^\pm_0(t,x)
\pm
\begin{Bmatrix}
Dc^\pm_0 \nabla_x\tilde{\Phi}_0, & \gamma = \alpha\\
0, & \gamma > \alpha
\end{Bmatrix}
\bigg)\\
 = |Y_l|R^\pm_0(c^+_0(t,x), c^-_0(t,x))  \text{ in }\ (0,T)\!\times\!\Omega,\\
\bigg(\bar{v}_0(t,x)c^\pm_0(t,x) - D\nabla_x c^\pm_0(t,x)
\pm
\begin{Bmatrix}
Dc^\pm_0 \nabla_x\tilde{\Phi}_0, & \gamma = \alpha\\
0, & \gamma > \alpha
\end{Bmatrix}
\bigg)
\cdot\nu  = 0  \text{ on }\
(0,T)\!\times\!\partial\Omega,
\end{align*}
with~$D$ and~$\bar{v}_0$ being defined in~(\ref{EQU:AvTensor}) and~(\ref{EQU:avVelocity}), respectively.
\end{proof}


\begin{remark}[Modeling of $c^\pm_0$]\label{REM:limitsCN}
Mainly two different types of limit equations arise for the macroscopic problem description. In the case $\gamma=\alpha$, the transport of the concentrations is given by Nernst-Planck equations. Thereby the limit $\tilde{\Phi}_0$ of the electrostatic potential and $\bar{v}_0$ are given in Theorem~\ref{THM:limitsPhiNeumann} and Theorem~\ref{THM:limits V Neumann}. The upscaling procedure then yields a fully coupled system of partial differential equation. In the case $\gamma>\alpha$, the electrostatic potential has no direct influence on the macroscopic concentrations. The equations for the concentrations simplify to a convection-diffusion-reaction equation. Depending on the choice of $\beta$, the effective equations might be coupled only in one direction.\\
The two families of cell problems (\ref{EQU:CellProb}) and (\ref{EQU:CellProb}) yield the same solutions and therefore the same macroscopic coefficients (up to the constant parameters that we have suppressed for the ease of presentation).
\end{remark}

\subsection{Dirichlet boundary condition}\label{SEC:UpscalingDirichlet}

\subsubsection{Homogenized Limit Problems for Poisson's Equation}\label{SEC:HomPotentialDirichlet}

We define $\tilde{\Phi}_\eps := \eps^{\alpha-2} \Phi^{\text{hom}}_\eps$ which fulfills the following set of equations:
\begin{align}\label{EQU:Psi}
-\eps^2\Delta \tilde{\Phi}_\eps &= c^+_\eps - c^-_\eps && \text{in } (0,T)\times\Omega_\eps,\\
\tilde{\Phi}_\eps &= 0 && \text{on } (0,T)\times\Gamma_{\eps} ,\\
\eps^2\nabla\tilde{\Phi}_\eps\cdot\nu &= 0 && \text{on } (0,T)\times\partial\Omega.
\end{align}


\begin{theorem}\label{THM:LimitsPhiDirichlet}
Let the \emph{a priori} estimates of Theorem~\ref{THM:UniformAPrioriEstimatesDirichlet} be
valid. Then the following two-scale limits can be identified for the
electrostatic potential $\tilde{\Phi}_\eps$ and the gradient
$\eps\nabla\tilde{\Phi}_\eps$: There exists
$\tilde{\Phi}_0\!\!\in\!
L^2\!\left((0,T)\times\Omega;H^1_{\text{per}}\left(Y\right)\right)$
such that, up to a subsequence,
\begin{align*}
\tilde{\Phi}_\eps(t,x) & \overset{2}{\rightharpoonup} \tilde{\Phi}_0(t,x,y),\\
\eps \nabla\tilde{\Phi}_\eps(t,x) & \overset{2}{\rightharpoonup} \nabla_y\tilde{\Phi}_0(t,x,y).
\end{align*}
\end{theorem}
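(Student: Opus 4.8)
The plan is to reduce the statement to a direct application of the compactness result in Theorem~\ref{THM:Compactness+Para}, specifically its third item, since the strong $\eps^2$ scaling in front of the Laplacian in~(\ref{EQU:Psi}) is precisely the regime in which a sequence together with its $\eps$-scaled gradient two-scale converges to a $Y$-periodic limit and its $y$-gradient. The only genuine work is to translate the \emph{a priori} bound of Theorem~\ref{THM:UniformAPrioriEstimatesDirichlet} into the two uniform bounds required by that compactness statement.

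First I would recall that $\tilde{\Phi}_\eps = \eps^{\alpha-2}\Phi^{\text{hom}}_\eps$ by definition. The estimate~(\ref{EQU:APrioriTranslatedPhiDirichlet}) reads $\eps^{\alpha-2}\|\Phi^{\text{hom}}_\eps\|_{L^2((0,T)\times\Omega_\eps)} + \eps^{\alpha-1}\|\nabla\Phi^{\text{hom}}_\eps\|_{L^2((0,T)\times\Omega_\eps)} \leq C$. The first summand is exactly $\|\tilde{\Phi}_\eps\|_{L^2((0,T)\times\Omega_\eps)} \leq C$. For the gradient term, observe that $\eps\nabla\tilde{\Phi}_\eps = \eps^{\alpha-1}\nabla\Phi^{\text{hom}}_\eps$, so the second summand gives exactly $\|\eps\nabla\tilde{\Phi}_\eps\|_{L^2((0,T)\times\Omega_\eps)} \leq C$. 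After extending by zero to all of $\Omega$ as in Theorem~\ref{THM:Extensions}, both $\{\tilde{\Phi}_\eps\}$ and $\{\eps\nabla\tilde{\Phi}_\eps\}$ are bounded in $L^2((0,T)\times\Omega)$ uniformly in $\eps$. Then I would invoke the third item of Theorem~\ref{THM:Compactness+Para}: from these two uniform bounds it yields, up to a subsequence, a limit $\tilde{\Phi}_0 \in L^2((0,T)\times\Omega;H^1_{\text{per}}(Y))$ with $\tilde{\Phi}_\eps \overset{2}{\rightharpoonup} \tilde{\Phi}_0$ and $\eps\nabla\tilde{\Phi}_\eps \overset{2}{\rightharpoonup} \nabla_y\tilde{\Phi}_0$, which is precisely the assertion.

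I do not expect a serious obstacle here: all the analytic difficulty was already absorbed into the uniform \emph{a priori} estimate of Theorem~\ref{THM:UniformAPrioriEstimatesDirichlet}, and the time variable plays no role beyond being carried along in the integrals. The one point worth emphasizing, rather than any hard step, is the structural contrast with the Neumann case. Here the strong $\eps^2$ scaling forces the limit potential to retain a genuine dependence on the fast variable $y$, so that item~3 of the compactness theorem applies and the gradient limit is purely microscopic, $\nabla_y\tilde{\Phi}_0$; by contrast, in Theorem~\ref{THM:limitsPhiNeumann} the weaker scaling placed $\tilde{\Phi}_\eps$ in $L^2(0,T;H^1(\Omega))$ and thus yielded, via item~2, a macroscopic limit $\tilde{\Phi}_0(t,x)$ together with a corrector $\tilde{\Phi}_1$. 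Keeping track of which compactness item is licensed by the available bound is therefore the only thing that requires care.
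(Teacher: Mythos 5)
Your proof is correct and takes essentially the same route as the paper: rewrite the Dirichlet \emph{a priori} estimate as uniform $L^2$ bounds on $\tilde{\Phi}_\eps$ and $\eps\nabla\tilde{\Phi}_\eps$ (after zero extension to $\Omega$ via Theorem~\ref{THM:Extensions}) and invoke item~3 of the compactness result Theorem~\ref{THM:Compactness+Para}. You are in fact more careful than the paper's own two-line proof, which mistakenly cites the Neumann estimate~(\ref{EQU:APrioriPhiNeumann}) and Theorem~\ref{THM:UniformAPrioriEstimatesNeumann} where it should cite~(\ref{EQU:APrioriTranslatedPhiDirichlet}) of Theorem~\ref{THM:UniformAPrioriEstimatesDirichlet}, exactly the translation you spell out.
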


\begin{proof}
We consider the estimate (\ref{EQU:APrioriPhiNeumann}) in Theorem~\ref{THM:UniformAPrioriEstimatesNeumann} which implies
\begin{align*}
\|\tilde{\Phi}_\eps\|_{L^2\left(\Omega\right)} + \eps\|\nabla\tilde{\Phi}_\eps\|_{L^2\left(\Omega\right)} & \leq C.
\end{align*}
Theorem~\ref{THM:Compactness+Para} then ensures the existence of the two-scale limit functions.
\end{proof}


\begin{theorem}\label{THM:makro equations Phi Dirichlet}
Let $\left(v_\eps, p_\eps, \Phi_\eps,c_\eps^+,c_\eps^-\right)$ be a weak solution of
Problem $P_\eps$ in the sense of Definition~\ref{DEF:weaksolution}. Assume that $c^\pm_\eps$ converge strongly to $c^\pm_0$ in $L^2\left((0,T)\times\Omega\right)$.
Then the two-scale limit of $\tilde{\Phi}_\eps$ due to Theorem~\ref{THM:LimitsPhiDirichlet} satisfies the
following equations:
\begin{align*}
\overline{\tilde{\Phi}}_0 (t,x) = \left(\int_{Y_l} \varphi_j(y) \,dy\right)(c^+_0(t,x) - c^-_0(t,x)).
\end{align*}
\end{theorem}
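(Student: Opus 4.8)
The plan is to mimic the proof of Theorem~\ref{THM:makro equations Phi Neumann}, now using that the rescaled potential $\tilde{\Phi}_\eps = \eps^{\alpha-2}\Phi^{\text{hom}}_\eps$ solves the system~(\ref{EQU:Psi}) with a \emph{homogeneous} Dirichlet condition on $\Gamma_\eps$. First I would record the weak form of~(\ref{EQU:Psi}): testing with $\psi\in H^1(\Omega_\eps)$ that vanishes on $\Gamma_\eps$,
\begin{align*}
\int_0^T\!\!\int_{\Omega_\eps}\eps^2\nabla\tilde{\Phi}_\eps\cdot\nabla\psi \,dx\,dt = \int_0^T\!\!\int_{\Omega_\eps}(c^+_\eps - c^-_\eps)\psi \,dx\,dt,
\end{align*}
where the boundary integral over $\partial\Omega$ drops out by the Neumann condition in~(\ref{EQU:Psi}) and the one over $\Gamma_\eps$ drops out because $\psi$ vanishes there. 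The structural point I would stress at the outset is that, since $\Phi^{\text{hom}}_\eps$ (hence $\tilde{\Phi}_\eps$) vanishes on $\Gamma_\eps$, its extension by zero into the solid part remains in $H^1(\Omega)$ (cf.\ Theorem~\ref{THM:Extensions}); consequently the two-scale limit $\tilde{\Phi}_0$ supplied by Theorem~\ref{THM:LimitsPhiDirichlet} vanishes identically on $Y_s$, and in particular $\tilde{\Phi}_0 = 0$ on $\Gamma$.

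Next I would insert oscillating test functions $\psi\left(t,x,\tfrac{x}{\eps}\right)$ with $\psi\in D\left((0,T)\times\Omega;C^\infty_{\text{per}}(Y)\right)$ vanishing on $\Gamma$, use $\nabla[\psi(t,x,x/\eps)] = \nabla_x\psi + \eps^{-1}\nabla_y\psi$, and split the left-hand side into $\eps^2\nabla\tilde{\Phi}_\eps\cdot\nabla_x\psi$ and $\eps\nabla\tilde{\Phi}_\eps\cdot\nabla_y\psi$. Because $\eps\nabla\tilde{\Phi}_\eps$ is bounded in $L^2$ and two-scale converges to $\nabla_y\tilde{\Phi}_0$ by Theorem~\ref{THM:LimitsPhiDirichlet}, the first term carries a spare factor $\eps$ and vanishes in the limit, while the second converges to $\int_{\Omega\times Y_l}\nabla_y\tilde{\Phi}_0\cdot\nabla_y\psi$. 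Combined with the assumed strong convergence $c^\pm_\eps\to c^\pm_0$ on the right-hand side, this yields the two-scale variational identity
\begin{align*}
\int_0^T\!\!\int_{\Omega\times Y_l}\nabla_y\tilde{\Phi}_0\cdot\nabla_y\psi \,dy\,dx\,dt = \int_0^T\!\!\int_{\Omega\times Y_l}(c^+_0 - c^-_0)\psi \,dy\,dx\,dt,
\end{align*}
valid for all admissible $\psi$.

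From this identity I would read off, for a.e.\ $(t,x)$, the cell problem $-\Delta_y\tilde{\Phi}_0 = c^+_0 - c^-_0$ in $Y_l$ together with $\tilde{\Phi}_0 = 0$ on $\Gamma$ and $y$-periodicity, which is exactly~(\ref{EQU:CellProbDirichlet}) driven by the $y$-independent datum $c^+_0(t,x) - c^-_0(t,x)$. By linearity this gives the separated representation $\tilde{\Phi}_0(t,x,y) = \phi(y)\bigl(c^+_0(t,x) - c^-_0(t,x)\bigr)$ with $\phi$ the solution of~(\ref{EQU:CellProbDirichlet}); integrating over $Y_l$ and writing $\overline{\tilde{\Phi}}_0(t,x) := \int_{Y_l}\tilde{\Phi}_0(t,x,y)\,dy$ then produces the claimed formula. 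I expect the only genuinely delicate point to be the rigorous justification that $\tilde{\Phi}_0$ inherits the homogeneous Dirichlet condition on $\Gamma$, so that~(\ref{EQU:CellProbDirichlet}) is well posed and the representation unique; here the zero-extension argument of Theorem~\ref{THM:Extensions}, together with the fact that the two-scale limit of functions supported in the pore space must vanish on $Y_s$, carries the essential weight.
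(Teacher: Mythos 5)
Your proposal follows essentially the same route as the paper's proof: test the rescaled system~(\ref{EQU:Psi}) with an oscillating function $\psi\left(t,x,\frac{x}{\eps}\right)$, pass to the two-scale limit using Theorem~\ref{THM:LimitsPhiDirichlet} to obtain $\int_{\Omega\times Y_l}\nabla_y\tilde{\Phi}_0\cdot\nabla_y\psi = \int_{\Omega\times Y_l}(c^+_0-c^-_0)\psi$, read off the cell problem~(\ref{EQU:CellProbDirichlet}) with homogeneous Dirichlet data on $\Gamma$, and use linearity plus the $y$-independence of $c^+_0-c^-_0$ to get the separated representation and the averaged formula. The only difference is that you explicitly justify the two points the paper leaves implicit --- that the test functions must vanish on $\Gamma_\eps$ so no boundary term appears, and that $\tilde{\Phi}_0$ inherits the condition $\tilde{\Phi}_0=0$ on $\Gamma$ via the zero extension of Theorem~\ref{THM:Extensions} --- which is a welcome refinement rather than a different argument.
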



\begin{proof}
To prove Theorem~\ref{THM:makro equations Phi Dirichlet} we choose~$\psi_0\left(t,x,\frac{x}{\eps}\right)$ as test function in~(\ref{EQU:Psi}) which leads to
\begin{align*}
\begin{multlined}[][0.9\linewidth]
\int_0^T\int_{\Omega} \eps \nabla\tilde{\Phi}_\eps (t,x) \cdot\nabla\eps\psi\left(t,x,\frac{x}{\eps}\right) \,dx \,dt
=
\int_0^T\int_{\Omega} \left( c_\eps^+(t,x) -
c_\eps^-(t,x) \right) \psi\left(t,x,\frac{x}{\eps}\right) \,dx \,dt.
\end{multlined}
\end{align*}
We then pass to the two-scale limit $\eps\rightarrow 0$ using the properties we have
stated in Theorem~\ref{THM:LimitsPhiDirichlet}:
\begin{align*}
\begin{multlined}[][0.9\linewidth]
\int_0^T\int_{\Omega\times Y_l}\left(\nabla_y\tilde{\Phi}_0(t,x,y)
\cdot \nabla_y\psi(t,x,y)\right) \,dy \,dx \,dt
=
\int_0^T\int_{\Omega\times Y_l}\left(c^+_0(t,x) -
c^-_0(t,x)\right)\psi(t,x) \,dy \,dx \,dt
\end{multlined}
\end{align*}
After integration by parts with
respect to $y$, the strong formulation for $\tilde{\Phi}_0$ may be read off:
\begin{align*}
-\Delta_y\tilde{\Phi}_0 (t,x,y) &= c^+_0(t,x) - c^-_0(t,x) && \text{in } (=,T)\times\Omega\times Y_l,\\
\tilde{\Phi} &= 0 && \text{in } (0,T)\times \Omega\times \Gamma,\\
\tilde{\Phi}_0 & && \text{periodic in } y.
\end{align*}
Inserting the cell problem~(\ref{EQU:CellProbDirichlet}), we get
\begin{align*}
\overline{\tilde{\Phi}}_0 = \int_{Y_l} \tilde{\Phi}_0 \,dy = \left(\int_{Y_l} \varphi \,dy\right)(c^+_0 - c^-_0).
\end{align*}
\end{proof}


\begin{remark}[Modeling of $\Phi_0$]\label{REM:limitsPhiD}
In the case $\alpha = 2$, it follows $\tilde{\Phi}_\eps = \Phi^{\text{hom}}_0 = \Phi_\eps - \Phi_D$ and therefore
\begin{align*}
\overline{\Phi}_0 = \overline{\Phi^{\text{hom}}_0 + \Phi_D} = \int_{Y_l} \Phi^{\text{hom}}_0 + \Phi_D \,dy = \left(\int_{Y_l} \varphi \,dy\right)(c^+_0 - c^-_0) + |Y_l|\Phi_D.
\end{align*}
The macroscopic representation is directly coupled to the macroscopic concentrations $c^\pm_0$. The case $\alpha < 1$ implies that $\Phi_\eps$ and $\nabla\Phi_\eps$ converge to $\Phi_D$ and zero, respectively. However, for any $\alpha$ an effective equation can be derived for the limit $\tilde{\Phi}_0$ of $\tilde{\Phi}_\eps$.
\end{remark}

\subsubsection{Homogenized Limit Problems for Stokes' Equation}\label{SEC:HomVelocityDirichlet}

\begin{theorem}\label{THM:limitsVDirichlet}
Let the \emph{a priori} estimates of Theorem~\ref{THM:UniformAPrioriEstimatesDirichlet} be
valid, i.e. especially $\beta\geq\alpha - 1$. Then the following two-scale limits can be identified for the velocity field $\tilde{v}_\eps$ and the gradient
$\eps\nabla\tilde{v}_\eps$: There exists
$\tilde{v}_0 \in L^2\left((0,T)\times\Omega;H^1_{\text{per}}\left(Y\right)\right)$
such that, up to a subsequence,
\begin{align*}
v_\eps(t,x) & \overset{2}{\rightharpoonup} v_0(t,x,y),\\
\eps \nabla v_\eps(t,x) & \overset{2}{\rightharpoonup} \nabla_yv_0(t,x,y).
\end{align*}
\end{theorem}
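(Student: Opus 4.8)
The plan is to reproduce verbatim the structure of the Neumann velocity argument in Theorem~\ref{THM:limits V Neumann}, since the velocity compactness is insensitive to whether the electrostatic potential carries Neumann or Dirichlet data on $\Gamma_\eps$; the boundary condition for $\Phi_\eps$ enters only through the admissible scaling range, which here reads $\beta \geq \alpha - 1$. First I would invoke the a priori estimate~(\ref{EQU:APrioriVDirichlet}) established in Theorem~\ref{THM:UniformAPrioriEstimatesDirichlet}, which under this hypothesis guarantees the uniform bound
\begin{align*}
\|v_\eps\|_{L^2((0,T)\times\Omega_\eps)} + \eps\|\nabla v_\eps\|_{L^2((0,T)\times\Omega_\eps)} \leq C,
\end{align*}
with $C$ independent of $\eps$.

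Next I would transfer this bound from the perforated domain $\Omega_\eps$ to the fixed domain $\Omega$ by means of the extension result Theorem~\ref{THM:Extensions}: extending $v_\eps$ by zero, the sequences $\{v_\eps\}$ and $\{\eps\nabla v_\eps\}$ become bounded in $L^2((0,T)\times\Omega)$. The two-scale convergence then follows immediately from part~3 of the compactness result Theorem~\ref{THM:Compactness+Para}, which is tailored to exactly this configuration: a sequence bounded in $L^2$ whose $\eps$-scaled gradient is also bounded in $L^2$ admits a limit $v_0 \in L^2((0,T)\times\Omega;H^1_{\text{per}}(Y))$ with $v_\eps \overset{2}{\rightharpoonup} v_0$ and $\eps\nabla v_\eps \overset{2}{\rightharpoonup} \nabla_y v_0$. (I would also silently correct the $\tilde v_\eps,\tilde v_0$ typo in the statement to $v_\eps,v_0$, matching the display.)

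There is, honestly, no genuine obstacle in this theorem as such: all the analytic content was already spent in proving the estimate~(\ref{EQU:APrioriVDirichlet}) inside Theorem~\ref{THM:UniformAPrioriEstimatesDirichlet}, and what remains is a direct application of a stated compactness lemma. The only point worth flagging is why the relevant compactness statement is part~3 of Theorem~\ref{THM:Compactness+Para} rather than part~2: because the viscous term in the Stokes equation is scaled by $\eps^2$, the uniform control is on $\eps\nabla v_\eps$ and \emph{not} on $\nabla v_\eps$ itself, so the velocity behaves like a sequence whose full gradient is unbounded but whose rescaled gradient converges to a \emph{pure $y$-gradient} $\nabla_y v_0$. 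This is precisely the mechanism that collapses the limit velocity into a function periodic in the fast variable and prepares the ground for the Darcy-type cell problem~(\ref{EQU:cellProblemDarcy}) derived in the subsequent theorem.

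Finally, the conceptual reason the admissible range shifts from $\beta \geq \alpha$ (Neumann) to $\beta \geq \alpha - 1$ (Dirichlet) is that the Dirichlet a priori estimate~(\ref{EQU:APrioriTranslatedPhiDirichlet}) controls $\eps^{\alpha-1}\nabla\Phi^{\text{hom}}_\eps$, i.e.\ the potential gradient is one power of $\eps$ larger than in the Neumann estimate~(\ref{EQU:APrioriPhiNeumann}); carrying this extra $\eps$ through the test of the Stokes equation with $v_\eps$ and the Poincaré inequality for zero boundary data relaxes the constraint on $\beta$ by exactly one. I would make this bookkeeping explicit only to the extent of pointing to~(\ref{EQU:APrioriTranslatedPhiDirichlet}), and otherwise keep the proof as a two-line invocation of the existing estimate and the compactness theorem.
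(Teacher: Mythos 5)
Your proposal is correct and coincides with the paper's own (implicit) argument: the paper gives no separate proof of this theorem, relying on the same two-step reasoning as for Theorem~\ref{THM:limits V Neumann}, namely the \emph{a priori} estimate~(\ref{EQU:APrioriVDirichlet}) from Theorem~\ref{THM:UniformAPrioriEstimatesDirichlet} combined, after zero extension via Theorem~\ref{THM:Extensions}, with part~3 of the compactness result Theorem~\ref{THM:Compactness+Para}. Your added glosses---why part~3 rather than part~2 applies (only $\eps\nabla v_\eps$, not $\nabla v_\eps$, is uniformly bounded, forcing a pure $\nabla_y$-limit), and why the admissible range shifts to $\beta\geq\alpha-1$ via the extra power of $\eps$ in~(\ref{EQU:APrioriTranslatedPhiDirichlet})---are accurate commentary rather than deviations.
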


The convergence for~$p_\eps$ is standard, see~\cite{Hornung} and we follow directly the procedure there including the right hand side which is due to the electrostatic interaction.


\begin{theorem}\label{THM:makro equations V Dirichlet}
Let $\left(v_\eps, p_\eps, \Phi_\eps,c_\eps^+,c_\eps^-\right)$ be a weak solution of
Problem $P_\eps$ in the sense of Definition~\ref{DEF:weaksolution}. Assume that $c^\pm_\eps$ converge strongly to $c^\pm_0$ in $L^2\left((0,T)\times\Omega\right)$.\\
For $\beta\geq\alpha-1$ the two-scale limit of $v_\eps$ due to Theorem~\ref{THM:limitsVDirichlet} satisfies the
following equations:
\begin{align*}
\bar{v}_0 (t,x) &= \int_{Y_l}v_0(t,x,y) \,dy = -K\nabla_xp_0(t,x)
&& \text{in } (0,T)\times\Omega,\\
\nabla_x\cdot\bar{v}_0 (t,x) &= 0 && \text{in } (0,T)\times\Omega.
\end{align*}
\end{theorem}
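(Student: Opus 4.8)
The plan is to reproduce the two-scale argument used for Theorem~\ref{THM:makro equations V Neumann}, the decisive difference lying in the scaling of the electrostatic forcing. First I would collect the relevant limits: by Theorem~\ref{THM:limitsVDirichlet} we have $v_\eps \overset{2}{\rightharpoonup} v_0$ and $\eps\nabla v_\eps \overset{2}{\rightharpoonup} \nabla_y v_0$, the pressure $p_\eps$ converges as in \cite{Hornung}, and by Theorem~\ref{THM:LimitsPhiDirichlet} we have $\eps\nabla\tilde{\Phi}_\eps \overset{2}{\rightharpoonup} \nabla_y\tilde{\Phi}_0$ together with the assumed strong convergence $c^\pm_\eps \to c^\pm_0$. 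Since $\Phi_D$ is constant, the definition $\tilde{\Phi}_\eps = \eps^{\alpha-2}\Phi^{\text{hom}}_\eps$ with $\Phi^{\text{hom}}_\eps = \Phi_\eps - \Phi_D$ gives $\nabla\Phi_\eps = \eps^{2-\alpha}\nabla\tilde{\Phi}_\eps$, so the right-hand side of (\ref{EQU:weakStokes}) reads $-\eps^{\beta+2-\alpha}(c^+_\eps - c^-_\eps)\nabla\tilde{\Phi}_\eps$.

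Next I would test (\ref{EQU:weakStokes}) with $\eps\psi(t,x,\frac{x}{\eps})$. Writing the forcing as $-\eps^{\beta+2-\alpha}(c^+_\eps - c^-_\eps)\,(\eps\nabla\tilde{\Phi}_\eps)\cdot\psi$ and using that $\eps\nabla\tilde{\Phi}_\eps$ is bounded in $L^2$, the forcing is of order $\eps^{\beta+2-\alpha}$, which tends to zero because $\beta\geq\alpha-1$. The viscous contribution vanishes as well, and the remaining terms yield $\int_0^T\!\int_{\Omega\times Y} -p_0\,\nabla_y\cdot\psi\,dy\,dx\,dt = 0$, hence $p_0 = p_0(t,x)$, exactly as in the Neumann case. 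Then I would test with $\psi(t,x,\frac{x}{\eps})\in V_\psi$; here the forcing is of order $\eps^{\beta+1-\alpha}$, so it vanishes when $\beta > \alpha-1$ and, in the critical case $\beta = \alpha-1$, converges by the strong convergence of $c^\pm_\eps$ and the two-scale convergence of $\eps\nabla\tilde{\Phi}_\eps$ to $\int_0^T\!\int_{\Omega\times Y_l} -(c^+_0 - c^-_0)\nabla_y\tilde{\Phi}_0\cdot\psi\,dy\,dx\,dt$.

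This critical case is the main obstacle and the essential point of the theorem. Unlike the Neumann situation, the surviving forcing is a \emph{pure} $y$-gradient: because $c^\pm_0$ are independent of $y$, one has $-(c^+_0 - c^-_0)\nabla_y\tilde{\Phi}_0 = -\nabla_y\bigl((c^+_0 - c^-_0)\tilde{\Phi}_0\bigr)$. It can therefore be absorbed into the cell pressure by setting $\tilde{p}_1 := p_1 + (c^+_0 - c^-_0)\tilde{\Phi}_0$, and it contributes nothing to the macroscopic balance. This is precisely why no electrostatic term appears in the effective Darcy law, in contrast to Theorem~\ref{THM:makro equations V Neumann}, where the limit potential carried a genuine $\nabla_x$-part that survived the averaging.

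Finally, with the forcing eliminated or absorbed, the two-scale system reduces to the standard Stokes cell problem $-\Delta_y v_0 + \nabla_x p_0 + \nabla_y\tilde{p}_1 = 0$, $\nabla_y\cdot v_0 = 0$ in $Y_l$, $v_0 = 0$ on $Y_s$, together with $\nabla_x\cdot\int_{Y_l} v_0\,dy = 0$. By linearity $v_0(t,x,y) = -\sum_j w_j(y)\,\partial_{x_j} p_0(t,x)$ with $w_j$ solving the cell problems (\ref{EQU:cellProblemDarcy}); averaging over $Y_l$ and invoking the definition (\ref{EQU:DefAveragedPermeability}) of $K$ gives $\bar{v}_0 = -K\nabla_x p_0$, while $\nabla_x\cdot\bar{v}_0 = 0$ follows directly from the cell-problem constraint, which completes the argument.
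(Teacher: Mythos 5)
Your proposal is correct and follows essentially the same route as the paper: testing with $\eps\psi$ to obtain $p_0=p_0(t,x)$, testing with $\psi\in V_\psi$ to identify the limit forcing via the case distinction $\beta=\alpha-1$ versus $\beta>\alpha-1$, and absorbing the surviving pure $y$-gradient forcing $-(c^+_0-c^-_0)\nabla_y\tilde{\Phi}_0$ into the modified cell pressure $\tilde{p}_1 = p_1 + (c^+_0-c^-_0)\tilde{\Phi}_0$, which yields the standard Darcy law. Your explicit bookkeeping of the scaling exponents (via $\nabla\Phi_\eps = \eps^{2-\alpha}\nabla\tilde{\Phi}_\eps$, giving orders $\eps^{\beta+2-\alpha}$ and $\eps^{\beta+1-\alpha}$ for the two test-function choices) is a welcome elaboration of steps the paper leaves implicit by referring back to the Neumann case.
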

%
%
\begin{proof}
Choosing $\eps\psi(x,\frac{x}{\eps})$ as test function, it follows analogously to the proof of Theorem~\ref{THM:makro equations V Neumann} that $p_0=p_0(x)$ holds.\\
Defining the space $V_\psi = \{\nabla_y\cdot\psi = 0, \nabla_x\cdot\int\psi \,dy = 0, \psi = 0 \text{ on } (0,T)\times\Omega\times Y_s\}$
and choosing $\psi(x,\frac{x}{\eps})\in V_\psi$ as test function, leads in the limit $\eps\rightarrow 0$ to
\begin{align*}
\int_0^T\int_{\Omega\times Y_l} \nabla_y v_0\cdot\nabla_y\psi - p_0\nabla_x\cdot \psi \,dy\,dx\,dt =
\begin{Bmatrix}
\int_0^T\int_{\Omega\times Y_l} -(c^+_0 - c^-_0)\nabla_y \Phi_0\psi \,dy\,dx\,dt, & \beta = \alpha-1 \\
0,  & \beta > \alpha-1
\end{Bmatrix}
\end{align*}
We now follow the proof of Theorem~\ref{THM:makro equations V Neumann}. Finally, integration by parts results in
\begin{align*}
- \Delta_y v_0 (t,x,y) + \nabla_xp_0(t,x) + \nabla_yp_1(t,x,y) &=
\begin{Bmatrix}
-(c^+_0 (t,x)- c^-_0(t,x))\nabla_y \tilde{\Phi}_0(t,x,y),  & \beta = \alpha-1 \\
0, & \beta > \alpha-1
\end{Bmatrix}
\end{align*}
In the case $\beta = \alpha -1$, we define the modified pressure $\tilde{p}_1 = p_1 + (c^+_0 - c^-_0)\tilde{\Phi}_0$. This allows to determine a standard incompressible Darcy's Law and finishes the proof of Theorem~\ref{THM:makro equations V Dirichlet}.
\end{proof}

\begin{remark}[Modeling of $\bar{v}_0$]\label{REM:limitsVD}
The fluid flow is determined by a standard Darcy's law. The is no direct coupling to the electrostatic potential, since it is only present in the modified pressure term $\tilde{p}_1$.
\end{remark}

\subsubsection{Homogenized Limit Problems for the Nernst-Planck Equations}\label{SEC:HomNernstPlanck}

\begin{theorem}\label{THM:limits c Dirichlet}
Let the estimates of Theorem~\ref{THM:UniformAPrioriEstimatesDirichlet} be valid. Then the
following two-scale limits can be identified for the concentration
fields $c^\pm_\eps$ and their gradients $\nabla c^\pm_\eps$:
There exist functions $c_0(t,x) \in L^2\left((0,T);H^1\left(\Omega\right)\right)$ and $c_1(t,x,y)\in L^2\left((0,T)\times\Omega;H^1_{\text{per}}\left(Y\right)\right)$ such that (up to a subsequence)
\begin{align*}
c^\pm_\eps(t,x) & \rightarrow c_0(t,x),\\
\nabla c^\pm_\eps(t,x) & \overset{2}{\rightharpoonup} \nabla_xc_0(t,x) +
\nabla_yc^\pm_1(t,x,y).
\end{align*}
\end{theorem}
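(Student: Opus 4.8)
The plan is to follow exactly the argument of Theorem~\ref{THM:limits c Neumann}, since in the Dirichlet setting the concentration fields obey the same type of uniform bounds, now furnished by the estimate~(\ref{EQU:APrioriConcentrationDirichlet}) of Theorem~\ref{THM:UniformAPrioriEstimatesDirichlet}, which is available precisely when $\gamma - \alpha + 1 \geq 0$. First I would pass to the extended fields of Theorem~\ref{THM:Extensions}, so that on the fixed domain $\Omega$ the sequences $\{c^\pm_\eps\}$ are bounded in $L^2(0,T;H^1(\Omega))$ and $\{\partial_t c^\pm_\eps\}$ are bounded in $L^2(0,T;(H^1(\Omega))')$, both uniformly in $\eps$.

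The strong convergence $c^\pm_\eps \to c_0$ in $L^2((0,T)\times\Omega)$ then follows from the Aubin--Lions compact embedding lemma: the embedding $H^1(\Omega) \hookrightarrow L^2(\Omega)$ is compact while $L^2(\Omega) \hookrightarrow (H^1(\Omega))'$ is continuous, so the bounds above render $\{c^\pm_\eps\}$ precompact in $L^2(0,T;L^2(\Omega))$, and a subsequence converges strongly to a limit $c_0$. For the gradients I would apply part~2 of Theorem~\ref{THM:Compactness+Para} to the bounded sequence $\{c^\pm_\eps\}$ in $L^2(0,T;H^1(\Omega))$; this produces $c_0 \in L^2(0,T;H^1(\Omega))$ together with correctors $c^\pm_1 \in L^2((0,T)\times\Omega;H^1_{\text{per}}(Y))$ such that $c^\pm_\eps \overset{2}{\rightharpoonup} c_0$ and $\nabla c^\pm_\eps \overset{2}{\rightharpoonup} \nabla_x c_0 + \nabla_y c^\pm_1$.

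It remains to reconcile the two notions of limit: since a strongly $L^2$-convergent sequence two-scale converges to its (necessarily $y$-independent) strong limit, the two-scale limit of $c^\pm_\eps$ coincides with the $c_0$ delivered by Aubin--Lions. This identification simultaneously fixes the limit appearing in the gradient formula and upgrades the regularity of $c_0$ to $L^2(0,T;H^1(\Omega))$, completing the statement.

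There is no essential new obstacle relative to the Neumann case: the compactness machinery is identical, and the conclusion is in effect a corollary of Theorem~\ref{THM:Compactness+Para} combined with Aubin--Lions. The only case-specific point is the time-derivative bound feeding into Aubin--Lions, where the drift contribution $\eps^\gamma c^\pm_\eps \nabla\Phi_\eps$ must be controlled uniformly in $\eps$; this is exactly where the Dirichlet scaling threshold $\gamma - \alpha + 1 \geq 0$ enters, through the weaker potential estimate $\eps^{\alpha-1}\|\nabla\Phi_\eps\|_{L^2(\Omega)} \leq C$ of Theorem~\ref{THM:UniformAPrioriEstimatesDirichlet} rather than the $\eps^{\alpha}$-bound available in the Neumann setting. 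Since this control is already embedded in the cited a~priori estimate, the proof amounts to reassembling these ingredients.
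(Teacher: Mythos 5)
Your proposal is correct and follows essentially the same route as the paper: the paper's own (very terse) proof likewise invokes the extension operator of Theorem~\ref{THM:Extensions} together with the Aubin--Lions lemma for the strong convergence, with the gradient statement coming from the \emph{a priori} bounds of Theorem~\ref{THM:UniformAPrioriEstimatesDirichlet} fed into part~2 of Theorem~\ref{THM:Compactness+Para}, exactly as you do. Your write-up merely makes explicit the details the paper leaves implicit (the embedding triple for Aubin--Lions, the identification of the strong limit with the two-scale limit, and the role of the threshold $\gamma-\alpha+1\geq 0$ in the time-derivative bound), all of which are consistent with the paper's argument.
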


\begin{proof}
The statement of strong convergence holds true due to the extension of the concentration fields $c^\pm_\eps$ with the properties defined in Theorem~\ref{THM:Extensions} and Aubin-Lions compact embedding lemma.
\end{proof}

\begin{remark}\label{REM:StrongConv}
The strong convergence of the concentrations~$c^\pm_\eps$ in~$L^2\left(0,T;L^2\left(\Omega\right)\right)$ enables us to pass to the limit
$\eps\rightarrow 0$ also in the convective and drift term of the Nernst-Planck equations~(\ref{EQU:weakNernstPlanck}).
\end{remark}

\begin{theorem}\label{THM:makroEquCDirichlet}
Let $\left(v_\eps, p_\eps, \Phi_\eps,c_\eps^+,c_\eps^-\right)$ be a weak solution of
Problem $P_\eps$ in the sense of Definition~\ref{DEF:weaksolution}. Assume that $\nabla\Phi_\eps$ and $v_\eps$ two-scale converges as stated in Theorem~\ref{THM:LimitsPhiDirichlet} and Theorem~\ref{THM:limitsVDirichlet}.
Then the two-scale limits of the concentrations as stated in Theorem~\ref{THM:limits c Dirichlet} satisfy the
following macroscopic limit equations:
\begin{align*}
|Y_l|\partial_tc^\pm_0(t,x) + \nabla_x\cdot\left(\bar{v}_0(t,x)c^\pm_0(t,x) - D\nabla_x
c^\pm_0(t,x)\right)
&=
|Y_l|R^\pm_0(c^+_0(t,x), c^-_0(t,x))
&& \text{in } (0,T)\times\Omega,\\
\left(\bar{v}_0(t,x)c^\pm_0(t,x) - D\nabla_x c^\pm_0(t,x)\right)\cdot\nu &= 0 && \text{on }
(0,T)\times\partial\Omega.
\end{align*}
\end{theorem}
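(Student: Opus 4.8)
The plan is to follow the two-scale unfolding strategy already used in the proof of Theorem~\ref{THM:makroEquCNeumann}, testing the weak Nernst-Planck equation~(\ref{EQU:weakNernstPlanck}) with the oscillating test function $\varphi_{3}=\psi_0(t,x)+\eps\psi_1(t,x,\frac{x}{\eps})$ and passing to the two-scale limit. The decisive structural difference to the Neumann case lies in the scaling of the potential: since $\Phi_D$ is constant one has $\nabla\Phi_\eps=\nabla\Phi^{\text{hom}}_\eps=\eps^{2-\alpha}\nabla\tilde{\Phi}_\eps$, so that the drift contribution to the flux rewrites as $\pm\eps^\gamma c^\pm_\eps\nabla\Phi_\eps=\pm\eps^{\gamma-\alpha+1}c^\pm_\eps\,(\eps\nabla\tilde{\Phi}_\eps)$. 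By Theorem~\ref{THM:LimitsPhiDirichlet} the factor $\eps\nabla\tilde{\Phi}_\eps$ two-scale converges to the purely microscopic gradient $\nabla_y\tilde{\Phi}_0$, while $c^\pm_\eps\to c^\pm_0$ strongly by Theorem~\ref{THM:limits c Dirichlet}. Hence for $\gamma>\alpha-1$ the prefactor $\eps^{\gamma-\alpha+1}$ forces the drift to vanish in the limit, whereas in the critical case $\gamma=\alpha-1$ it survives as $\pm c^\pm_0\nabla_y\tilde{\Phi}_0$. The convective term $v_\eps c^\pm_\eps$ and the diffusive term $\nabla c^\pm_\eps$ are treated exactly as before, using Theorem~\ref{THM:limitsVDirichlet} and $\nabla c^\pm_\eps\overset{2}{\rightharpoonup}\nabla_x c^\pm_0+\nabla_y c^\pm_1$.

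The substantive point is the critical case, where I must show that the surviving microscopic drift produces no effective drift but is absorbed into the diffusive corrector. First I would set $\psi_0\equiv 0$ and integrate by parts in $y$ to read off the cell problem for $c^\pm_1$,
\begin{align*}
-\nabla_y\cdot\bigl(\nabla_x c^\pm_0+\nabla_y c^\pm_1\pm c^\pm_0\nabla_y\tilde{\Phi}_0\bigr)=0 \quad\text{in } Y_l,
\end{align*}
with no-flux condition $(\nabla_x c^\pm_0+\nabla_y c^\pm_1\pm c^\pm_0\nabla_y\tilde{\Phi}_0)\cdot\nu=0$ on $\Gamma$ and periodicity in $y$; the convective term drops out of this $y$-problem because $\nabla_y\cdot v_0=0$ and $v_0\cdot\nu=0$ on $\Gamma$. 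Next I would introduce the shifted corrector $\tilde{c}^\pm_1:=c^\pm_1\pm c^\pm_0\tilde{\Phi}_0$. Invoking that $\tilde{\Phi}_0$ solves the Dirichlet cell problem of Theorem~\ref{THM:makro equations Phi Dirichlet}, i.e.~$-\Delta_y\tilde{\Phi}_0=c^+_0-c^-_0$ in $Y_l$ and $\tilde{\Phi}_0=0$ on $\Gamma$, a short computation shows that the interior source and the awkward boundary contribution cancel exactly, so that $\tilde{c}^\pm_1$ satisfies the \emph{standard} homogeneous cell problem~(\ref{EQU:CellProb}):
\begin{align*}
-\Delta_y\tilde{c}^\pm_1=0 \text{ in } Y_l, \qquad \nabla_y\tilde{c}^\pm_1\cdot\nu=-\nabla_x c^\pm_0\cdot\nu \text{ on } \Gamma.
\end{align*}
By linearity this gives the representation $\tilde{c}^\pm_1=\sum_j\varphi_j(y)\,\partial_{x_j}c^\pm_0(t,x)$.

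Finally I would set $\psi_1\equiv 0$ and integrate by parts in $x$ to extract the macroscopic equation. Writing the total flux through the shifted corrector, $\nabla_x c^\pm_0+\nabla_y c^\pm_1\pm c^\pm_0\nabla_y\tilde{\Phi}_0=\nabla_x c^\pm_0+\nabla_y\tilde{c}^\pm_1$, and averaging over $Y_l$ reproduces precisely the homogenized diffusion tensor $D$ of~(\ref{EQU:AvTensor}), with no explicit drift term remaining. Equivalently, the naive macroscopic drift $\pm c^\pm_0\int_{Y_l}\nabla_y\tilde{\Phi}_0\,dy$ vanishes by the divergence theorem together with $\tilde{\Phi}_0=0$ on $\Gamma$ and $Y$-periodicity, but the rewriting via $\tilde{c}^\pm_1$ is cleaner since it simultaneously restores the standard diffusion tensor. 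The convective term averages to $\bar{v}_0 c^\pm_0$ with $\bar{v}_0$ from Theorem~\ref{THM:makro equations V Dirichlet}, the time-derivative and reaction terms each pick up the volume factor $|Y_l|$, and the linear reaction rate of Assumption~\ref{ASS:data} passes to the limit by the strong convergence of $c^\pm_\eps$.

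The step I expect to be the main obstacle is the critical case $\gamma=\alpha-1$: verifying that the microscopic drift $\pm c^\pm_0\nabla_y\tilde{\Phi}_0$ is absorbed exactly by the substitution $\tilde{c}^\pm_1=c^\pm_1\pm c^\pm_0\tilde{\Phi}_0$, so that the effective equation carries neither a drift term nor a modified diffusion tensor. This is what sharply distinguishes the Dirichlet situation from the Neumann case of Theorem~\ref{THM:makroEquCNeumann}, in which the potential retains a genuine macroscopic gradient $\nabla_x\tilde{\Phi}_0$ that cannot be absorbed and does appear in the limit.
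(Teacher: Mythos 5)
Your proposal is correct and follows essentially the same route as the paper's proof: testing with $\psi_0+\eps\psi_1$, passing to the two-scale limit where the drift survives only for $\gamma=\alpha-1$ as the purely microscopic term $\pm c^\pm_0\nabla_y\tilde{\Phi}_0$, and absorbing it via the shifted corrector $\tilde{c}^\pm_1=c^\pm_1\pm c^\pm_0\tilde{\Phi}_0$ so that the standard cell problem~(\ref{EQU:CellProb}) and diffusion tensor $D$ reappear. If anything, your write-up is slightly more careful than the paper's, whose displayed definition of $\tilde{c}^\pm_1$ contains a typographical slip (it writes $c^\pm_0\nabla_y\Phi_0$ where the scalar $c^\pm_0\tilde{\Phi}_0$ is meant, exactly as you have it), and your observation that $\int_{Y_l}\nabla_y\tilde{\Phi}_0\,dy=0$ gives a useful independent check that no effective drift can remain.
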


\begin{proof}
We choose $\phi_3= \psi_0(t,x) + \eps\psi_1 \left(t,x,\frac{x}{\eps}\right)$ as test functions in the Nernst-Planck equations~(\ref{EQU:weakNernstPlanck}).
\begin{align*}
\begin{multlined}[][0.9\linewidth]
\int_0^T\int_{\Omega}c^\pm_\eps(t,x)\partial_t\left(\psi_0(t,x) + \eps\psi_1 \left(t,x,\frac{x}{\eps}\right) \right) + \left(v_\eps(t,x)c^\pm_\eps(t,x) + \nabla c^\pm_\eps(t,x)
\pm \eps^\gamma c^\pm_\eps\nabla\Phi_\eps(t,x)\right)\\
 \cdot\nabla \left(\psi_0(t,x) + \eps\psi_1 \left(t,x,\frac{x}{\eps}\right) \right) \,dx \,dt
\\
 = \int_0^T\int_{\Omega} R^\pm_\eps(c^+_\eps(t,x), c^-_\eps(t,x))\left(\psi_0(t,x) + \eps\psi_1 \left(t,x,\frac{x}{\eps}\right)\right) \,dx \,dt.
\end{multlined}
\end{align*}
Passage to the limit $\eps\rightarrow 0$ yields
\begin{align*}
\begin{multlined}[][0.9\linewidth]
\int_0^T\!\!\!\int_{\Omega\times Y}\!\!\!\! - c^\pm_0(t,x)\partial_t\psi_0(t,x) \!+\! ( -v_0(t,x,y)c^\pm_0(t,x) \!+\! (\nabla c^\pm_0(t,x) \!+\! \nabla_y c^\pm_1(t,x,y)) \hspace*{-1em}\\
\pm
\begin{Bmatrix}
c^\pm_0\nabla_y\Phi_0(t,x,y) ) \cdot(\nabla_x\psi_0(t,x) \!+\! \nabla_y\psi_1(t,x,y)) \,dy \,dx \,dt, & \gamma=\alpha-1\\
0, & \gamma > \alpha-1
\end{Bmatrix}\\
= \int_0^T\!\!\!\int_{\Omega\times Y} R^\pm_0(c^+_0(t,x), c^-_0(t,x))\psi_0(t,x) \,dy \,dx \,dt.
\end{multlined}
\end{align*}
We define
\begin{align*}
\tilde{c}^\pm_1 = c^\pm_1
\pm
\begin{Bmatrix}
c^\pm_0\nabla_y\Phi_0, & \gamma = \alpha - 1\\
0, & \gamma > \alpha - 1
\end{Bmatrix}
\end{align*}
and choose $\psi_0 \equiv 0$, which leads, after integration by parts
with respect to $y$ to:
\begin{align*}
- \Delta_y \tilde{c}^\pm_1(t,x,y) &= 0 && \text{in } (0,T)\times\Omega\times Y_l,\\
\nabla_y \tilde{c}^\pm_1(t,x,y)\cdot\nu &= -\nabla_xc^\pm_0(t,x)\cdot\nu && \text{on } (0,T)\times\Omega\times \Gamma,\\
\tilde{c}^\pm_1(t,x,y) & &&
\text{periodic in } y.
\end{align*}
The linearity of the equation yields (\ref{RepC1}) as representations for $\tilde{c}^\pm_1$ supplemented by the family of cell problems (\ref{EQU:CellProb}).

On the other hand, if we choose $\psi_1(t,x,y) = 0$, we read off the
strong formulation for $c^\pm_0$ after integration by parts with
respect to $x$ and after inserting the representation (\ref{RepC1}) of $\tilde{c}^\pm_1$:
\begin{align*}
|Y_l|\partial_tc^\pm_0(t,x) + \nabla_x\cdot(\bar{v}_0(t,x)c^\pm_0(t,x) - D\nabla_x
c^\pm_0(t,x)
&= |Y_l|R^\pm_0(c^+_0(t,x), c^-_0(t,x)) && \text{in } (0,T)\times\Omega,\\
(-\bar{v}_0(t,x)c^\pm_0(t,x) + D\nabla_x c^\pm_0(t,x) )\cdot\nu &= 0 && \text{on }
(0,T)\times\partial\Omega,
\end{align*}
with $\bar{v}_0, D$ being defined in (\ref{EQU:AvTensor}) and (\ref{EQU:avVelocity}), respectively.
\end{proof}


\begin{remark}[Modeling of $c^\pm_0$]\label{REM:limitsCD}
The transport of the concentrations is determined by a convection-diffusion-reaction equation. The is no direct coupling to the electrostatic potential, since it is only present in the modified higher order concentration term~$\tilde{c}_1$. Depending on the choice of~$\beta$, the effective equations might be coupled only in one direction.
\end{remark}

\section{Discussion}\label{SEC:Discussion}
%
We wish to point out the following aspects:\\
In Section~\ref{SEC:TwoScaleConv}, we considered the
rigorous passage to the two-scale limit $\eps\rightarrow 0$ for
different boundary conditions of the electrostatic potential and different ranges of the scale parameter
$\left(\alpha, \beta ,\gamma\right)$ and have derived the
corresponding two-scale limits of Problem $P_\eps$. We
classified \textsl{conceptually different types of limit systems}.
In all cases, auxiliary cell problems need to be solved to be able to
provide closed-form expressions for the effective macroscopic
coefficients. Depending on chosen model, the macroscopic problem is coupled only in one direction or fully coupled. Solving these problems numerically is computationally challenging due to the mass balances that have to be fulfilled and the diverse boundary conditions, especially periodic ones. The most crucial point is that an appropriate fixed point iteration has to be constructed depending on the nature of the nonlinear couplings. Moreover, corrector estimates will be needed in order to make it possible to compare the effective solutions/problem descriptions with the oscillatory solution/microscopic model. The different structures of the resulting effective equations of the homogenization process are underlined in
Remark~\ref{REM:limitsPhiN}, Remark~\ref{REM:limitsVN}, Remark~\ref{REM:limitsCN} for Neumann boundary conditions for the electrostatic potential (i.\,e. given surface charge) and in Remark~\ref{REM:limitsPhiD}, Remark~\ref{REM:limitsVD}, Remark~\ref{REM:limitsCD} for Dirichlet boundary conditions for the electrostatic potential (i.\,e. given $\zeta$ potential). In the colloid literature, one can also find the so called perfect sink
boundary condition for the concentration fields instead of the no-penetration boundary condition, i.e. $c^\pm_\eps = 0$ on
$(0,T)\times\Gamma_\eps$. In the framework of homogenization this
would lead together with the strong convergence of the concentration
fields to $c^\pm_0\equiv 0$ as limit. Obviously, this does not provide a suitable model for colloidal transport phenomena.

The following question arises naturally: \textsl{Given a particular scenario of colloidal transport in the soil, which is the best/most reasonable mathematical (limit) model that should be considered?} Answering this question is not limited to choosing the precise
values for the choice of the appropriate boundary conditions and the scale range
$\left(\alpha, \beta,\gamma\right)$. It also requires a careful
calibration of the model by an intensive numerical testing of the
chosen set of limit equations. Further
adjustment by experimental measurements and parameter
identification procedure may need to be done to make the model
quantitatively.

It is worth noting that, using
two-scale convergence, we could not pass to the limit
$\eps\rightarrow 0$ for all choices of the parameter ranges. However, in these cases
formal two-scale asymptotic expansions can be applied in order to pass formally to
the limit $\eps\rightarrow 0$ using the
transformation $u^\pm := \exp\left(\mp\Phi\right)c^\pm$ which arises
especially when treating drift diffusion problems
(compare, e.g. \cite{Roubicek,Markovich}). An alternative is to treat a linearized system as has been considered via
rigorous homogenization in the stationary case in \cite{Allaire10}.


\section*{Acknowledgments}
N.\,R. has been funded by the Deutsche Telekom Foundation. A.\,M. has been partially supported by the Initial Training Network FIRST (Fronts and Interfaces in Science and Technology) of the European Commission under grant nr. 238702.


\end{document}